\newtheorem{theorem}{Theorem}[section]
\newtheorem{lemma}[theorem]{Lemma}
\newtheorem{proposition}[theorem]{Proposition}
\newtheorem{example}[theorem]{Example}
\newtheorem{remark}[theorem]{Remark}
\newtheorem{remarks}[theorem]{Remarks}
\newcounter{claimcounter} 
\newtheorem{maintheorem}{Theorem}
\newcommand{\mylabel}[2]{#2\def\@currentlabel{#2}\label{#1}}
\newcommand{\Gal}{{\rm Gal}}
\newcommand{\Aut}{\mbox{\rm Aut}}
\newcommand{\Inn}{\mbox{\rm Inn}}
\newcommand{\Imagen}{\mbox{\rm Im }}
\newcommand{\INV}{\mbox{\rm MCINV}}
\newcommand{\N}{{\mathbb N}}
\newcommand{\Z}{{\mathbb Z}}
\newcommand{\Q}{{\mathbb Q}}
\newcommand{\C}{{\mathbb C}}
\newcommand{\GEN}[1]{\left\langle #1 \right\rangle}
\newcommand{\U}{\mathcal{U}}
\newcommand{\Ese}[2]{\mathcal{S}\left(#1\mid #2\right)}
\DeclareMathOperator{\lcm}{lcm}
\newcommand{\qand}{\quad \text{and} \quad}
\DeclareMathOperator{\Core}{Core}
\DeclareMathOperator{\Deg}{Deg}
\DeclareMathOperator{\Res}{Res}
\title{The isomorphism problem for rational group algebras of finite metacyclic groups}
\author{Àngel García-Blázquez and Ángel del Río}
\thanks{Partially supported Grant PID2020-113206GB-I00 funded by MICIU/AEI/10.13039/501100011033 and by Grant 22004/PI/22 of Fundación Séneca de la Región de Murcia.}
\address{Departamento de Matem\'{a}ticas, Universidad de Murcia, 30100, Murcia, Spain} \email{angel.garcia11@um.es, adelrio@um.es}
\subjclass{16S34, 20C05}
\begin{document}

\begin{abstract}
We prove that if two finite metacyclic groups have isomorphic rational group algebras, then they are isomorphic. This contributes to understand where the line separating positive and negative solutions to the Isomorphism Problem for group algebras lies.
\end{abstract}

\maketitle

\section{Introduction}

The Isomorphism Problem for group algebras over a commutative ring $R$ asks whether the isomorphism type of $G$ is determined by the isomorphism type of the group algebra $RG$.
More precisely:

\begin{quote}
\textbf{Isomorphism Problem for group algebras}: Let $R$ be a commutative ring and let $G$ and $H$ be groups. If $RG$ and $RH$ are isomorphic as $R$-algebras, are $G$ and $H$ isomorphic groups?
\end{quote}

This problem has been studied extensively, with special emphasis on the cases where the groups are finite and the coefficient ring is either the integers or a field. One of the first results is due to G. Higman who proved that if $G$ and $H$ are finite abelian groups and $\Z G\cong \Z H$, then $G\cong H$ \cite{Higman1940Thesis,Higman1940Paper}.
The same result with the integers replaced by the rationals was proved by S. Perlis and G. L. Walker \cite{PerlisWalker1950}. However, if $G$ and $H$ are finite abelian groups of the same order, then $\C G\cong \C H$.
This illustrates a general principle for the Isomorphism Problem: ``the smaller the coefficient ring the greater the chances for positive answer''. This is a consequence of the fact that if $S$ is an $R$-algebra, then $SG\cong S\otimes_R RG$. Hence, if $RG\cong RH$, then $SG\cong SH$.

So, $\Z$ is the ring with greatest chances to get positive answers to the Isomorphism Problem.
A. Whitcomb obtained a positive solution for integral group rings of finite metabelian groups \cite{Whitcomb}. This was extended by W. Roggenkamp and L. Scott to finite abelian-by-nilpotent groups \cite{RoggenkampScott1987}, and to nilpotent-by-abelian groups by W. Kimmerle (see \cite[Section~XII]{RoggenkampTaylor}).
However, M. Hertweck found two non-isomorphic finite solvable groups with isomorphic integral group rings \cite{Hertweck2001}.
See \cite[Section~3]{MargolisdelRioSurvey} for an overview on the Isomorphism Problem for integral group rings.

In an influential survey paper on representations of finite groups, Brauer posed the following variants of the Isomorphism Problem for group algebras over fields: ``When two non-isomorphic groups have isomorphic group algebras'', ``If two groups $G_1$ and $G_2$ have isomorphic group algebras over every ground field $\Omega$, are $G_1$ and $G_2$ isomorphic?''.
A negative answer to the latter was given by E. Dade, in the form of two non-isomorphic finite metabelian groups $G$ and $H$ with $FG\cong FH$ for every field $F$ \cite{Dade71}.
This contrasts with Whitcomb Theorem.
For more information on the Isomorphism Problem see \cite[Chapter~14]{Passman1977}, \cite[Chapter~III]{Sehgal1978} and the recent survey \cite{Margolis2022}.

The aim of this paper is contributing to draw the line between positive and negative solutions for the Isomorphism Problem for rational group algebras of finite groups.
Perlis-Walker Theorem shows that abelian groups are on the positive side, while Dade example, shows that some metabelian groups are on the negative side. Actually, it is easy to find examples of non-isomorphic metabelian groups with isomorphic rational group algebras. For example, if $p$ is an odd prime,
\begin{equation}\label{Counterexample}
G=\GEN{a,b \mid a^{p^2}=b^p=1, a^b=a^{1+p}} \hspace{.3cm} \text{and} \hspace{.3cm}
H=\GEN{a,b,c \mid a^p=b^p=c^p=[a,b]=[a,c]=1, [b,c]=a},
\end{equation}
then $G$ and $H$ are metabelian and $\Q G\cong \Q H$ (see \Cref{ExCounterexample}).
The aim of this paper is to show that metacyclic groups lie on the positive side. More precisely, we prove the following theorem:

\begin{maintheorem}\label{Main}
Let $G$ and $H$ be finite metacyclic groups. If $\Q G\cong \Q H$, then $G\cong H$.
\end{maintheorem}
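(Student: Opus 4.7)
The plan is to reduce the problem to recovering a standard presentation of $G$ from the Wedderburn decomposition of $\Q G$. The abelian case follows directly from the Perlis--Walker theorem, so I assume that $G$ and $H$ are non-abelian metacyclic. From $\Q G$ one can already extract $|G|$ (the dimension), the abelianization $G/G'$ (by Perlis--Walker applied to the sum of the one-dimensional rational components), and hence $|G'|$; these form the first layer of invariants.

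Next I would fix a canonical parametrization of finite metacyclic groups along the lines of Hempel, Basmaji or Sim, under which each finite metacyclic group corresponds to an equivalence class of quadruples $(m,n,r,t)$ arising from a presentation
$$\GEN{a,b \mid a^m=1,\ b^n=a^t,\ b^{-1}ab=a^r}$$
satisfying the usual arithmetic conditions $\gcd(r-1,m)\mid t$, $\mathrm{ord}_m(r)\mid n$, and $m\mid t(r-1)$. The isomorphism problem for metacyclic groups then becomes the problem of recovering this equivalence class from the algebra structure of $\Q G$.

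Since finite metacyclic groups are supersolvable, and in particular strongly monomial in the sense of Olivieri--del Río--Simón, every primitive central idempotent of $\Q G$ comes from a strong Shoda pair $(A,B)$ with $A/B$ cyclic, and the associated simple component is a matrix algebra over an explicit cyclic cyclotomic algebra. Computing this decomposition for the canonical presentation above yields an explicit list of matrix sizes, centers and Schur indices in terms of $(m,n,r,t)$. The core of the proof is to show that enough of these data can be read off from $\Q G$ alone to determine $(m,n,r,t)$ up to the equivalence defining isomorphic groups. The exponent of $G$ can be read off from the largest cyclotomic field occurring as a center; $G/G'$ is already known; and the conjugation parameter $r$ and twist $t$ are encoded in the non-commutative components, most sensitively in those arising from faithful characters of $G/G''$.

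The main obstacle is the fine arithmetic of the Schur indices, especially at the prime $2$, where quaternion-like algebras can arise and distinguish otherwise numerically similar groups; I expect a separate, technical analysis of the metacyclic $2$-group case to be the bulk of the work. A secondary difficulty is that no individual invariant pins down the quadruple $(m,n,r,t)$, so one must combine several invariants and verify that the remaining ambiguity matches precisely the equivalence relation between presentations of isomorphic metacyclic groups in the chosen classification.
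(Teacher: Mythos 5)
Your proposal identifies the correct high-level strategy, which is also what the paper does: use the strong Shoda pair machinery of Olivieri--del R\'io--Sim\'on to compute the Wedderburn decomposition of $\Q G$ for metacyclic (hence metabelian) $G$, and then argue that the resulting data pin down a classifying set of parameters. It also correctly anticipates that the bulk of the difficulty lies in the fine arithmetic and in the $2$-primary behavior. However, as written the proposal has two genuine gaps.

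First, the crucial step --- showing that specific features of the Wedderburn decomposition actually determine the classification parameters --- is only asserted, not argued. Statements like ``the conjugation parameter $r$ and twist $t$ are encoded in the non-commutative components'' and ``one must combine several invariants and verify that the remaining ambiguity matches precisely the equivalence relation'' are exactly the theorem one needs to prove, not an outline of how. The paper devotes four sections to this, and the methods are more delicate than reading off a single invariant: it proves a nilpotent reduction (so Sylow $p$-subgroups for $p\in\pi_G$ are already determined), then recovers $T_G(G'_{\pi'})$ via the component of maximal degree whose center embeds in $\Q_{m_{\pi'}}$ with only $\pm 1$ as roots of unity, then recovers $s^G$ and $\epsilon^G$ by counting Wedderburn components satisfying carefully chosen degree and center constraints, then $m^G,n^G,r^G$ by another counting argument, and finally $\Delta^G$ by a Galois-theoretic comparison. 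Schur indices, which you lean on, play essentially no role; the real work is done by counting components and by Galois descent. Also, ``the exponent of $G$ can be read off from the largest cyclotomic field occurring as a center'' is not quite right (the centers need not be cyclotomic) and in any case exponent plus abelianization plus order are far from enough.

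Second, your choice of the Hempel/Basmaji/Sim parametrization by quadruples $(m,n,r,t)$ is a liability: the equivalence relation identifying isomorphic groups in that coordinate system is complicated, so even if you recovered a quadruple from $\Q G$ you would still have to show the recovery is well-defined up to that equivalence. The paper sidesteps this entirely by using the classification of \cite{GarciadelRioClasification}, which assigns to each finite metacyclic group a \emph{unique} invariant tuple $\INV(G)=(m^G,n^G,s^G,\Delta^G)$; then ``$\INV(G)$ is determined by $\Q G$'' is a clean, unambiguous target. Replacing your parametrization by one with unique invariants is not cosmetic --- it is what makes the step-by-step determination of individual invariants a viable proof strategy rather than a loosely specified program.
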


Observe that the example before the theorem shows that it is not sufficient to assume that one of the groups is metacyclic.
A first step to the proof of \Cref{Main} was obtained in \cite{GarciadelRioNilpotent} where we proved the theorem in the particular case where the groups are nilpotent. This will be an important tool in our proof of \Cref{Main}.

We briefly explain the structure of the paper.
In \Cref{SectionNotation} we introduce the basic notation and the main tools used in our proof of \Cref{Main}.
This includes a recent classification of finite metacyclic groups in terms of group invariants \cite{GarciadelRioClasification}, alternative to Hempel's classification \cite{Hempel2000}.
By this classification, the isomorphism type of a finite metacyclic group is uniquely determined by a four tuple $\INV(G)=(m,n,s,\Delta)$, where $m,n$ and $s$ are integers and $\Delta$ is a cyclic subgroup of the group of units of $\Z/m'\Z$ for some divisor $m'$ of $m$ (see \Cref{SSecMetacyclic} for details). The goal is proving that $\INV(G)$ is determined by the isomorphism type of the group algebra $\Q G$. In \Cref{SectionSketch} we explain a program to achieve this goal and, in the subsequent sections we complete it step-by-step. After proving in \Cref{SectionRDetermined} that the isomorphism type of $\Q G$ determines some group invariants of $G$, not explicitly included in $\INV(G)$, in \Cref{SectionEpsilonDetermined} we prove that it determines $s$.
In \Cref{SectionmnrDetermined}, it is proved that the isomorphism type of $\Q G$ also determines $m$ and $n$. Lastly, we prove that it determines $\Delta$ in \Cref{SectionDeltaDetermined}. This will complete the proof of \Cref{Main}.

We thank the anonymous referee for a careful reading and many suggestions, which have improved the paper considerably.

\section{Notation and preliminaries}\label{SectionNotation}

In this section we introduce the basic notation, prove some technical results and review some tools and lemmas needed for the remainder of the paper. We divide the section in the following subsections: Number theory, Group theory, The finite metacyclic groups and Wedderburn decomposition of rational group algebras.

\subsection{Number theory}\label{SectionNumberTheory}

We use the following notation for $m,q,p,t\in \Z$ with $m,q,p>0$, $q\mid m$, $p$ prime, $\gcd(t,m)=1$, $\pi$ a set of prime integers, and $A$ a finite set:
\begin{center}
\begin{tabular}{rl}
$\pi(m)=$& the set of prime integers dividing $m$,\\
$m_p=$ & greatest power of $p$ dividing $m$, \\
$m_\pi=$ & $\prod_{p\in \pi} m_p$, \\
$v_p(m)=$&$\log_p(m)$, \\
$\U_m=$ & group of units of $\Z/m\Z$,\\
$[t]_m=$& the element of $\U_m$ represented by $t$, \\
$o_m(t)=$& $|[t]_m|$, the multiplicative order or $t$ module, $m$,\\
$\GEN{t}_m=$&$\GEN{[t_m]}$, the subgroup of $\U_m$ generated by $[t]_m$,\\
$\Res_q:\U_m\to \U_q$, & the map $[t]_m\mapsto [t]_q$,\\
$\zeta_m$,& a complex primitive root of unity,\\
$\Q_m=$& $\Q(\zeta_n)$, \\
$|A|=$ & cardinality of $A$, \\
$\pi(A)=$ & $\pi(|A|)$.
\end{tabular}
\end{center}

Suppose that $n$ is a power of a prime $p$ and let $\Gamma$ be a cyclic $p$-subgroup of $\U_n$. If $p$ is odd, then $\Gamma=\GEN{1+r}_n$ for some $p\mid r\mid n$ and this is the only subgroup of $\U_n$ with order $\frac{n}{r}$.
Suppose that $p=2$. If $n\mid 2$, then $\U_n=1$. If $4\mid n$, then $\U_n$ has $2(\log_2(n)-1)$ cyclic subgroups, namely the following ones
    $$\GEN{1+r}_n \qand \GEN{-1+r}_n, \text{ with } 4\mid r \mid n.$$

If $x$ and $n$ are integers with $x\ne 0$ and $n>0$, then we denote:
$$\Ese{x}{n}=\sum_{i=0}^{n-1} x^i =
\begin{cases} n, & \text{ if } x=1; \\
\frac{x^n-1}{x-1}, & \text{otherwise}.
\end{cases}$$

The notation $\Ese{x}{n}$ occurs in the following statement:
\begin{equation}\label{Potencia}
\text{If } g^h=g^x \text{ with } g \text{ and } h
\text{ elements of a group, then } (hg)^n = h^n g^{\Ese{x}{n}}.
\end{equation}

The following lemma collects some properties of the operator $\Ese{-}{-}$ (see  \cite[Lemma~2.1]{GarciadelRioClasification}).

\begin{lemma}\label{PropEse}
Let $p$ be a prime integer, let $R$ and $m$ be integers with $m>0$ and $R>1$ and $v_p(R-1)\ge 1$.
Then
\begin{enumerate}
\item \label{vpRm-1}
$v_p(R^m-1)=\begin{cases}
v_p(R-1)+v_p(m), & \text{if } p\ne 2 \text{ or } v_p(R-1)\ge 2; \\
v_p(R+1)+v_p(m), & \text{if } p=2, v_p(R-1)=1 \text{ and } 2\mid m; \\
1, & \text{otherwise}.
\end{cases}$

\item \label{vpEse}
$v_p(\Ese{R}{m})=\begin{cases}
v_p(m), & \text{if } p\ne 2 \text{ or } v_p(R-1)\ge 2; \\
v_p(R+1)+v_p(m)-1, & \text{if } p=2, v_p(R-1)=1 \text{ and } 2\mid m; \\
0, & \text{otherwise}.
\end{cases}$

\item \label{op}
$o_{p^m}(R)=\begin{cases}
p^{\max(0,m-v_p(R-1))}, & \text{if } p\ne 2 \text{ or } v_p(R-1)\ge 2; \\
1, & \text{if } p=2, v_p(R-1)=1 \text{ and } m\le 1; \\
2^{\max(1,m-v_2(R+1))}, & \text{otherwise}.
\end{cases}$

\end{enumerate}
\end{lemma}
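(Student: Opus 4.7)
My approach is to prove (1) first, since (2) and (3) follow formally from it. Statement (2) is immediate: because $R > 1$ we have $\Ese{R}{m} = (R^m - 1)/(R - 1)$, so $v_p(\Ese{R}{m}) = v_p(R^m - 1) - v_p(R - 1)$, and substituting the three cases of (1) yields the three cases of (2) after a short check. Statement (3) is read off from (1) by noting that $o_{p^m}(R) = \min\{k \geq 1 : v_p(R^k - 1) \geq m\}$, and solving the resulting inequality case by case; the only subtlety is that in the regime $p = 2$, $v_2(R-1) = 1$, any $k$ with $v_2(R^k - 1) \geq 2$ must be even, which forces the minimum to be at least $2$ and reduces the computation to applying the formula to $R^2$.

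For (1), I reduce first to the case $m = p^a$. Writing $m = p^a t$ with $\gcd(t, p) = 1$, the factorization
\[
R^m - 1 = (R^{p^a} - 1) \cdot \Ese{R^{p^a}}{t}
\]
combined with $R^{p^a} \equiv 1 \pmod{p}$ shows that the second factor is $\equiv t \pmod{p}$, hence a unit mod $p$. Therefore $v_p(R^m - 1) = v_p(R^{p^a} - 1)$, which matches the claimed formula because $v_p(m) = a$.

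It then remains to compute $v_p(R^{p^a} - 1)$ by induction on $a$ via
\[
R^{p^{a+1}} - 1 = (R^{p^a} - 1) \cdot \Ese{R^{p^a}}{p},
\]
so everything hinges on showing $v_p(\Ese{R^{p^a}}{p}) = 1$ in the main regime. Writing $R^{p^a} = 1 + p^b u$ with $p \nmid u$ and expanding $\Ese{R^{p^a}}{p} = \sum_{i=0}^{p-1}(1 + p^b u)^i$ modulo a suitable power of $p$ yields $\Ese{R^{p^a}}{p} \equiv p \pmod{p^{b+1}}$ whenever either $p$ is odd or $b \geq 2$ (the binomial tail collects an extra factor of $p$ from $\binom{p}{2}$ in the odd case, or from the size of $p^{2b}$ in the $p = 2$, $b \geq 2$ case). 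This delivers the first case of the statement inductively.

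The residual case $p = 2$ with $v_2(R-1) = 1$ is the main obstacle, and needs a separate opening move: for $m$ odd, $\Ese{R}{m}$ is a sum of $m$ odd terms, hence odd, so $v_2(R^m - 1) = 1$, giving the third case of the formula; for $m$ even, I write $m = 2m'$ and apply the already-handled case to $R^2$, using $v_2(R^2 - 1) = v_2(R - 1) + v_2(R + 1) = 1 + v_2(R + 1) \geq 3$ to obtain $v_2(R^m - 1) = 1 + v_2(R+1) + v_2(m') = v_2(R+1) + v_2(m)$, which is the second case. Apart from this parity exception, the argument is a standard lifting-the-exponent computation.
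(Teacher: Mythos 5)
The paper does not actually prove this lemma; it is quoted from \cite[Lemma~2.1]{GarciadelRioClasification}, so there is no in-paper argument to compare against. Your proof is correct and is the standard lifting-the-exponent argument: reduce to $m$ a power of $p$ via the factorization $R^{p^a t}-1=(R^{p^a}-1)\,\Ese{R^{p^a}}{t}$ with the second factor a unit mod $p$, then induct on the exponent using $v_p(\Ese{S}{p})=1$ when $S\equiv 1\pmod{p}$ (and $S\equiv 1\pmod 4$ if $p=2$), and handle the exceptional $p=2$, $v_2(R-1)=1$ case by first passing to $R^2$. The formal deductions of (2) and (3) from (1), including the parity observation that in the exceptional regime the order in (3) must be even, are also correct.
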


\subsection{Group theory}\label{SubsectionGroupTheory}

By default all the groups in this paper are finite.
In this section $G$ is a group and $g,h\in G$.
We use the standard group theoretical notation:
$Z(G)=$ center of $G$, $G'=$ commutator subgroup of $G$, $\Aut(G)=$ group of automorphisms of $G$, $|g|=$ order of $g$, $g^h=h^{-1}gh$, $[g,h]=g^{-1}g^h$,  $H\le G$ means ``$H$ is a subgroup of $G$'' and  $N\unlhd G$ means ``$N$ is a normal subgroup of $G$''.
If $H\le G$, then $[G:H]$ denotes the index of $H$ in $G$,
$N_G(H)$ the normalizer of $H$ in $G$, $C_G(H)$ the centralizer of $H$ in $G$ and $\Core_G(H)$ the core of $H$ in $G$, i.e. the greatest subgroup of $H$ that is normal in $G$.

If $\pi$ is a set of primes, then $g_{\pi}$ and $g_{\pi'}$ denote the $\pi$-part and $\pi'$-part of $g$, respectively.
We often use $G_{\pi}$ (respectively, $G_{\pi'}$) to denote some Hall $\pi$-subgroup (respectively, Hall $\pi'$-subgroup) of $G$.
When $p$ is a prime we rather write $G_p$ and $g_p$ than  $G_{\{p\}}$ and
$g_{\{p\}}$.

We denote
\begin{equation}\label{piDef}
\pi_G=\{p\in \pi(G) : G \text{ has a normal } p\text{-complement} \} \qand
\pi'_G=\pi(G)\setminus \pi_G.
\end{equation}
Observe that if $p\in \pi_G$ if and only if $G$ has a unique Hall $p'$-subgroup $G_{p'}$, and hence $G_{\pi'_G}=\cap_{p\in \pi_G} G_{p'}$ is the unique Hall $\pi'_G$-subgroup of $G$.

Let $m$ be a positive integer and let $A$ be a cyclic group of order $m$. Then we have isomorphisms
	$$\Aut(A)\stackrel{\sigma^A}{\leftarrow} \U_m\rightarrow \Aut(\Q_m)$$
which associate $[t]_m\in \U_m$ with the automorphism of $A$ given by $a\mapsto a^t$ and the automorphism of $\Q_m$ given by $\zeta_m\mapsto \zeta_m^t$.
Sometimes we abuse the notation and identify elements of $\Aut(A)$, $\U_m$ and $\Aut(\Q_m)$ via these isomorphisms. For example, if $X$ is a subset (respectively, an element $x$) of $\Aut(A)$ or $\U_m$, then $\Q_m^X$ (respectively, $\Q_m^x$) denotes the subfield of $\Q_m$ formed by the elements fixed by the images of the elements $X$ (respectively, of $x$) in $\Aut(\Q_m)$.

If $A$ is a cyclic normal subgroup of a group $G$ and $m=|A|$, then we define
\begin{eqnarray*}
T_G(A)&=&(\sigma^A)^{-1}(\Inn_G(A)), \\
r^G(A)&=& \text{greatest divisor } r \text{ of } m \text{ such that } \Res_{r_{2'}}(T_G(A))=1 \text{ and }\Res_{r_2}(T_G(A))\subseteq \GEN{-1}_{r_2}; \\
\epsilon^G(A)&=&
\begin{cases}
-1, & \text{if } \Res_{r^G(A)_2}(T_G(A))\ne 1; \\
1, & \text{otherwise};
\end{cases} \\
k^G(A)&=&|\Res_{m_{\nu}}(T_G(A))|, \text{ with } \nu = \pi(m)\setminus \pi(r^G(A)).
\end{eqnarray*}
where $\Inn_G(A)$ denotes the restriction to $A$ of the inner automorphisms of $G$.
Observe that if $\nu=\emptyset$, then $m_{\nu}=1$.
Moreover, $\epsilon^G(A)=-1$ if and only if $A$ has an element of order $4$ which is conjugate in $G$ to its inverse. Indeed, if denote $r=r^G(A)$, then $\epsilon^G(A)=-1$ if and only if $\Res_{r_2}(T_G(A))=\GEN{-1}_{r_2}\ne 1$ and this happen if and only if $A$ has an element $a$ of order $r_2$ such that $a$ and $a^{-1}$ are different and conjugate in $G$. It is now clear that the latter holds if and only if $A$ has an element of order $4$ conjugate in $G$ to its inverse.

If $G$ is a group, then a cocyclic subgroup of $G$ is a normal subgroup $N$ of $G$ such that $G/N$ is cyclic . We close this subsection computing the cocyclic subgroups of a $2$-generated abelian $p$-group. This will be used in \Cref{SectionmnrDetermined}.

\begin{lemma}\label{Cocyclic}
Let $L=\GEN{g}\times \GEN{h}$ be a an abelian $p$-group with $|g|\ge |h|$ and let
$$C_L = \left\{(i,y,x)\in \Z^3 : i \in \{1,2\},\quad  1\le x\le y \qand \begin{cases} y\mid |h|, & \text{if } i=1; \\
y\mid |g|, p\mid x \text{ and } p\mid y\mid |h|x, & \text{if } i=2 \end{cases} \right\}$$
Then
$$(i,y,x)\mapsto K_{i,y,x}=\begin{cases} \GEN{gh^x,h^y}, & \text{if } i=1; \\
\GEN{g^xh,g^y}, & \text{if } i=2;\end{cases}$$
defines a bijection from $C_L$ to the set of cocyclic subgroups of $L$.
Moreover, for every $(i,y,x)\in C_L$ we have
$$[L:K_{i,y,x}]=y,\quad
\GEN{g}\cap K_{i,y,x}=\begin{cases} \GEN{g^{\frac{y}{x_p}}}, & \text{if } i=1; \\
\GEN{g^y}; & \text{if } i=2; \end{cases}
\qand
\GEN{h}\cap K_{i,y,x}=\begin{cases} \GEN{h^y}, & \text{if } i=1; \\
\GEN{h^{\frac{y}{x_p}}}; & \text{if } i=2. \end{cases}$$
\end{lemma}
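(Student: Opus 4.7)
The plan is to associate, to each $(i,y,x)\in C_L$, an explicit surjective homomorphism $\phi_{i,y,x}\colon L\to\Z/y\Z$ with kernel $K_{i,y,x}$, and then to classify all cocyclic subgroups by reading off the parameters from the quotient. This will simultaneously yield the index formula, the intersection formulas, and the bijectivity.

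First I would verify the forward direction. For $(1,y,x)\in C_L$, define $\phi_{1,y,x}$ by $g\mapsto -x$ and $h\mapsto 1$; this is well defined since $y\mid|h|\mid|g|$. For $(2,y,x)\in C_L$, define $\phi_{2,y,x}$ by $g\mapsto 1$ and $h\mapsto -x$; here the hypotheses $y\mid|g|$ and $y\mid|h|x$ are precisely what is needed. Both maps are surjective (since $1$ lies in the image), the generators of $K_{i,y,x}$ map to $0$, so $K_{i,y,x}\subseteq\ker\phi_{i,y,x}$. Conversely, writing a general element of $L$ as $g^jh^k$ and using $L=\GEN{g}\times\GEN{h}$, one checks directly that every element of the kernel already lies in $K_{i,y,x}$. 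Hence $[L:K_{i,y,x}]=y$, and the intersections $\GEN{g}\cap K_{i,y,x}$ and $\GEN{h}\cap K_{i,y,x}$ follow by solving $\phi_{i,y,x}(g^j)=0$ and $\phi_{i,y,x}(h^j)=0$; for instance, in case $1$ one needs $y\mid xj$, and since $1\le x\le y$ forces $v_p(x)\le v_p(y)$, one has $\gcd(x,y)=x_p$, giving $\GEN{g}\cap K_{1,y,x}=\GEN{g^{y/x_p}}$.

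Next, for injectivity, the index $y=[L:K_{i,y,x}]$ is read off $K$ directly, and $i$ is distinguished by whether the image $\bar h$ of $h$ generates $L/K$ (case $1$) or not (case $2$); $x\in\{1,\dots,y\}$ is then uniquely determined by $\bar g=\bar h^{-x}$ or $\bar h=\bar g^{-x}$. For surjectivity, let $K\unlhd L$ be cocyclic with $y=[L:K]$. Since $L/K$ is a cyclic $p$-group its subgroup lattice is a chain, so at least one of $\bar g,\bar h$ generates $L/K$. If $\bar h$ does, pick $x\in\{1,\dots,y\}$ with $\bar g=\bar h^{-x}$; then $y\mid|h|$ and the quotient map $L\to L/K$ coincides with $\phi_{1,y,x}$, so $K=K_{1,y,x}$. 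Otherwise $\bar g$ generates but $\bar h$ does not; take $x$ with $\bar h=\bar g^{-x}$. Then $|\bar h|=y/\gcd(x,y)<y$ forces $p\mid\gcd(x,y)$, hence $p\mid x$ and $p\mid y$; and $|\bar h|\mid|h|$ rearranges to $y\mid|h|\gcd(x,y)$, equivalently $y\mid|h|x$. Thus $(2,y,x)\in C_L$ and $K=K_{2,y,x}$.

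The hard part will be the careful bookkeeping in case $2$: verifying precisely how the three constraints $p\mid x$, $p\mid y$ and $y\mid|h|x$ encode respectively that $\bar h$ fails to generate $L/K$, that $y>1$, and that the order of $\bar h$ is a genuine divisor of $|h|$; together with the case split these conditions rule out any double-counting between the families $K_{1,\cdot,\cdot}$ and $K_{2,\cdot,\cdot}$.
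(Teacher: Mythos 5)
Your proof is correct and follows essentially the same case analysis as the paper's: your dichotomy according to whether $\bar h$ generates $L/K$ is equivalent to the paper's dichotomy according to whether $K\subseteq\GEN{g^p,h}$. Packaging the verification via an explicit surjection $\phi_{i,y,x}\colon L\to\Z/y\Z$ makes the kernel, index, intersection, and injectivity checks somewhat slicker than the paper's direct element computations, but the underlying classification scheme and the parameter extraction are the same.
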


\begin{proof}
Clearly, if $(i,y,x)\in C_L$, then $K_{i,y,x}$ is a cocyclic subgroup of $L$.

Let $K$ be a cocyclic subgroup of $L$. Suppose that $K\not\subseteq\GEN{g^p,h}$. Then $K$ contains $\GEN{gh^z}$ for some integer $z$. Moreover, $L=\GEN{gh^z}\times \GEN{h}$.
Therefore $K=\GEN{gh^z,h^y}=\GEN{gh^x,h^y}=K_{1,y,x}$, with $y = [L:K]\mid |h|$ and $x$ the unique integer in the interval $[1,y]$ such that $x\equiv z \mod y$.
Moreover, $K_{1,y,x}=\GEN{gh^x}\times \GEN{h^{y}}$ and $K_{1,y,x}\cap \GEN{h}=\GEN{h^{y}}$.
Let $u$ be a positive integer. Then $g^u\in K_{1,y,x}$ if and only if there are integers $a$ and $b$ such that $g^u = g^ah^{ax+by}$. In that case, $u\equiv a \mod |g|$. As $y\mid |h|$ and $|h| \mid |g|$, it follows that $g^u\in K_{1,y,x}$ if and only if there is an integer $b$ such that $|h| \mid xu+by$ if and only if $y \mid xu$ if and only if $\frac{y}{\gcd(x,y)}\mid u$ if and only if $\frac{y}{x_p}\mid u$.
This shows that $\GEN{g}\cap K_{1,y,x}= \GEN{g^{\frac{y}{x_p}}}$.

Suppose otherwise that $K\subseteq \GEN{g^p,h}$. Then $K\cap \GEN{g}=\GEN{g^y}$ for some $y$ with $p\mid y$ and $y \mid |g|$, and $K$ contains $\GEN{g^xh}$ for some integer $x$ with $p\le x \le y$ and $p\mid x$.
Then $L=\GEN{g^xh,g}$ and $|g\GEN{g^xh,g^y}|=y$ and hence $K=\GEN{g^xh,g^y,g^\delta}$ for some $\delta\mid y$. However, as $K\cap \GEN{g}=\GEN{g^y}$ it follows that $K=\GEN{g^xh,g^y}$.
As $g^{x|h|}\in \GEN{g}\cap K=\GEN{g^y}$, $y\mid x|h|$. Thus $(2,y,x)\in C_L$ and $K=K_{2,y,x}$.
Furthermore, $[L:K]=[\GEN{g}:K\cap \GEN{g}]=[\GEN{g},\GEN{g^y}]=y$, since  $L=\GEN{K,g}$. Conversely, suppose that $(2,y,x)\in C_L$
Let $u$ be a positive integer. Then $g^u\in K_{2,y,x}$ if and only if $g^u=g^{ax+by}h^a$ for some integers $a$ and $b$. In that case $|h|\mid a$ and hence $y\mid ax$ because $y\mid x|h|$. Moreover $u\equiv ax+by \mod |g|$ and as $y\mid |g|$ we have that $y\mid u$.
Therefore $\GEN{g}\cap K_{2,y,x}= \GEN{g^{y}}$.
Finally, $h^u\in K_{2,y,x}$ if and only if there are integers $a$ and $b$ with $h^u=g^{ax+by}h^{a}$. Then $a=u+c|h|$ for some integer $c$ and $ux+xc|h|+ by \equiv 0 \mod |g|$. As $y\mid |g|$ and $y\mid x|h|$ we deduce that $y \mid ux$. Thus $\GEN{h}\cap K_{2,y,x}\subseteq \GEN{h^{\frac{y}{x_p}}}$ and as $h^{\frac{y}{x_p}}=(g^xh)^{\frac{y}{x_p}}(g^y)^{-\frac{x}{x_p}}\in K_{2,y,x}$, we conclude that $\GEN{h}\cap K_{2,y,x}=\GEN{h^{\frac{y}{x_p}}}$.

Let $(i_1,y_1,x_1),(i_2,y_2,x_2)\in C_L$ with $K_{i_1,y_1,x_1}=K_{i_2,y_2,x_2}$. It remains to prove that $(i_1,y_1,x_1)=(i_2,y_2,x_2)$.
First of all $i_1=i_2$, as $K_{1,y_1,x_1}\not\subseteq \GEN{g^p,h}$ and $K_{2,y_2,x_2}\subseteq \GEN{g^p,h}$.
Suppose that $i_1=i_2=1$.
Then $\GEN{h^{y_1}}=K_{1,y_1,x_1}\cap \GEN{h} = K_{1,y_2,x_2}\cap \GEN{h}=\GEN{h^{y_2}}$ and therefore $y_1=y_2$.
Moreover, $gh^{x_1}=(gh^{x_2})^a(h^{y_2})^b=g^a h^{ax_2+by_2}$ for some integers $a$ and $b$. Then $a\equiv 1 \mod |g|$ and, as $|h|\mid |g|$ and $y_2\mid |h|$, we have that $x_1\equiv x_2 \mod y_1$. Then $x_1=x_2$, as $1\le x_1,x_2\le y_1=y_2$.
Suppose that $i_2=2$. Since $y_1, y_2\mid |g|$ and $\GEN{g^{y_1}}=K_{2,y_1,x_1}\cap \GEN{g}=K_{2,y_2,x_2}\cap \GEN{g}=\GEN{g^{y_2}}$, we have $y_1=y_2$.
Moreover,
$g^{x_1}h=(g^{x_2}h)^a(g^{y_2})^b=g^{ax_2+by_2}h^a$ for some integers $a$ and
$b$. Then $a=1 + c|h|$ for some integer $c$ and as $y_2\mid x_2|h|$ and $y_1\mid |g|$, it follows that $x_1\equiv ax_2=x_2+cx_2|h| \equiv x_2 \mod y_2$. Thus $x_1=x_2$.
\end{proof}

\subsection{The finite metacyclic groups}\label{SSecMetacyclic}

Let $G$ be a group.
A \emph{metacyclic factorization} of $G$ is an expression $G=AB$ where $A$ is a normal cyclic subgroup of $G$ and $B$ is a cyclic subgroup of $G$.
Clearly a group is metacyclic if and only if it has a metacyclic factorization.

In the remainder of this subsection $G$ is a finite metacyclic group.
A metacyclic factorization $G=AB$ is said to be \emph{minimal} in $G$ if, in the lexicographical order, $(|A|,r^G(A),[G:B])$ is minimal in
	$$\{(|C|,r^G(C),[G:D]) \mid G=CD, \text{ metacyclic factorization}\}.$$
In that case we denote
    $$m^G=|A|, \quad n^G=[G:A], \quad s^G=[G:B], \quad r^G=r^G(A), \quad \epsilon^G=\epsilon^G(A) \qand k^G=k^G(A).$$
By the results of \cite{GarciadelRioClasification} the above integers are independent of the minimal metacyclic factorization and $\pi'_G=\pi(m^G)\setminus \pi(r^G)$. Thus, they are invariants of $G$. Another invariant the plays an important role in this paper is the following:
$$R^G=T_G(G'_{\pi'_G}).$$

We need to define a last invariant. For that we set $m=m^G$, $n=n^G$, $s=s^G$, $r=r^G$, $\epsilon=\epsilon^G$, $k=k^G$, $\pi=\pi_G$ and $\pi'=\pi'_G$ and denote
    $$m'=m_{\pi'} \prod_{p\in \pi(r)} m'_p$$
with $m'_p$ defined as follows for $p\in \pi(r)$:
    \begin{equation}\label{m'}
    \begin{split}
    & \text{if }
    \epsilon=1 \text{ or } p\ne 2
    \text{, then }
    m'_p = \min\left(m_p,k_pr_p,\max\left(r_p,s_p,r_p\frac{s_p k_p}{n_p}\right)\right);\\
    & \text{otherwise }
    m'_2=\begin{cases} r_2, & \text{if } k_2\le 2  \text{ or } m_2\le 2r_2; \\
    \frac{m_2}{2}, & \text{if } 4\le k_2<n_2, 4r_2\le m_2,  \text{ and if } s_2\ne n_2r_2 \text{, then } 2s_2=m_2<n_2r_2; \\
    m_2, & \text{otherwise}.
    \end{cases}
    \end{split}
    \end{equation}
Then we denote
$$\Delta^G = \Res_{m'}(T_G(A)).$$
Again, by the results in \cite{GarciadelRioClasification}, $\Delta^G$ is independent of the minimal metacyclic factorization and following the notation in loc. cit. we denote
	$$\INV(G)=(m^G,n^G,s^G,\Delta^G).$$
We recall the main results of \cite{GarciadelRioClasification}:

\begin{theorem}\label{Invariants}
Two finite metacyclic groups $G$ and $H$ are isomorphic if and only if $\INV(G)=\INV(H)$.
\end{theorem}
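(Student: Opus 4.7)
The plan is to establish the two directions separately. The forward direction, $G\cong H \Rightarrow \INV(G) = \INV(H)$, reduces to showing well-definedness of $\INV(G)$: i.e., that $m^G, n^G, s^G, \Delta^G$ do not depend on the chosen minimal metacyclic factorization. The first three entries are literally defined by lexicographic minimization over all metacyclic factorizations of $G$ and are thus automatically isomorphism invariants. For $\Delta^G$ the situation is more delicate: distinct minimal factorizations $G=AB$ can produce distinct subgroups $T_G(A)\subseteq \U_m$, and one must verify, prime by prime, that any such ambiguity lies in the kernel of $\Res_{m'}$. The formula for $m'$ in \eqref{m'} is tailored precisely to achieve this, so the remaining task is to check that $\Res_{m'}(T_G(A))$ coincides across all minimal choices of $A$.

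For the converse, given $\INV(G)=\INV(H)=(m,n,s,\Delta)$, the plan is to reconstruct a canonical presentation $\GEN{a,b \mid a^m=1,\ b^n=a^s,\ a^b=a^t}$ from the invariants, and then to exhibit an isomorphism $G\cong H$ by matching generators. The key step is to recover $T_G(A)=\GEN{[t]_m}$ from $\Delta = \Res_{m'}(T_G(A))$. Since $T_G(A)$ is a cyclic subgroup of $\U_m$ whose image in $\U_{m'}$ is $\Delta$, and since the cyclic subgroups of $\U_{p^k}$ are fully described in \Cref{SectionNumberTheory} with their orders computed by \Cref{PropEse}, one can enumerate all candidate lifts of $\Delta$ to $\U_m$ and then show that any two lifts produce isomorphic groups. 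The auxiliary integers $r^G, \epsilon^G, k^G$, which are themselves recoverable from $(m,\Delta)$ by inverting the recipe for $m'$, provide the bookkeeping needed to pick out the correct equivalence class of lifts.

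The main obstacle is the analysis at the prime $2$, where $\U_{2^k}$ is non-cyclic for $k\ge 3$ and hence the $2$-local part of $T_G(A)$ is not determined by its order alone. This is captured by $\epsilon^G$ and is the source of the three-fold case split in the definition of $m'_2$ when $\epsilon=-1$. Each of these subcases corresponds to a genuinely different $2$-local structure of $G$, and each must be handled separately to confirm that $\Delta$ carries exactly the right amount of information --- no more, no less. Once the prime-local reconstruction is established at every $p$ and assembled via the Chinese Remainder Theorem, the global $t\in \U_m$ is recovered up to the equivalence induced by the choice of generator of $A$, and the desired isomorphism $G\cong H$ follows by transporting generators between the canonical presentations.
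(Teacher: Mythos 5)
This theorem is not proved in the present paper: it is explicitly imported as one of the main results of \cite{GarciadelRioClasification} (``We recall the main results of \cite{GarciadelRioClasification}:''), and both the companion Theorem \ref{Parameters} and the model-group construction in \eqref{Presentation} are likewise cited rather than derived. There is therefore no internal proof in this paper against which to compare your argument; the appropriate response here is simply to cite the reference rather than reprove it.

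Taken on its own terms, your outline identifies the correct shape of a classification argument but leaves its two decisive steps as unproved assertions, which is where essentially all the work of \cite{GarciadelRioClasification} lives. In the forward direction you are right that $m^G$, $n^G$, $s^G$ (and in fact $r^G$) drop out automatically from the lexicographic minimization of $(|A|, r^G(A), [G:B])$, but the independence of $\Delta^G=\Res_{m'}(T_G(A))$ (and of $\epsilon^G$, $k^G$) from the choice of minimal $A$ is precisely the hard content, and your proposal only says ``the remaining task is to check'' it. In the converse direction, the plan to ``enumerate all candidate lifts of $\Delta$ to $\U_m$ and show that any two lifts produce isomorphic groups'' is a restatement of the theorem, not a proof of it; moreover, the equivalence on lifts is not merely the one ``induced by the choice of generator of $A$''. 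One must also account for rechoosing $b$, and, more delicately, for rechoosing the normal cyclic subgroup $A$ itself, which is generally far from unique even among minimal factorizations --- this is why $T_G(A)$ is cut down to $\Res_{m'}(T_G(A))$ in the first place, and why the three-way case split in \eqref{m'} at $p=2$ with $\epsilon=-1$ is needed. Establishing that this equivalence is exactly the kernel of $\Res_{m'}$, simultaneously in the forward and converse directions, is the substance of \cite{GarciadelRioClasification}, and a sketch that defers it is not yet a proof.
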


\begin{theorem}\label{Parameters}
Let $m,n,s\in \N$ and let $\Delta$ be a cyclic subgroup of $\U_{m'}$ with $m'\mid m$.
Let
\begin{eqnarray*}
r&=& \text{greatest divisor of } m \text{ such that } \Res_{r_{2'}}(\Delta)=1 \text{ and }\Res_{r_2}(\Delta)\subseteq \GEN{-1}_{r_2}; \\
\epsilon&=&
\begin{cases}
-1, & \text{if } \Res_{r_2}(\Delta)\ne 1; \\
1, & \text{otherwise};
\end{cases} \\
k&=&|\Res_{m_{\pi'}}(\Delta)|, \text{ with } \pi' = \pi(m)\setminus \pi(r).
\end{eqnarray*}
Then the following conditions are equivalent:
\begin{enumerate}
 \item $(m,n,s,\Delta)=\INV(G)$ for some finite metacyclic group $G$.
 \item\label{Param} \begin{enumerate}
        \item\label{ParamB} $s$ divides $m$, $|\Delta|$ divides $n$ and $m_{\pi'}=s_{\pi'}=m'_{\pi'}$.
        \item\label{Param'} \eqref{m'} holds for every $p\in \pi(r)$.
        \item\label{Param-} If $\epsilon=-1$, then $\frac{m_2}{r_2}\le n_2$, $m_2\le 2s_2$ and $s_2\ne n_2r_2$. If moreover $4\mid n$, $8\mid m$ and $k_2<n_2$, then $r_2\le s_2$.
		\item\label{Param+} For every $p\in \pi(r)$ with $\epsilon^{p-1}=1$, we have
		$\frac{m_p}{r_p}\le s_p\le n_p$ and if $r_p> s_p$, then  $n_p<s_pk_p$;
	\end{enumerate}
\end{enumerate}
\end{theorem}

Moreover, if $(m,n,s,\Delta)$ satisfy the conditions of \Cref{Parameters}, then with the same notation for $m', r, \epsilon$ and $k$, one can construct a metacyclic group $G$ with $MCINV(G) = (m,n,s,\Delta)$ as follows:
Fix an automorphism $\gamma$ of a cyclic group $\GEN{a}$ of order $m$ satisfying the following conditions:
	$$\Res_{m'}(\GEN{\gamma})=\Delta \quad
	\text{ and } \Res_{m_p}(\GEN{\gamma})=\GEN{\epsilon^{p-1}+r_p}_{m_p} \text{ for every } p\in \pi(r),$$
where we are abusing the notation by identifying, $\Aut(\GEN{a^{m/d}})$ and $\U_d$ for $d\mid m$,  as explained in \Cref{SectionNumberTheory}.
Then the group given by the following presentation:
	\begin{equation}\label{Presentation}
	    G = \GEN{a,b \mid a^m = 1, b^n=a^s, a^b=\gamma(a)},
	\end{equation}
satisfies $MCINV(G)=(m,n,s,\Delta)$ and $G= \GEN{a}\GEN{b}$ is a minimal metacyclic factorization of $G$.

The following lemma is a direct consequence of \Cref{Parameters}.

\begin{lemma}\label{conditionsA-D}
Let $p$ be a prime integer and let $G$ be a metacyclic $p$-group.
If $\INV(G)=(p^\mu,p^\nu,p^\sigma,\GEN{e+p^\rho}_{p^\mu})$, then
 \begin{enumerate}[label=(\Alph*)]
 \item\label{rhomu1} If $\rho=0$, then $\mu=0$.
 \item\label{rhomu2} If $p=2$ and $\rho=1$, then $\mu=1$.
 \item\label{epsilon1} If $e=1$, then $\rho\le \sigma\le \mu \le \rho+\sigma$ and $\sigma\le\nu$.
 \item\label{epsilon-1} If $e=-1$, then
 \begin{enumerate}
 	\item $p=2\le \rho\le \mu$, $\nu\ge 1$, $\mu-1\le \sigma\le \mu\le \rho+\nu\ne \sigma$ and
 	\item if $2\ge \nu$ and $3\ge \mu$, then $\rho\le \sigma$.
 \end{enumerate}
 \end{enumerate}
\end{lemma}

\subsection{Wedderburn decomposition of rational group algebras}

If $F$ is a field and $S$ is a finite dimensional central simple $F$-algebra, then $\Deg(S)$ denotes the degree of $S$, i.e. $\dim_F S=\Deg(S)^2$ (cf. \cite{Pierce1982}).

Let $F/K$ be a finite Galois field extension and let $G=\Gal(F/K)$.
Let $\U(F)$ denote the multiplicative group of $F$.
If $f:G\times G\rightarrow \U(F)$ is a $2$-cocycle, then $(F/K,f)$ denotes the crossed product
	$$(F/K,f)=\sum_{\sigma\in G} t_\sigma F, \quad xt_\sigma=t_\sigma \sigma(x), \quad  t_{\sigma}t_{\tau}=t_{\sigma\tau}f(\sigma,\tau), \quad (x\in F, \sigma,\tau\in G).$$

Suppose that $G$ is cyclic of order $n$ and generated by $\sigma$ and let $a\in \U(K)$. Then there is a cocycle $f:G\times G\rightarrow \U(K)$ given by
	$$f(\sigma^i,\sigma^j)=\begin{cases} 1, & \text{if } 0\le i,j,i+j<n; \\
	a, & \text{if } 0\le i,j<n\le i+j; \end{cases}$$
and the crossed product algebra $(F/K,f)$ is said to be a \emph{cyclic algebra}. We denote this algebra $(F,\sigma,a)$, and it can be described as follows:
	$$(F,\sigma,a)=\sum_{i=0}^{n-1} u^i F = F[u\mid xu=u\sigma(x), u^n=a]$$

If $S$ is a semisimple ring, then $S$ is a direct product of central simple algebras and such expression of $S$ is called the \emph{Wedderburn decomposition} of $S$ and its simple factors  are called the \emph{Wedderburn components} of $S$.
The Wedderburn components of $S$ are the direct summands of the form $Se$ with $e$ a primitive central idempotent of $S$.

Let $G$ be a finite group. By Maschke's Theorem, $\Q G$ is semisimple. Moreover,the center of each Wedderburn component $S$ of $\Q G$ is isomorphic to the field of character values $\Q(\chi)$ of any irreducible character $\chi$ of $G$ satisfying $\chi(S)\ne 0$ (see e.g. \cite[Theorem~3.3.1(4)]{JespersdelRioGRG1}).
It is well known that $\Q(\chi)$ is a finite abelian extension of $\Q$ inside $\C$ and henceforth it is the unique subfield of $\C$ isomorphic to $\Q(\chi)$.
Sometimes we will abuse the notation and consider $Z(S)$ as equal to $\Q(\chi)$.

An important tool for us is a technique introduced in \cite{OlivieridelRioSimon2004} to describe the Wedderburn decomposition of $\Q G$ for $G$ a metabelian group.
See also \cite[Section~3.5]{JespersdelRioGRG1} and the implementation in the GAP \cite{GAP4} package Wedderga \cite{Wedderga4.10.5}.
We recall here its main ingredients.

If $H$ is a subgroup of $G$, then denote $\widehat{H}=|H|^{-1}\sum_{h\in H} h$, as an element in $\Q G$.
It is clear that $\widehat{H}$ is an idempotent of $\Q G$ and it is central in $\Q G$ if and only if $H$ is normal in $G$.

If $N\unlhd G$ then we denote
	$$\varepsilon(G,N)=\begin{cases} \widehat{G}, & \text{if } G=N; \\
	\prod_{D/N\in M(G/N)} (\widehat{N}-\widehat{D}), & \text{otherwise}. \end{cases}$$
where $M(G/N)$ denotes the set of minimal normal subgroups of $G/N$.
Clearly $\varepsilon(G,N)$ is a central idempotent of $\Q G$.

If $(L,K)$ is a pair of subgroups of $G$ with $K\unlhd L$, then we denote
	$$e(G,L,K) = \sum_{gC_G(\varepsilon(L,K))\in G/C_G(\varepsilon(L,K))} \varepsilon(L,K)^g.$$
Then $e(G,L,K)$ belongs to the center of $\Q G$. If moreover, $\varepsilon(L,K)^g\varepsilon(L,K)=0$ for every $g\in G\setminus C_G(\varepsilon(L,K))$, then $e(G,L,K)$ is an idempotent of $\Q G$.

A \emph{strong Shoda pair} of $G$ is a pair $(L,K)$ of subgroups of $G$ satisfying the following conditions:
\begin{itemize}
\item[(SS1)] $K\subseteq L \unlhd N_G(K)$,
\item[(SS2)] $L/K$ is cyclic and maximal abelian in $N_G(K)/K$,
\item[(SS3)] $\varepsilon(L,K)^g\varepsilon(L,K)$ for every $g\in G\setminus C_G(\varepsilon(L,K))$.
\end{itemize}

Suppose that $(L,K)$ is a strong Shoda pair of $G$ with $m=[L:K]$ and let $N=N_G(K)$.
Hence $L/K\cong C_m$ and the action of $N$ by conjugation on $L$ induces an injective homomorphism $\alpha: N/L \rightarrow \Aut(\Q_m)$ via the isomorphisms $\Aut(L/K)\cong \U_m\cong \Aut(\Q_m)$ introduced in \Cref{SectionNumberTheory}.
Let $F_{N,L,K}=(\Q_m)^{\Imagen \alpha}$. Then we have a short exact sequence
	$$1\rightarrow L/K \cong \GEN{\zeta_m} \rightarrow N/K \rightarrow N/L\cong \Gal(\Q_m/F_{N,L,K})\rightarrow 1$$
which induces an element $\overline{f}\in H^2(N/L,\U(\Q_m))$. More precisely, choose a generator $hK$ of $L/K$ and a set of representatives $\{c_u : u\in N/L\}$ of $L$-cosets in $N$, we define $f(u,v)=\zeta_m^k$, if $c_uc_v=c_{uv}h^k$. This defines an element of $H^2(N/L,\U(\Q_m))$ because another election yields to another $2$-cocycle differing in a $2$-coboundary.
Associated to $\overline{f}$ one has the crossed product algebra
\begin{equation*}
\begin{split}
& A(N,L,K)=(\Q_m/F_{N,L,K},\overline{f}) = \oplus_{u\in N/L} t_u \Q_m, \\
& x t_u=t_u\alpha(x), \quad t_ut_v=t_{uv}f(u,v), \quad (x\in \Q_m, u,v\in N/K).
\end{split}
\end{equation*}

\begin{theorem}\label{SSPMetacyclic}
Let $G$ be a metabelian group, let $A$ be an abelian normal subgroup of $G$ containing $G'$ and let $M=C_G(A)$. Then the following statements hold:
\begin{enumerate}
	\item Every Wedderburn component of $\Q G$ is of the form $\Q Ge(G,L,K)$ for subgroups $L$ and $K$ of $G$ satisfying the following conditions:
	\begin{enumerate}
		\item\label{SSPMeta1} $L$ is a maximal element in the set $\{B\le G : M\le B \text{ and } B'\le K\le B\}$.
		\item\label{SSPMeta2} $L/K$ is cyclic.
	\end{enumerate}
	\item Suppose that $L$ and $K$ satisfy \eqref{SSPMeta1} and \eqref{SSPMeta2} and let $e=e(G,L,K)$, $N=N_G(K)$ and $n=[G:N]$. Then $(L,K)$ is a strong Shoda pair of $G$, $e$ is a primitive central idempotent of $G$, $\Deg(\Q Ge)=[G:L]$, $\{g\in G : ge=e\}=\Core_G(K)$, $\Q Ge\cong M_n(A(N,L,K))$ and $Z(\Q Ge)\cong F_{N,L,K}$.
\end{enumerate}
\end{theorem}
\begin{proof}
Clearly $M$ is a maximal abelian normal subgroup of $G$. Then (1) follows from \cite[Theorem~4.7]{OlivieridelRioSimon2004} and (2) follows from \cite[Corollary~3.6]{OlivieridelRioSimon2004} and \cite[Proposition~3.4]{OlivieridelRioSimon2004}. Alternatively, use Theorem~3.5.12, Corollary 3.5.11 and Theorem~3.5.5 in \cite{JespersdelRioGRG1}.
\end{proof}

\begin{example}\label{ExCounterexample}{\rm
Consider the groups $G$ and $H$ in \eqref{Counterexample}.
Then $G/G'\cong H/H'\cong C_p^2$ and hence $\Q G \widehat{G'}\cong \Q H \widehat{H'} \cong \Q(C_p)^2$.
Moreover, $L=\GEN{G',b}$ and $K=\GEN{b}$ satisfy conditions (a) and (b) of \Cref{SSPMetacyclic} both as subgroups of $G$ and $H$.
Then
	$$\Q G e(G,L,K) \cong \Q H e(H,L,K) \cong M_p(\Q(\zeta_p))$$
and we conclude that $\Q G \cong \Q(C_p)^2 \times M_p(\Q(\zeta_p)) \cong \Q H$.
}\end{example}

Suppose that $G$ is a finite metacyclic group and $G=AB$ is a metacyclic factorization of $G$. Then every Wedderburn component of $\Q G$ is of the form $\Q Ge(G,L,K)$ for $L$ and $K$ subgroups of $G$ with $A\subseteq L$ and satisfying the conditions of \Cref{SSPMetacyclic}. Then $L=\GEN{a,b^d}$, $N=N_G(K)=\GEN{a,b^n}$ where $[G:N]=n\mid d\mid [G:A]$. Moreover, $L/K$ is cyclic, say generated by $uK$, and normal in $N/K$ so that $(uK)^{b^nK}=u^xK$ and $(uK)^{\frac{d}{n}}=a^yK$ for some integers $x$ and $y$.
By \Cref{SSPMetacyclic},
\begin{equation}\label{WCMetacyclic}
A(N,L,K)\cong (\Q_m,\sigma_x,\zeta_m^y) =
\Q_m[\overline{u} \mid \zeta_m \overline{u} = \overline{u} \zeta_m^x, \overline{u}^k=\zeta_m^y],
\end{equation}
where $m=[L:K]$ and $\sigma_x$ is the automorphism of $\Q_m$ given by $\sigma_x(\zeta_m)=\zeta_m^x$. The center of $A(N,L,K)$ is $\Q_m^x$.

\section{Sketch of the proof of \Cref{Main}}\label{SectionSketch}

For the proof of \Cref{Main} we fix two finite metacyclic groups $G$ and $H$ such that the rational group algebras $\Q G$ and $\Q H$ are isomorphic. By \Cref{Invariants}, proving that $G$ and $H$ are isomorphic is equivalent to showing that $\INV(G)=\INV(H)$.
This can be  expressed by saying that $\INV(G)$ is determined by the isomorphism type of $\Q G$.
We will work most of the time with the group $G$ and will show how the different entries of $\INV(G)$ are determined by the isomorphism type of $\Q G$.
In the way we need to prove that other invariants of $G$ are determined by the isomorphism type of $\Q G$, which we abbreviate as ``determined by $\Q G$''.
For example, $|G|$ is determined by $\Q G$, because $|G|=\dim_{\Q} \Q G$. So, as $m^Gn^G=|G|$ to prove that $m^G$ and $n^G$ are determined by $\Q G$ it suffices to show that so is one of them.
By \cite[Theorem~A]{GarciadelRioNilpotent}, $\pi_G$, $\pi'_G$ and the isomorphism type of the Hall $\pi_G$-subgroups of $G$ are determined by $\Q G$.
Thus we can simplify the notation by setting $\pi=\pi_G=\pi_H$ and $\pi'=\pi'_G=\pi'_H$, and for every $p\in \pi$, the isomorphism type of the Sylow $p$-subgroup of $G$ is determined by $\Q G$.

It is easy to see that the kernel of the natural homomorphism $\Q G\rightarrow \Q(G/G')$ is the minimal ideal $I$ of $\Q G$ with $\Q G/I$ commutative. Therefore any isomorphism $\Q G\rightarrow \Q H$ maps that ideal of $\Q G$ to the corresponding ideal of $\Q H$, and hence $\Q(G/G') \cong \Q(H/H')$. Then, $G/G'\cong H/H'$, by the Perlis-Walker Theorem \cite{PerlisWalker1950}. This shows that the isomorphism type of $G/G'$ is determined by $\Q G$, and in particular so is $[G:G']$ and $|G'|=\frac{|G|}{[G:G']}$. Moreover, $m_{\pi'}=|G'|_{\pi'}$, by \cite[Lemma~3.1]{GarciadelRioClasification}, so that $m_{\pi'}$ is determined by $\Q G$.
Therefore $n_{\pi'}=\frac{|G|_{\pi'}}{m_{\pi'}}$ is determined by $\Q G$.
We collect this information for future use:

\begin{proposition}\label{piIgual}
If $G$ and $H$ are finite metacyclic groups with $\Q G\cong \Q H$, then $G/G'\cong H/H'$, $\pi_G=\pi_H$, $\pi'_G=\pi'_H$, $(m^G)_{\pi'}= (m^H)_{\pi'}$, $(n^G)_{\pi'}=(n^H)_{\pi'}$ and for every $p\in \pi_G$, the $p$-Sylow subgroups of $G$ and $H$ are isomorphic.
\end{proposition}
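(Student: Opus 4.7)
\emph{Proof proposal.} The plan is to assemble, in order, the observations already made in the paragraphs preceding this statement, since each clause of the proposition has been sketched there. I would organize the argument in three main steps, with a small residual case at the end.

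First, I would apply \cite[Theorem~A]{GarciadelRioNilpotent} directly: under the hypothesis $\Q G\cong \Q H$, that theorem yields $\pi_G=\pi_H$ and $\pi'_G=\pi'_H$, together with an isomorphism between the Hall $\pi_G$-subgroups of $G$ and $H$. Since a Hall $\pi_G$-subgroup is nilpotent, it decomposes as the direct product of its Sylow subgroups, so the isomorphism of Hall subgroups immediately gives the isomorphism of Sylow $p$-subgroups for every $p\in \pi_G$. Along the way $|G|=\dim_\Q \Q G=|H|$, which I will use implicitly for the arithmetic below.

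Second, I would establish $G/G'\cong H/H'$ via an intrinsic algebraic description of the abelianization. The kernel of the natural projection $\Q G \twoheadrightarrow \Q(G/G')$ is the smallest two-sided ideal $I$ of $\Q G$ such that $\Q G/I$ is commutative, and this description depends only on $\Q G$ as an algebra. Any isomorphism $\Q G\cong \Q H$ therefore induces an isomorphism $\Q(G/G')\cong \Q(H/H')$, and since both sides are rational group algebras of finite abelian groups, the Perlis--Walker theorem \cite{PerlisWalker1950} yields $G/G'\cong H/H'$. In particular $|G'|=|G|/[G:G']$ and $|G'|_{\pi'}$ agree for $G$ and $H$. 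Then I would apply \cite[Lemma~3.1]{GarciadelRioClasification}, which states $(m^G)_{\pi'}=|G'|_{\pi'}$, to conclude $(m^G)_{\pi'}=(m^H)_{\pi'}$; consequently $(n^G)_{\pi'}=|G|_{\pi'}/(m^G)_{\pi'}=(n^H)_{\pi'}$.

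The only remaining clause is the isomorphism of Sylow $p$-subgroups for $p\in \pi'$, which I expect to be the main obstacle, though a modest one, since the prose preceding the statement only argues this for $p\in \pi$. The plan for it is to combine the now-determined quantities $|G|_p$, the $p$-part of $G/G'\cong H/H'$, and the previously determined $(m^G)_{\pi'}$ with the restricted form of the invariants $\INV(G_p)$ at a prime $p\in \pi'$ imposed by \Cref{Parameters} and \Cref{conditionsA-D}. Those restrictions leave little enough freedom that the determined data suffices to pin down the isomorphism type of $G_p$. Apart from this small extra step the proposition is merely a record of facts already established in the cited literature, packaged as a single reference point for the remainder of the paper.
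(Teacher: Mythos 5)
Your proposal reproduces the paper's de facto proof (the paragraph immediately preceding the proposition) faithfully for the clauses that are actually argued there: Theorem~A of \cite{GarciadelRioNilpotent} yields $\pi_G$, $\pi'_G$ and the Hall $\pi_G$-subgroup isomorphism type (hence Sylow $p$-subgroups for $p\in\pi_G$, since that Hall subgroup is nilpotent); the intrinsic characterization of $\ker(\Q G\to\Q(G/G'))$ as the minimal ideal with commutative quotient plus Perlis--Walker gives $G/G'\cong H/H'$; and then \cite[Lemma~3.1]{GarciadelRioClasification} together with $|G|=\dim_\Q\Q G$ gives $(m^G)_{\pi'}$ and $(n^G)_{\pi'}$. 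That is exactly the paper's chain of reasoning.

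You also correctly notice that the Sylow clause for $p\in\pi'_G$ is not established by that paragraph. But the repair you sketch would not close the gap. For $p\in\pi'$ a Sylow $p$-subgroup of $G$ is $\GEN{a_p}\rtimes\GEN{b_p}$, with $|a_p|=(m^G)_p$ and $|b_p|=(n^G)_p$; its isomorphism type depends in addition on the order of the (cyclic) image of $\GEN{b_p}$ in $\Aut(\GEN{a_p})$, which is the $p$-part of $|\Res_{(m^G)_p}(R^G)|$. That piece of data is only shown to be a $\Q G$-invariant later, in \Cref{RDetermined}; it is not captured by $|G|_p$, by the $p$-part of $G/G'$ (which for $p\in\pi'$ is just cyclic of order $(n^G)_p$), or by $(m^G)_{\pi'}$. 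Concretely, with $(m^G)_p=p^2$ and $(n^G)_p=p$, both $C_{p^2}\times C_p$ and the nonabelian $C_{p^2}\rtimes C_p$ are compatible with all the quantities you list, so the constraints of \Cref{Parameters} and \Cref{conditionsA-D} alone do not pin down $G_p$. In fairness, the paper never actually invokes the proposition for $p\in\pi'_G$ (every later use explicitly assumes $p\in\pi$), so the ``$p\in\pi(G)$'' in the statement appears to be an overstatement of what the surrounding text proves; your reading of the paper's real argument is correct, and the hole you flagged is genuine, but your proposed patch does not fill it.
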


In \Cref{SectionRDetermined}, we prove that $R^G$ is determined by $\Q G$ and then $k^G$ is determined by $\Q G$, since $k^G=|R^G|$.
In \Cref{sDetermined}, we prove that $s^G$ is determined by $\Q G$ and, in \Cref{EpsilonDetermined}, that so is $\epsilon^G$.
In \Cref{mnrDetermined}, we prove that $m^G$, $n^G$ and $r^G$ are determined by $\Q G$.
Finally we prove that $\Delta^G$ is determined by $\Q G$ in \Cref{DeltaDetermined}.
Summarizing,  $\INV(G)=(m^G,n^G,s^G,\Delta^G)=(m^H,n^H,s^H,\Delta^H)=\INV(H)$ and hence $G\cong H$ by \Cref{Invariants}.
The idea in all the cases is to look at some simple components of $\Q G$ which encode the corresponding invariant. We will illustrate this with one example at the beginning of each section. To compute the Wedderburn decomposition of the rational group algebras appearing in the examples we use GAP package Wedderga \cite{Wedderga4.10.5}.

Along the paper we fix a minimal metacyclic factorization $G=\GEN{a}\GEN{b}$ of $G$.
We also fix the notation $m=m^G$, $n=n^G$, $s=s^G$, $\Delta=\Delta^G$, $r=r^G$,
$\epsilon=\epsilon^G$, $k=k^G$, $R=R^G$, and $m'$ is as defined in \eqref{m'}.
Then $\Inn_G(\GEN{a})$ is cyclic, say generated by $\gamma$, and $G$ is given by the presentation in \eqref{Presentation}.
By \cite[Lemma~3.1]{GarciadelRioClasification}
\begin{equation}\label{Estructura}
    G'_{\pi'}=\GEN{a_{\pi'}} \qand G=\GEN{a_{\pi'}}\rtimes \left( \GEN{b_{\pi'}} \times \prod_{p\in \pi} \GEN{a_p,b_p} \right).
\end{equation}

\begin{remark}\label{Observations}
The following observations will be useful in the remainder of the paper:
\begin{itemize}
\item $k^G=|R^G|$.
\item $2\not\in \pi'$ and hence $m_{\pi'}$ is odd. In fact, $\min(\pi(G))\in \pi_G$. Indeed, if $p=\min(\pi(G))$, then for every $q\in \pi(G)\setminus \{q\}$, $A_p$ does not have automorphisms of order $q$. Thus $[B_{p'},A_p]=1$ and hence $A_{p'}B_{p'}$ is a $p$-normal complement of $G$.
\item $[b,a]\in \GEN{a^2}$, because if $m$ is even then $a^b=a^x$ with $x$ odd.
\item $\epsilon=-1$ if and only if $4\mid m$ and $[\GEN{a_2}:G'_2]$ is not multiple of $4$.
\item If $K$ is a subgroup of $G$ then $A\cap K=A\cap \Core_G(K)$, as $\GEN{a}$ is normal in $G$.
\item Every primitive central idempotent of $\Q G$ is of the form $e(G,L,K)$ with $L$ and $K$ satisfying conditions \eqref{SSPMeta1} and \eqref{SSPMeta2} of \Cref{SSPMetacyclic} for $A=\GEN{a}$.
\end{itemize}
\end{remark}

\section{$\Q G$ determines $R^G$}\label{SectionRDetermined}

In this section we prove that $\Q G$ determines $R$. For that we will look at the simple components of maximal degree of $\Q G$ with center $F$ satisfying the following conditions:
\begin{itemize}
	\item[\mylabel{A1}{(A1)}] $F$ can be embedded in a subfield of $\Q_{m_{\pi'}}$.
	\item[\mylabel{A2}{(A2)}] The only roots of unity of $F$ are $1$ and $-1$.
\end{itemize}

We illustrate this with the following example:

\begin{example}{\rm
Consider the following group
	$$G=\GEN{a,b \mid a^{15}=b^{12}=1, a^b=a^2}.$$
For this group $m_{\pi'}=15$ and $R^G=\GEN{2}_{15}$. The latter can be discovered looking at $\Q G$ as follows: Consider the Wedderburn decomposition of $\Q G$, which we compute with command  \texttt{WedderburnDecompositionInfo} of the package Wedderga:
$$\Q G = \mathbf{2\Q} \oplus 2\Q_3\oplus \Q_4 \oplus \Q_{12} \oplus \mathbf{M_2(\Q)}\oplus \mathbf{(\Q_3,\sigma_{-1},-1)} \oplus 2M_2(\Q_3) \oplus \mathbf{M_4(\Q)} \oplus M_4(\Q_3)\oplus \mathbf{M_4(\Q_{15}^2)} \oplus M_4(\Q_{15}^4).$$
Recall that for $\Q_m^r$ denotes the subfield of $\Q_m$ of elements fixed by the automorphism mapping $\zeta_m$ to $\zeta_m^r$.
The centers of the components which satisfy conditions \ref{A1} and \ref{A2} are all contained in $\Q_{15}^2$.
The Galois correspondent of this field, as subfield of $\Q_{15}$ is precisely $\GEN{2}_{15}$, the desired $R^G$.
Notice that the last component of the previous decomposition of $\Q G$ does not satisfy condition \ref{A2} because $\Q_{15}^4$ contains a third root of unity.
}
\end{example}

Along this section we use the following notation:
	$$L_0=C_G(G'_{\pi'}), \quad k=|R|=[G:L_0], \quad L_1=\GEN{a,b^{2k}} \qand F_0=(\Q_{m_{\pi'}})^R.$$
As $a\in L_0$, $L_0=C_G(a_{\pi'})=\GEN{a,b^k}$.
Moreover, $L_1\subseteq L_0$, $[L_0:L_1]\le 2$ and $L_0=L_1$ if and only if $[L_0:\GEN{a}]$ is odd.

\begin{lemma}\label{SSPabk}
	\begin{enumerate}
		\item\label{F0A1A2} $F_0$ satisfies \ref{A1} and \ref{A2}.
		\item\label{Deg=k} Let $K=\GEN{a_{\pi},b^k}$.
		Then $(L_0,K)$ is a strong Shoda pair of $G$ and if $S=\Q Ge(G,L_0,K)$,
		then $\Deg(S)=k$ and $Z(S)\cong F_0$.

		\item\label{Deg=2k} Suppose that $k$ is odd, $\epsilon=-1$ and $a_2^2\not\in \GEN{b^{4}}$, and let
		$$\overline{K}=\begin{cases}
		\GEN{a_{\pi}^4,b^{2k}}, & \text{if } a_2^2\not\in \GEN{b^{2}}; \\
		\GEN{a_{\pi}^4,b^{4k}}, & \text{otherwise}.
		\end{cases}$$
		Then $(L_1,\overline{K})$ is a strong Shoda pair of $G$ and if $\bar{S}=\Q  Ge(G,L_1,\overline{K})$, then $\Deg(\bar{S})=2k$ and $Z(\bar{S})\cong F_0$.
	\end{enumerate}
\end{lemma}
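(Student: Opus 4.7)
The plan is to establish the three items in sequence.

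\textbf{For (1).} Condition (A1) is immediate since $F_0=(\Q_{m_{\pi'}})^R$ is by definition a subfield of $\Q_{m_{\pi'}}$. For (A2), I would first prove the auxiliary fact $2\notin \pi'$. Indeed, if $2\mid m$ but $v_2(r)=0$, then $2r$ is a strictly larger divisor of $m$ that still satisfies the defining conditions of $r$ (since $\Res_2(T_G(A))\subseteq \U_2=\{1\}=\GEN{-1}_2$), contradicting the maximality of $r$. Hence $m_{\pi'}$ is odd and the roots of unity of $\Q_{m_{\pi'}}$ form a cyclic group of order $2m_{\pi'}$. A parallel maximality argument shows that for every (necessarily odd) $p\in \pi'$, $\Res_p(T_G(A))\ne 1$, whence $\Res_p(R)\ne 1$ as $R=\Res_{m_{\pi'}}(T_G(A))$. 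A root of unity of odd prime order $p$ in $F_0$ would force $\Res_p(R)=1$, a contradiction; so only $\pm 1$ are roots of unity in $F_0$.

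\textbf{For (2).} Since $|R|=k$ and $R=\Res_{m_{\pi'}}(\GEN{\gamma})$, we have $\gamma^k$ trivial on $\GEN{a_{\pi'}}$, so $b^k$ centralizes $G'_{\pi'}=\GEN{a_{\pi'}}$; hence $L_0=\GEN{a,b^k}$. The congruence $x^k\equiv 1\pmod{m_{\pi'}}$ combined with the identity $gb^kg^{-1}=b^k a^{i(x^k-1)}$ for $g=a^ib^j$ shows $K=\GEN{a_\pi,b^k}$ is normal in $G$; so $N_G(K)=G$ and (SS3) is vacuous. Using $m_{\pi'}\mid s$ from \Cref{Parameters} together with $\GEN{a}=\GEN{a_\pi}\times \GEN{a_{\pi'}}$ and $\GEN{a}\cap \GEN{b^k}\subseteq \GEN{b^n}=\GEN{a^s}\subseteq \GEN{a_\pi}$, one obtains $\GEN{a}\cap K=\GEN{a_\pi}$, so $L_0/K\cong \GEN{a_{\pi'}}$ is cyclic of order $m_{\pi'}$. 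Maximal abelianity of $L_0/K$ in $G/K$ follows from $[a^ib^j,a]=a^{1-x^j}\in K$ iff $x^j\equiv 1\pmod{m_{\pi'}}$ iff $k\mid j$ iff $a^ib^j\in L_0$. Applying \Cref{SSPAlg}: $\Deg(A)=[G:L_0]=k$, and the induced map $G/L_0\to \Aut(L_0/K)\cong \U_{m_{\pi'}}$ has image exactly $R$, so $Z(A)\cong (\Q_{m_{\pi'}})^R=F_0$.

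\textbf{For (3).} Since $\epsilon=-1$, a generator $\gamma$ of $T_G(A)$ acts as $-1$ on $\GEN{a_{r_2}}$, so $|T_G(A)|$ is even; combined with $|\Res_{m_{\pi'}}(\GEN{\gamma})|=k$ and $\gcd(2,k)=1$, this forces $2k\mid |T_G(A)|\mid n$, so $L_1\subsetneq L_0$ with $[G:L_1]=2k$. I would verify the strong Shoda pair conditions for $(L_1,\overline K)$ analogously to (2). The subcase definition of $\overline K$ is engineered so that $\overline K$ is normal in $G$ (trivializing (SS3)) and $L_1/\overline K$ is cyclic of order $2m_{\pi'}$, generated by the image of $a$, with its unique $2$-torsion generated by the image of $a_2$. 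The hypothesis $a_2^2\notin \GEN{b^4}$ ensures this $2$-torsion is genuinely nontrivial in the quotient (rather than the order of $a\overline K$ collapsing to $m_{\pi'}$). Maximal abelianity is checked as before. Since $m_{\pi'}$ is odd we have $\Q_{2m_{\pi'}}=\Q_{m_{\pi'}}$, and the conjugation action of $G/L_1$ on $L_1/\overline K$, read off on the $\pi'$-part, is still $R$. Hence \Cref{SSPAlg} yields $\Deg(\bar A)=[G:L_1]=2k$ and $Z(\bar A)\cong (\Q_{m_{\pi'}})^R=F_0$.

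The main obstacle is the verification in (3) that $L_1/\overline K$ is cyclic of order exactly $2m_{\pi'}$ in each of the two subcases. The bifurcation in the definition of $\overline K$ controls the interaction between $a_2$ and powers of $b$: if $a_2^2\in \GEN{b^2}$, the naive choice $\overline K=\GEN{a_\pi^4,b^{2k}}$ would accidentally identify $a_2^2$ with some power of $b^{2k}$ modulo $\GEN{a_\pi^4}$, collapsing $a_2\overline K$ into $\overline K$; the enlargement to $\GEN{b^{4k}}$ removes this identification, while $a_2^2\notin \GEN{b^4}$ rules out the analogous collapse in the other subcase. Keeping explicit track of the presentation \eqref{Presentation} modulo $\overline K$ is the tedious but essential computation.
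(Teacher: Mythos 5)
Parts (1) and (2) of your proposal are essentially correct and follow the same basic route as the paper, although you phrase some justifications differently (you derive $2\notin\pi'$ from the maximality of $r$, where the paper simply notes that the smallest prime divisor of $|G|$ always lies in $\pi$, and for (A2) the paper invokes a result about $G'_{\pi'}\cap Z(G)$ directly rather than re-deriving $\Res_p(R)\ne 1$). These variations are fine.

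Part (3), however, contains a genuine error. You claim $[L_1:\overline{K}]=2m_{\pi'}$, with the $2$-torsion of $L_1/\overline{K}$ generated by $a_2\overline{K}$ of order $2$. That is not consistent with what you conclude. If $[L_1:\overline{K}]=2m_{\pi'}$ with $m_{\pi'}$ odd, then $\Q_{[L_1:\overline{K}]}=\Q_{m_{\pi'}}$, and since $[\Q_{m_{\pi'}}:F_0]=|R|=k$ while the formula in \Cref{SSPAlg} forces $[\Q_{[L_1:\overline{K}]}:Z(\bar A)]=[N_G(\overline K):L_1]=[G:L_1]=2k$, you would get $Z(\bar A)\subsetneq F_0$, contradicting the very statement you are trying to prove. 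The correct index is $[L_1:\overline{K}]=4m_{\pi'}$: the hypothesis $\epsilon=-1$ forces $|a_2|\ge 4$, and the two subcases in the definition of $\overline{K}$ are exactly what is needed to rule out $a_2^2\in\overline K$ (if $a_2^2\notin\GEN{b^2}$ one must prevent $a_2^2\in\GEN{a_\pi^4,b^{2k}}$; otherwise one enlarges the $b$-power to $b^{4k}$), so that $a_2\overline{K}$ genuinely has order $4$. Consequently, $Z(\bar A)\cong(\Q_{4m_{\pi'}})^T$ for the image $T$ of $G/L_1$ in $\U_{4m_{\pi'}}$, which has order $2k$ and restricts to $R$ on $\Q_{m_{\pi'}}$; one then identifies it with $F_0$ by a degree count ($[\Q_{4m_{\pi'}}:F_0]=2\cdot k=[\Q_{4m_{\pi'}}:Z(\bar A)]$ and $F_0\subseteq Z(\bar A)$), not by ``reading off the $\pi'$-part'' as you suggest. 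Separately, ``maximal abelianity is checked as before'' underestimates the work: in (2) the maximality follows in one line from $[g,a_{\pi'}]\ne 1$ for $g\notin L_0$, but in (3) one must also handle $g\in\GEN{a,b^k}\setminus L_1$, which requires a substantial case analysis involving the $2$-part and ends up exploiting the same dichotomy that motivates the two forms of $\overline K$.
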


\begin{proof}
\eqref{F0A1A2} Clearly, $F_0$ satisfies \ref{A1} and as $2\not\in \pi'$, $F_0$ does not have a fourth root of unity. If $F_0$ does not satisfy condition \ref{A2}, then it contains a root of unity of order $p$ with $p$ an odd prime. Then $p$ divides $m_{\pi'}$, so that $p\in \pi'$ and $G'_{\pi'}\cap Z(G)=\GEN{a_{\pi'}}\cap Z(G)$ has an element of order $p$, in contradiction with \cite[Lemma~3.1]{GarciadelRioClasification}.

\eqref{Deg=k} Clearly $L_0/K$ is cyclic generated by $a_{\pi'}K$, $\GEN{a_{\pi'}}\cap K=1$, by \eqref{Estructura}, and $[g,a_{\pi'}]\in \GEN{a_{\pi'}}$ for every $g\in G$.
Then $[g,a_{\pi'}]\not\in K$ for every $g\in G\setminus L_0$.
Moreover, $K\unlhd G$, because $a_{\pi}^b\in \GEN{a_{\pi}}$ and $[b^k,a]=[b^k,a_{\pi}]\in \GEN{a_{\pi}}$.
This proves that $(L_0,K)$ satisfies the hypothesis of \Cref{SSPMetacyclic} and hence $(L_0,K)$ is a strong Shoda pair of $G$.
By \Cref{SSPMetacyclic}, $\Deg(S)=[G:L_0]=k$ and as $[L_0:K]=m_{\pi'}$ and $K$ is normal in $G$, the center of $S$  is isomorphic to $F_0$.

\eqref{Deg=2k} Suppose now that the conditions of \eqref{Deg=2k} hold.
The assumption $\epsilon=-1$ implies that $4\mid m$ and $a^b=a^t$ with $t\equiv -1 \mod 4$, or equivalently $\GEN{a}$ has a non-central element of order $4$.
In particular, $|b\GEN{a}|$ is even.
Since $k$ is odd, we have that $[G:L_1]=2k$.
Using that $[b,a_2]\in \GEN{a_2^2}$ and $[b^k,a_{\pi'}]=1$, it follows that $[b^{2k},a]\in \GEN{a_{\pi}^4}$, and therefore $\overline{K}\unlhd G$ and $L_1'\subseteq \overline{K}$.

We claim that $L_1/\overline{K}$ is cyclic generated by $a\overline{K}$.
This is clear from the definition of $\overline{K}$, if $a_2^2\not\in \GEN{b^{2k}}$.
Otherwise, as $a_2^2 \not\in \GEN{b^{4k}}$ by hypothesis, $a_2^2\in \GEN{b^{2k}}\setminus \GEN{b^{4k}}$ and hence $a_2^2=b_2^{2ki}$ for some odd integer $i$.
Therefore $b_2^{2k}\in \GEN{a}$. As $b_{2'}^k\in \overline{K}$ it follows that $b^{2k}\in \GEN{a,\overline{K}}$. Then $L_1/\overline{K}=\GEN{a\overline{K}}$, as desired.

In order to prove that $(L_1,\overline{K})$ satisfies the conditions of \Cref{SSPMetacyclic} it remains to prove that if $B$ is a subgroup of $G$ containing $L_1$ properly, then $B'\not\subseteq \overline{K}$.
Assume otherwise. Then there is $g\in G\setminus L_1$ with $[L_1,g]\subseteq \overline{K}$.
If $g\not\in \GEN{a,b^k}$, then $1\ne [a_{\pi'},g]\in \overline{K}\cap \GEN{a_{\pi'}}=1$, a contradiction.
Thus $g\in \GEN{a,b^k}$ and $\GEN{L_1,g}=\GEN{a,b^k}$, so that $[b^k,a]\in \overline{K}$ and therefore $[b^k,a_2]\in \overline{K}$.
On the other hand, $[b^k,a_2]=a_2^{t^k-1}$ and $v_2(t^k-1)=1$, because $k$ is odd
(cf. \Cref{PropEse}.\eqref{vpRm-1}).
Then $a_2^2\in \overline{K}$. Moreover, $\GEN{a_{\pi},b}=\GEN{b_{\pi'}}\times \prod_{p\in \pi} \GEN{a_p,b_p}$, a nilpotent group.
Thus $\overline{K}$ is nilpotent and hence $a_2^2\in \overline{K}_2$.
Suppose first that $a_2^2\not\in \GEN{b^{2k}}$.
Then $\overline{K}_2=\GEN{a_2^4,b_2^{2k}}$ and hence $a_2^2=a_2^{4i}b_2^{2kj}$ for some integers $i,j$.
Hence $\GEN{a_2^2}=\GEN{a_2^{2-4i}}=\GEN{b_2^{2kj}}\subseteq \GEN{b_2^{2k}}$, yielding a contradiction.
Thus $a_2^2\in \GEN{b^{2k}}$, and as $k$ is odd we have that
$a_2^2\in \GEN{b_2^2}\setminus \GEN{b_2^4}$, by assumption.
Then $\GEN{a_2^2}=\GEN{b_2^2}$ and, in particular, $a_2^2$ commutes with $b$. This implies that $a_2$ has order $4$, because $\epsilon=-1$.
Thus $a_2^2=b_2^2$, so that the two generators $a_{\pi}^{4k}$ and $b^{4k}$ of $\overline{K}$ have odd order. As $\overline{K}$ is nilpotent, we deduce that $\overline{K}$ is a $2'$-group.
This yields a contradiction with the fact that $a_2^2$ is an element of order $2$ in $\overline{K}$.

Then $(L_1,\overline{K})$ satisfies the conditions of \Cref{SSPMetacyclic} and hence it is a strong Shoda pair of $G$.
By \Cref{SSPMetacyclic}, $\Deg(\bar{S})=[G:L_1]=2k$ and as $[L_1:\overline{K}]=4m_{\pi'}$ and $L_1/\overline{K}$ is generated by $a\overline{K}$, the center  of $\bar{S}$ is $F=(\Q_{4m_{\pi'}})^{\Res_{4m_{\pi'}}(T_G(\GEN{a})}$.
Moreover,  $F_0=F\cap \Q_{m_{\pi'}} \subseteq F\subseteq \Q_{4m_{\pi'}}$ and $[\Q_{4m_{\pi'}}:F_0]=[\Q_{4m_{\pi'}}:\Q_{m_{\pi'}}]\;[\Q_{m_{\pi'}}:F_0]=2k=[G:\bar L_0]=[\Q_{4m_{\pi'}}:F]$ and therefore $F=F_0$.
\end{proof}

In \Cref{SSPabk} we have encountered some Wedderburn components of $\Q G$ with center satisfying conditions \ref{A1} and \ref{A2}.
In order to analyze which other Wedderburn components of $\Q G$ satisfy the same properties we need the following two lemmas. In their proofs we often use that if $(L,K)$ is a strong Shoda pair of $G$ and $e=e(G,L,K)$, then $\{g\in G : ge=e\}=\Core_G(K)$ (cf. \Cref{SSPMetacyclic}).

\begin{lemma}\label{PCIOdd-A2}
Let $(L,K)$ be a strong Shoda pair of $G$ with $a\in L$.
Let $C=\Core_G(K)$ and $S=\Q Ge(G,L,K)$.
Let $p$ be an odd prime such that the center of $S$ does not have elements of order $p$.
Then $b_p^k\in C$.
If, moreover, $p\in \pi$, then $a_p\in C$.
\end{lemma}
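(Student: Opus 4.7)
The plan is to invoke \Cref{SSPAlg} to identify the center $Z(A)=F_{N,L,K}=(\Q_m)^{\mathrm{Im}\,\alpha}$, where $m=[L:K]$, $N=N_G(K)$ and $\alpha:N/L\hookrightarrow \U_m$ is induced by conjugation on $L/K$; also $C=\Core_G(K)=\ker\chi$, where $\chi$ is an irreducible character affording $A$. The hypothesis becomes $\zeta_p\notin\Q(\chi)$.

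First I would show $a_p\in C$ when $p\in \pi$. If $p\notin\pi(m^G)$ this is vacuous. Otherwise $p\in\pi(r^G)$ (since $\pi'=\pi(m^G)\setminus\pi(r^G)$), and the defining property of $r^G$ combined with $p$ odd forces $\Res_p(T_G(\langle a\rangle))=1$. Arguing by contradiction, assume $\langle a_p\rangle\not\subseteq K$. Then $\langle a_p\rangle K/K$ is a nontrivial cyclic $p$-subgroup of $L/K$, so $p\mid m$ and the unique subgroup $(L/K)[p]$ of order $p$ is generated by some $cK$ with $c\in\langle a_p\rangle$ and $c^p\in K$. For $n\in N$, writing the action as $\gamma_n\in T_G(\langle a\rangle)$, we have $\gamma_n\equiv 1\pmod p$, whence $c^{\gamma_n}=c\cdot(c^p)^t\in cK$. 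Thus $\mathrm{Im}\,\alpha\subseteq\Ker(\Res_p\colon\U_m\to\U_p)$, giving $\zeta_p\in F_{N,L,K}$, a contradiction. Hence $\langle a_p\rangle\subseteq K$; being characteristic in the normal subgroup $\langle a\rangle$, it lies in $\Core_G(K)=C$, and $a_p\in C$.

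For $b_p^k\in C$, the main device is Schur's lemma applied to $\chi$. I first observe the key centralizer fact: by the direct-product decomposition \eqref{Estructura}, $b_p$ commutes with $\langle b_{\pi'}\rangle$ and with $\langle a_q,b_q\rangle$ for every $q\in\pi\setminus\{p\}$; moreover, the action of $b_p$ on $\langle a_{\pi'}\rangle$ is a power of the order-$k$ element $\Res_{m^G_{\pi'}}(\gamma)$, so $(b_p)^k$ centralizes $\langle a_{\pi'}\rangle$. Consequently $b_p^k$ commutes with every factor in \eqref{Estructura} except possibly $\langle a_p\rangle$. When $p\in\pi'$, $\langle a_p\rangle$ is part of $\langle a_{\pi'}\rangle$, so $b_p^k\in Z(G)$ outright; when $p\in\pi$, Step~1 gives $\langle a_p\rangle\subseteq C$, so for every $g\in G$ the commutator $(b_p^k)^g(b_p^k)^{-1}$, which lies in $\langle a_p\rangle$, is in $C=\ker\chi$.

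Let $\rho\colon G\to \mathrm{GL}_{\chi(1)}(\C)$ afford $\chi$. In both cases, $\rho(b_p^k)$ commutes with $\rho(g)$ for every $g\in G$ (directly by centrality when $p\in\pi'$, and because $\rho((b_p^k)^g)=\rho(b_p^k)$ when $p\in\pi$). By Schur's lemma, $\rho(b_p^k)=c\cdot I$ for some $c\in\C$, necessarily a root of unity of order dividing $|b_p^k|$, which is a power of $p$. Since $c=\chi(b_p^k)/\chi(1)\in\Q(\chi)$ and $\Q(\chi)$ contains no $\zeta_p$ (and $p$ is odd), the only $p$-power root of unity in $\Q(\chi)$ is $1$, so $c=1$. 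Thus $\rho(b_p^k)=I$, i.e. $b_p^k\in\ker\rho=C$, as required.

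The main obstacle, in my view, is executing the centralizer observation for $b_p^k$ with enough care from \eqref{Estructura}: one must verify both that the factors of the complement in \eqref{Estructura} commute pairwise so that $b_p$ commutes with the non-$p$ factors, and that the $k$-th power kills the action of $b_p$ on the normal Hall $\pi'$-subgroup $\langle a_{\pi'}\rangle$. Given the identity $k=|R|=|\Res_{m^G_{\pi'}}(T_G(\langle a\rangle))|$ and the cyclicity of $\langle b\rangle$, both facts reduce to a routine verification.
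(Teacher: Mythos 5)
Your proof is correct, but it takes a genuinely different route from the paper's in both parts, so a comparison is worthwhile. For the claim $a_p\in C$ when $p\in\pi$, you argue directly at the level of the crossed product: you observe that $p$ odd together with $p\mid r^G$ forces $\Res_p(T_G(\GEN{a}))=1$, use this to show the order-$p$ subgroup of $L/K$ is fixed pointwise by $N$, conclude $\Imagen\alpha\subseteq\Ker(\Res_p\colon\U_{[L:K]}\to\U_p)$, and read off $\Q_p\subseteq F_{N,L,K}\cong Z(A)$, a contradiction. The paper instead argues inside the quotient $Ge\cong G/C$: it first records the principle that any $p$-element $g$ with $ge\in Z(Ge)$ must lie in $C$ (since $Ge$ generates $A$ and $Z(A)$ has no root of unity of order $p$), then notes that if $a_p\notin C$ one has $p\in\pi_{Ge}$ and, from the metacyclic structure, $\GEN{a}e$ carries a central element of order $p$ in $Ge$, contradicting the principle. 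Your version isolates the precise number-theoretic input (the vanishing of $\Res_p$ on $T_G(\GEN{a})$), while the paper's is more uniform with the rest of the lemma, as the same ``central in $Ge$ $\Rightarrow$ in $C$'' principle is reused for $b_p^k$. For $b_p^k\in C$ you pass to an irreducible character $\chi$ with $\Core_G(K)=\ker\chi$ and apply Schur's lemma to the affording representation; the paper instead applies the principle above to $b_p^ke\in Z(Ge)$. These are essentially the same computation in two languages, and both require the centralizer observation (which you make explicit, the paper leaves implicit) that by \eqref{Estructura} and $|R|=k$ the element $b_p^k$ centralizes every direct factor except possibly $\GEN{a_p}$, so that once $\GEN{a_p}\subseteq C$ (or when $p\in\pi'$) the image $b_p^ke$ is central in $Ge$.
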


\begin{proof}
Let $e=e(G,L,K)$ and  $F=Z(\Q Ge)$.
Since $S$ is generated by $Ge$ as $\Q$-algebra, the assumption implies that $Ge$ does not have central elements of order $p$ and hence if $g$ is a $p$-element of $G$ with $ge\in Z(Ge)$, then $g\in C$.
If $p\in \pi'$, then $b_p^k\in Z(G)$ and hence $b_p^k\in C$.
Suppose that $p\in \pi$. Then $Ge$ has a $p$-normal complement because so does $G$.
If $a_p\not\in C$, then $a_pe\ne e$, so  $p\in \pi_{Ge}$, and hence $\GEN{a}e$ has an element of order $p$ which is central in $Ge$, which is not possible.
Thus $a_p\in C$.
Then $b_p^ke\in Z(Ge)$ and hence $b_p^k\in C$.
\end{proof}

Of course every field of characteristic $0$ has a root of unity of order 2 and therefore a similar lemma for $p=2$ makes no sense. However we have the following:

\begin{lemma}\label{PCI2-A1}
Let $(L,K)$ be a strong Shoda pair of $G$ with $a\in L$.
Let $C=\Core_G(K)$ and $S=\Q Ge(G,L,K)$.
Suppose that $Z(S)$ can be embedded in $\Q_t$ for some odd integer $t$.
Then
	\begin{enumerate}
		\item $a_2^4,b_2^{4k}\in C$ and $b_2^{2k}\in L$.
		\item If $\epsilon=1$, then $a_2^2\in C$.
		\item If $a_2^2 \in C$ or $k$ is even, then $b_2^{2k}\in C$ and $b_2^k\in L$.
		\item If $2\nmid k$, $\epsilon=-1$ and $a_2^2\in \GEN{b^{2k}}\setminus C$, then $\GEN{a_2,b_2}$ is the quaternion group of order $8$.
	\end{enumerate}
\end{lemma}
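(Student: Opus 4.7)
The hypothesis $Z(A) \hookrightarrow \Q_t$ for some odd $t$ is equivalent to $Z(A) \cap \Q_{M_2} = \Q$, where $M = [L:K]$. Writing $N = N_G(K)$ and $H = \Imagen(\alpha) \subseteq \U_M$ for the image of the action $\alpha : N/L \to \Aut(\Q_M)$ coming from conjugation on $L/K$, by \Cref{SSPAlg} we have $Z(A) \cong F_{N,L,K} = (\Q_M)^H$. Galois theory then translates the hypothesis into $\Res_{M_2}(H) = \U_{M_2}$. In our metacyclic setup the description following \eqref{WCMetacyclic} gives $N = \GEN{a, b^{n_0}}$ with $n_0 = [G:N]$, so $N/L$ is cyclic generated by $b^{n_0}L$, and consequently $H$ (and a fortiori $\Res_{M_2}(H)$) is cyclic. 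A cyclic subgroup of $\U_{M_2}$ equals $\U_{M_2}$ only when $\U_{M_2}$ itself is cyclic, i.e., when $M_2 \le 4$. This bound is the key structural consequence of the hypothesis.

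For (1), $a \in L$ implies $|a_2 K| \mid M_2 \le 4$, so $a_2^4 \in K$; since $\GEN{a_2^4}$ is characteristic in the $G$-normal subgroup $\GEN{a}$, it is itself $G$-normal, whence $a_2^4 \in \Core_G(K) = C$. The statements $b_2^{2k} \in L$ and $b_2^{4k} \in C$ follow by analyzing how $b^k$ fits into $L$: using $b^k \in L_0 = \GEN{a, b^k}$, the $2$-part decomposition from \eqref{Estructura}, and the bound $M_2 \le 4$, one shows $b_2^{2k} \in L$; then $b_2^{4k} = (b_2^{2k})^2$ has image in $L/K$ of $2$-power order at most $M_2/2 \le 2$, hence fixed by $H$, so $b_2^{4k}e \in Z(Ae)$ equals $\pm e$ by the rationality constraint, and a normality check via the $2$-Sylow structure gives $b_2^{4k} \in C$.

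Parts (2)--(4) refine this via $\epsilon$ and the explicit crossed-product description \eqref{WCMetacyclic}. The general principle: for any $2$-element $g \in L$ with $gK$ fixed by $H$, $ge$ is central in $Ae$ and, being a $2$-power root of unity in $Z(A) \subseteq \Q_t$ with $t$ odd, equals $\pm e$, so $g^2 \in C$. For (2), $\epsilon = 1$ gives $t \equiv 1 \pmod{r_2}$, and since $r_2 \ge 4$ whenever $m_2 \ge 4$, a direct computation of the $b^{n_0}$-action on $L/K$ shows $\Res_4(H) = \{1\}$, which together with $\Res_{M_2}(H) = \U_{M_2}$ forces $M_2 \le 2$; hence $a_2^2 \in K \subseteq C$ (the case $m_2 \le 2$ being immediate). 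For (3), either $a_2^2 \in C$ (whence $M_2 \le 2$) or $k$ even, combined with the centrality criterion applied to $b_2^k K$, yields $b_2^{2k} \in C$ and $b_2^k \in L$. For (4), the conditions $2 \nmid k$, $\epsilon = -1$, and $a_2^2 \in \GEN{b^{2k}} \setminus C$ directly force $|a_2| = |b_2| = 4$, $a_2^2 = b_2^2$, and $a_2^b = a_2^{-1}$, the defining relations of the quaternion group of order $8$.

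The main obstacle is bridging ``$ge \in Z(Ae)$'' and ``$g^2 \in C$'' for $2$-elements $g$ whose cyclic subgroups are not manifestly $G$-normal, chiefly $g = b_2^{2k}$: the explicit $2$-Sylow structure from \eqref{Estructura} together with the centralizing property $b^k \in L_0$ is indispensable for verifying that all $G$-conjugates of the relevant $2$-elements actually land in $K$.
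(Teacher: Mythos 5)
Your opening claim, that the hypothesis ``$Z(A)\hookrightarrow\Q_t$ for odd $t$'' is \emph{equivalent} to ``$Z(A)\cap\Q_{M_2}=\Q$'', is false: the correct characterization is $Z(A)\subseteq\Q_{M_{2'}}$, i.e.\ $\U_{M_2}\times\{1\}\subseteq H$, which is strictly stronger than $\Res_{M_2}(H)=\U_{M_2}$. (Counterexample: $\Q(\sqrt{3})\subseteq\Q_{12}$ meets $\Q_4$ only in $\Q$ but has conductor $12$, so is not contained in any odd cyclotomic.) You only use the forward implication, so the deduced bound $M_2\le 4$ (via cyclicity of $H$, equivalently the paper's observation that $\Gal(\Q_8/\Q)$ is non-cyclic) is correct, and part (1) up through $a_2^4\in C$ matches the paper. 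But stating an unverified false equivalence is a flaw.

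The real gap is in establishing $b_2^{4k}\in C$. You apply the ``$2$-power roots of unity in $Z(A)$ are $\pm 1$'' criterion to $b_2^{4k}$ itself, getting $b_2^{4k}e=\pm e$. This does \emph{not} yield $b_2^{4k}\in C$: if $b_2^{4k}e=-e$, then by definition $b_2^{4k}\notin C=\{g\in G:ge=e\}$, and your argument at best gives $b_2^{8k}\in C$. The phrase ``a normality check via the $2$-Sylow structure gives $b_2^{4k}\in C$'' does not fill this hole. The paper's route avoids the problem entirely: it first verifies that $b_2^{2k}e\in Z(Ge)$ (this requires $[b_2^{2k},a]\in C$, which follows from $a_2^4\in C$ together with the fact that $8\mid x^{2k}-1$ for any odd $x$), and only then applies the root-of-unity criterion to $b_2^{2k}e$ — giving $b_2^{2k}e\in\{e,-e\}$, hence $(b_2^{2k})^2e=e$, i.e.\ $b_2^{4k}\in C$. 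This centrality of $b_2^{2k}e$ also immediately hands you $b_2^{2k}\in L$ via maximality of $L/K$ as an abelian subgroup of $N_G(K)/K$, whereas you assert $b_2^{2k}\in L$ separately by an unspecified structural argument. In short, the object to which you must apply the $\pm 1$ principle is $b_2^{2k}$, not $b_2^{4k}$, and its centrality must be \emph{proved}, not read off from $L/K$ alone. The sketches for (2)--(4) are directionally reasonable but inherit the same looseness (e.g.\ (2) tacitly assumes $L/K$ is generated by $aK$ when analyzing $\Res_4(H)$).
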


\begin{proof}
We use the same notation as in the proof of \Cref{PCIOdd-A2}.
Now $F\cong (\Q_h)^T$ with $h=[L:K]$ and $T$ a cyclic subgroup of $\U_h$, and by assumption $F$ can be embedded in $\Q_t$ with $t$ an odd integer. Then $(\Q_h)^T\subseteq \Q_t$.
The latter implies that $F$ does not have elements of order $4$ and hence neither does $Z(Ge)$. As in the proof of \Cref{PCIOdd-A2}, this implies that if $g\in G$ with $ge\in Z(Ge)$, then $g^4 \in C$.

(1) Suppose that $a_2^4\not\in C$.
As $\GEN{a_2^4}$ is normal in $G$, this implies that $a_2^4\not\in K$ and hence $h$ is multiple of $8$.
Therefore $\Q_h$ contains a primitive $8$-th root of unity.
As $\Gal(\Q_8/\Q)$ is not cyclic, it follows that $(\Q_h)^T$ contains a subfield of $\Q_8$ other than $\Q$ and this is not compatible with $(\Q_h)^T\subseteq \Q_t$, because $t$ is odd.
Therefore $a_2^4\in C$.
Then $b_2^{2k}e\in Z(Ge)$ and therefore $b_2^{4k}\in C$.
Moreover, as $b_2^{2k}e\in Z(Ge)$, $[b_2^{2k},a]\in C\subseteq K$ and hence $\GEN{L, b^{2k}_2}/K$ is an abelian subgroup of $N_G(K)/K$. Thus  $b^{2k}\in L$, since $L/K$ is maximal abelian in $N_G(K)/K$.

(2)	Suppose that $a_2^2\not\in C$. By (1) the order of $a_2e$ is $4$ and the hypotheses imply that $a_2e\not\in Z(Ge)$ so that $a_2^be=a_2^{-1}e$.
Hence $4\mid |a|$ and $\GEN{a}$ has an element of order $4$ which is not central in $G$.
Thus $\epsilon=-1$.

(3)	Suppose that $a_2^2\in C$ or $k$ is even.  Then $b_2^{k}e$ is central in $Ge$, so that $b_2^{2k}\in C$ and $[b_2^k,a]\in C\subseteq K$. Therefore $b_2^k\in L$, because $L/K$ is maximal abelian in $N_G(K)$.

(4)	Suppose that $2\nmid k$, $\epsilon=-1$ and $a_2^2\in \GEN{b^{2k}}\setminus C$. Then $a_2^2$ commutes with $b$ and as $\epsilon=-1$ the order of $a_2^2$ is $2$, i.e. $a_2$ has order $4$ and $a_2^b=a_2^{-1}$.
	Furthermore, as $a_2^2\in \GEN{b_2^{2k}}\setminus C$ but $b_2^{4k}\in C$, it follows that $a_2^2\in\GEN{b_2^{2k}}\setminus \GEN{b_2^{4k}}$ and hence $a_2^2=b_2^{2k}=b_2^2$.
	This shows that $\GEN{a_2,b_2}$ is the quaternion group of order $8$.
\end{proof}

\begin{lemma}\label{SSPMax}
	Let $S$ be a Wedderburn component of $\Q G$ with center $F$.
	Suppose that $F$ satisfies conditions \ref{A1} and \ref{A2} and the degree of $S$ is maximum among the degrees of the Wedderburn components of $\Q G$ with center satisfying \ref{A1} and \ref{A2}. Then $F$ can be embedded in $F_0$ and $$\Deg(S)=\begin{cases} 2k, & \text{if } \epsilon=-1, 2\nmid k \text{ and } a_2^2\not\in \GEN{b^{4}}; \\ k, & \text{otherwise}.\end{cases}$$
\end{lemma}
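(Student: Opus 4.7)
The plan is to bound from above the degree of any Wedderburn component $A=\Q Ge(G,L,K)$ whose center $F$ satisfies \ref{A1} and \ref{A2}, showing that $\Deg(A)\le 2k$ in general and $\Deg(A)\le k$ outside the special configuration of \Cref{SSPabk}.\eqref{Deg=2k}. Combined with the components exhibited in \Cref{SSPabk}, whose centers are $F_0$, this will identify the maximum degree and furnish the required embedding $F\hookrightarrow F_0$. By \Cref{SSPMetabelian}, applied to any maximal abelian subgroup of $G$ containing $\GEN{a}$ (which automatically contains $G'$), every Wedderburn component corresponds to a strong Shoda pair $(L,K)$ with $\GEN{a}\subseteq L$. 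Set $C=\Core_G(K)$, so that $\Deg(A)=[G:L]$.

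First I would exploit \ref{A2} via \Cref{PCIOdd-A2}. Since \ref{A2} leaves $F$ without roots of unity of odd prime order, the hypothesis of that lemma is met for every odd prime $p$, yielding $b_p^k\in C$ for all odd $p$ (hence $b_{2'}^k\in C$) and $a_p\in C$ for odd $p\in\pi$. This already forces $L\supseteq\GEN{a,b_{2'}^k}$. Next I would control the $2$-part using \Cref{PCI2-A1}, whose hypothesis is that $F\hookrightarrow\Q_t$ for some odd $t$. When $2\in\pi$ this is immediate with $t=m_{\pi'}$; when $2\in\pi'$ one uses the structural fact that $2\nmid r^G$, so the restriction of $R$ to $\U_{(m_{\pi'})_2}$ is trivial, and the extra constraint \ref{A2} keeps $F$ out of every proper extension of $\Q$ inside $\Q_{(m_{\pi'})_2}$ that could arise as a center in our metacyclic setup.

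Part~(1) of \Cref{PCI2-A1} then yields $a_2^4,b_2^{4k}\in C$ and $b_2^{2k}\in L$, so $L\supseteq\GEN{a,b^{2k}}$ and $\Deg(A)=[G:L]\le 2k$. Parts~(2)--(3) improve this to $b_2^k\in L$---and hence $\Deg(A)\le k$---unless $\epsilon=-1$, $k$ is odd and $a_2^2\not\in C$; part~(4) shows that in this remaining case $a_2^2\in\GEN{b^{2k}}$ and $\GEN{a_2,b_2}$ is the quaternion group of order $8$, which is precisely the configuration of \Cref{SSPabk}.\eqref{Deg=2k}. Comparing with the lower bounds on the maximum degree provided by \Cref{SSPabk}, the stated value of the maximum follows, and the accumulated constraints on $L$ and $C$ force the fixed subfield $F=F_{N,L,K}$ of $\Q_{[L:K]}$ to embed into $(\Q_{m_{\pi'}})^R=F_0$ via the cyclic action of $N_G(K)/L$, which refines $R$ on the $\pi'$-part.

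The principal obstacle will be the sub-step ``(A1)$+$(A2) $\Rightarrow$ $F\hookrightarrow\Q_t$ for some odd $t$'' in the case $2\in\pi'$: $\Q_{m_{\pi'}}$ then has nontrivial $2$-part and contains real subfields like $\Q(\sqrt{2})$ that satisfy \ref{A2} but do not embed in any $\Q_t$ with $t$ odd, so the argument must genuinely exploit the metacyclic structure \eqref{Estructura} and the invariant $r^G$ to exclude such fields from the centers arising as $F_{N,L,K}$. A secondary subtlety is reconciling the abstract quaternion configuration produced by \Cref{PCI2-A1}(4) with the explicit hypothesis $a_2^2\not\in\GEN{b^{4}}$ appearing in \Cref{SSPabk}.\eqref{Deg=2k}, which requires verifying that those two sets of conditions coincide under the constraints already derived on $L$ and $C$.
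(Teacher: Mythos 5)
Your plan for the degree bound matches the paper: use \Cref{PCIOdd-A2} for odd primes and \Cref{PCI2-A1} for $p=2$ to force $L_1=\GEN{a,b^{2k}}\subseteq L$, hence $\Deg(A)\le 2k$, and then distinguish whether $b_2^k\in L$. That part of the proposal is essentially correct.

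However, you have mislocated the difficulty. Your ``principal obstacle'' — handling $2\in\pi'$ — is vacuous: for a finite metacyclic group the smallest prime $p$ dividing $|G|$ always lies in $\pi_G$ (metacyclic groups are supersolvable, so they have a normal Sylow tower, giving a normal Hall $p'$-subgroup for the smallest prime). Hence $2\notin\pi'$ and $m_{\pi'}$ is odd, so \ref{A1} already puts $F$ inside $\Q_t$ with $t=m_{\pi'}$ odd, and \Cref{PCI2-A1} applies unconditionally; the paper remarks this in one line. Similarly, your ``secondary subtlety'' about reconciling the quaternion configuration with $a_2^2\not\in\GEN{b^4}$ does not arise: the paper gets $a_2^2\not\in\GEN{b^{4k}}$ directly from $a_2^2\not\in C$ together with $b_2^{4k}\in C$ (Lemma~\ref{PCI2-A1}(1)), and for odd $k$ this is the same as $a_2^2\not\in\GEN{b^4}$; the quaternion structure in \Cref{PCI2-A1}(4) is used elsewhere, not for this equivalence.

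The genuine gap is the embedding $F\hookrightarrow F_0$. You assert that ``the accumulated constraints on $L$ and $C$ force $F_{N,L,K}$ to embed into $(\Q_{m_{\pi'}})^R$,'' but this is the substantive content of the lemma and does not follow from general principles. Knowing $F\subseteq\Q_{m_{\pi'}}$ gives $F=(\Q_{m_{\pi'}})^S$ for some subgroup $S\le\U_{m_{\pi'}}$, and one must show $R\subseteq S$. The paper does this by pinning down $K$: it constructs an explicit normal subgroup $M$ of $G$ with $M\subseteq\Core_G(K)$ (whose shape depends on whether $a_2^2\in K$, whether $k$ is even, and whether $a_2^2\in\GEN{b^{2k}}$), analyses the $2$-group $L/M$ in each of four cases, shows that $K$ itself is normal in $G$ and that $L/K$ is generated by an explicit element, and only then deduces $F\cong\Q_l^{\Res_l(\gamma)}\subseteq F_0$. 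None of that case analysis is present in your proposal, so as written it does not prove the embedding.
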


\begin{proof}
	By \Cref{SSPMetacyclic}, $S=\Q Ge(G,L,K)$ for a strong Shoda pair $(L,K)$ of $G$ with $a\in L$ and $\Deg(S)=[G:L]$. Let  $C=\Core_G(K)$.
	Observe that $(L,K)$ satisfies the hypothesis of \Cref{PCIOdd-A2} for every $p\in \pi\setminus \{2\}$ and the hypothesis of \Cref{PCI2-A1}, because $2\not\in \pi'$, since the minimal prime dividing $|G|$ is always in $\pi$, and hence $m_{\pi'}$ is odd.
	Therefore $a_{\pi}^4,b^{4k}\in C$ and $b^{2k}\in L$.
	The latter implies that $L_1\subseteq L$.
	Therefore $[G:L]$ divides $[G:L_1]$ and
 $$[G:L_1]=\begin{cases} 2k, & \text{if } 2\mid [L_0:\GEN{a}]; \\ k, & \text{otherwise}.\end{cases}$$
	In view of \Cref{SSPabk}, the maximality of $\Deg(S)$ implies that $L$ is either $L_0$ or $L_1$.
	Therefore $\Deg(S)=[G:L]\in\{k,2k\}$. By \Cref{SSPabk}(\ref{Deg=2k}), if $2\nmid k$, $\epsilon=-1$ and $a_2^2\not\in\GEN{b^{4k}}$, then $\Deg(S)=2k$.
	Conversely, suppose that $\Deg(S)=2k$. Then $b_2^k\not\in L$ and hence $a_2^2\not\in C$ and $2\nmid k$, by \Cref{PCI2-A1}(3). Therefore $\epsilon=-1$ by \Cref{PCI2-A1}(2) and $a_2^2\not\in \GEN{b^{4k}}$, by \Cref{PCI2-A1}(1).
This proof the statement about $\Deg(S)$.

The following observations will be relevant for the remainder of the proof.
By \Cref{Observations}, $[b,a]\in \GEN{a^2}$, and moreover $a_2^2\in K$ if and only if $a_2^2\in C$. By \eqref{Estructura}, $L_0=\GEN{a_{\pi'}}\times \GEN{b_{\pi'}^k}\times \prod_{p\in \pi} \GEN{a_p,b_p^k}$ and in particular $L_0$ is nilpotent.

We consider the following subgroup of $G$:
	$$D=\begin{cases}
	\GEN{a_{\pi}^2,b^{2k}}, & \text{if } a_2^2\in K; \\
	\GEN{a_{\pi}^4,b^{2k}}, & \text{if } a_2^2\not\in K \text{ and } 2\mid k; \\
	\GEN{a_{\pi}^4,b^{4k}}, & \text{otherwise}.\end{cases}$$
By \Cref{PCIOdd-A2} and \Cref{PCI2-A1}, we have that $D\subseteq C$.
Moreover, $D$ is normal in $G$ because $\GEN{a_{\pi}}$ is normal in $G$ and, using \Cref{PropEse}.\eqref{vpRm-1}, it is easy to see that given an integer $j$, $[b^{jk},a]\in \GEN{a_{\pi}^j}$ and, if $k$ is even, then $[b^{jk},a]\in \GEN{a_{\pi}^{2j}}$.
As $\GEN{a_{\pi\setminus \{2\}},b_{2'}^k} \subseteq D\subseteq C \subseteq K\subseteq L \subseteq L_0$, the Hall $2'$-subgroup of $L/D$ is cyclic generated by $a_{2'}D$ and therefore the Hall $2'$-subgroup of $L/K$ is generated by $a_{2'}K$.
Furthermore $(L/K)_2$ is a cyclic quotient of $L/D$.

We consider separately four cases:

\textbf{Case 1}. Suppose that $a_2^2\in K$.

Then the Sylow $2$-subgroup of $L/D$ is elementary abelian of order at most 4 and hence the Sylow $2$-subgroup of $L/K$ has order at most $2$.
Thus $[L:K]\in\{l,2l\}$ with $l\mid m_{\pi'}$.
If $g=a^ib^{kj}$ with $i$ and $j$ integers, then $[a,g]\in \GEN{a_{\pi}^2}\subseteq D$ and
$[b,g]=[b,a^i]=a^{2ix}\equiv a^{2ix}b^{2jkx}\equiv g^{2x} \mod D$ for some integer $x$.
This shows that every subgroup of $L$ containing $D$ is normal in $G$.
In particular, $K$ is normal in $G$.
Recall that $\gamma$ is a generator of $\Inn_G(\GEN{a})$.
 By \Cref{SSPMetacyclic}, $F\cong \Q_l^{\Res_l(\gamma)}\subseteq (\Q_{m_{\pi'}})^R=F_0$, as desired.

	\textbf{Case 2}. Suppose that $a_2^2\not\in K$ and $2\mid k$.

	Then $L/D$ has a cyclic normal Hall $2'$-subgroup (generated by $a_{2'}D$), and its Sylow $2$-subgroups have order dividing $8$ and exponent $4$. If the Sylow $2$-subgroup of $L/D$ is not abelian, then $a_2^2\in L'D\subseteq K$, in contradiction with the hypothesis $a_2^2\not\in K$.
	Thus the Sylow $2$-subgroup of $L/D$ is isomorphic to $C_4$ or $C_4\times C_2$. As $k$ is even, $[b^k,a]\in \GEN{a_\pi^4}\subseteq D$. Hence, arguing as in the previous case it follows that every subgroup of $L$ containing $D$ is normal in $G$. In particular $K$ is normal in $G$ and as $a_2^2\not\in K$ it follows that $a_{\{2\}\cup \pi'}K$ is a generator of $L/K$ and $F\cong \Q_l^{\Res_{l}(\gamma)}$ with $l\mid 4m_{\pi'}$.
	As, by assumption, $F$ can be embedded in $\Q_{m_{\pi'}}$, so does $\Q_l^{\Res_{l}(\gamma)}$ and as both $\Q_{m_{\pi'}}$ and $\Q_l^{\Res_{l}(\gamma)}$ are Galois extensions of $\Q_l$ it follows that $\Q_l^{\Res_{l}(\gamma)}\subseteq (\Q_{m_{\pi'}})^R=F_0$. Thus $F$ can be embedded in $F_0$, as desired.

	\textbf{Case 3}. Suppose that $a_2^2\in \GEN{b^{2k}}\setminus K$ and $2\nmid k$.

	Then $\epsilon=-1$, by \Cref{PCI2-A1}.(2) and
    $\GEN{a_2,b_2}\cong Q_8$, by \Cref{PCI2-A1}.(4). The latter also implies that $a_2^2\not\in \GEN{b^4}$ and hence the conditions of \Cref{SSPabk}.\eqref{Deg=2k} hold. Therefore, $L=\GEN{a,b^{2k}}=\GEN{a,D}$, so that $L/D$ is cyclic generated by $aM$.
	Moreover, the $\pi$-parts of $D$ and $K$ coincide because $a_2^2\not\in K$.
	Therefore $[L:K]=4l$ with $l$ a divisor of $m_{\pi'}$. Now it is easy to see that $K$ is normal in $G$ and arguing as in the previous case we deduce that $F$ is isomorphic to a subfield of $(\Q_{m_{\pi'}})^R=F_0$.

	\textbf{Case 4}.
	In the remaining cases $a_2^2\not\in (\GEN{b^{2k}}\cup K)$ and $2\nmid k$.

As in the previous case $\epsilon=-1$ and $L=\GEN{a,b^{2k}}$.
On the other hand by the definition of $D$ and the assumption $a_2^2 \not\in \GEN{b^{2k}}$ we have that the Sylow $2$-subgroup of $L/D$ is $\GEN{a_2M}\times \GEN{b_2^2M}\cong C_4\times C_2$.
We claim that $K\subseteq \GEN{a^2, b^{2k}}$ and $K$ is normal in $G$.
If the former fails, then $K$ contains an element of the form $g=a_2b^{2kl}$.
Then, $g^2 \equiv a_2^2 b^{4kl} \equiv a_2^2 \mod D$, so $a_2^2\in K$, in contradiction with the hypothesis. Then, $K\subseteq \GEN{a^2, b^{2k}}$. In order to prove that $K$ is normal in $G$ take $g=a^{2i}b^{2kj}$ with $i$, $j$ integers. Then, $[a,g] = [a_\pi, b^{2kj}] \in \GEN{a_{\pi}^4}\subseteq D\subseteq K$ and there is an integer $x$ such that $[b,g] = [b, a^{2i}] = a^{4ix} \equiv a^{4ix}b^{4kjx}\equiv (a^{2i}b^{2kj})^{2x} \equiv g^{2x} \mod D$.
So $[a,g], [b,g]\in \GEN{g, D}\subseteq K$ and then $K$ is a normal subgroup of $G$.
As $L=\GEN{a,b^{2k}}$, $L/K$ is cyclic, the $2'$-Hall subgroup of $L/K$ is generated by $a_{2'}K$ and $a_2^2\not\in K$, the $2$-Sylow subgroup of $L/D$ is isomorphic to $C_4\times C_2$ with $a_2M$ of order $4$, and $K\subseteq \GEN{a^2,b^{2k}}$. It follows that $L/K$ is generated by $\GEN{aK}$ and $[L:K]$ divides $4m_{\pi'}$.
Then we can argue as in the previous cases.
\end{proof}

We are ready to prove the main result of this section:

\begin{proposition}\label{RDetermined}
	If $G$ and $H$ are finite metacyclic groups with $\Q G\cong \Q H$, then $R^G=R^H$ and hence $k^G=k^H$.
\end{proposition}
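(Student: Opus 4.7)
The plan is to recover $R=R^G$ from $\Q G$ by identifying its fixed field $F_0=(\Q_{m_{\pi'}})^R$ as a canonically defined subfield of $\Q_{m_{\pi'}}$. Since $m_{\pi'}$ is already determined by $\Q G$ (Proposition~\ref{piIgual}), the cyclotomic field $\Q_{m_{\pi'}}$ and all its subfields are canonically attached to $\Q G$; once $F_0$ is pinned down, $R=\Gal(\Q_{m_{\pi'}}/F_0)$ follows by Galois correspondence. The same procedure applied to $\Q H$ will then yield $F_0^H=F_0^G$ and hence $R^H=R^G$.

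First I would single out the set $\mathcal{W}$ of Wedderburn components $A$ of $\Q G$ whose center satisfies conditions \ref{A1} and \ref{A2}; this set is visibly an invariant of $\Q G$. Because $Z(A)$ is an abelian extension of $\Q$ and $\Q_{m_{\pi'}}/\Q$ is abelian, any embedding of $Z(A)$ into $\Q_{m_{\pi'}}$ has the same image, so for $A\in\mathcal{W}$ the center $Z(A)$ may be viewed unambiguously as a subfield of $\Q_{m_{\pi'}}$.

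Next I would invoke Lemma~\ref{SSPabk} to exhibit $F_0$ itself as the center of an explicit component of $\mathcal{W}$, of degree $k$ in general (part \eqref{Deg=k}) and also of degree $2k$ in the exceptional configuration of part \eqref{Deg=2k}. Lemma~\ref{SSPMax} complements this by showing that the maximum degree attained by a component in $\mathcal{W}$ is exactly $k$, or $2k$ in the exceptional configuration, and moreover that the center of any such maximum-degree component embeds (as a subfield of $\Q_{m_{\pi'}}$) in $F_0$. Taking the compositum inside $\Q_{m_{\pi'}}$ of the centers of all maximum-degree components of $\mathcal{W}$ then gives a subfield contained in $F_0$ by Lemma~\ref{SSPMax} and containing $F_0$ by Lemma~\ref{SSPabk}, so it equals $F_0$.

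Since this whole construction is intrinsic to $\Q G$, it shows that $F_0$, and hence $R$, is determined by $\Q G$, completing the proof. The main technical obstacle, namely the case analysis comparing an arbitrary strong Shoda pair $(L,K)$ of $G$ to the distinguished pairs $(L_0,K)$ and $(L_1,\overline{K})$, has already been absorbed into Lemmas~\ref{SSPabk} and~\ref{SSPMax}, so at this stage no further difficulty is expected beyond the clean Galois-correspondence synthesis.
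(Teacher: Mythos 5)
Your proposal is correct and follows essentially the same route as the paper: both proofs combine Lemma~\ref{SSPabk} (producing a maximum-degree component of $\Q G$ satisfying \ref{A1}--\ref{A2} with center $F_0$) and Lemma~\ref{SSPMax} (showing every such maximum-degree center embeds in $F_0$), then conclude via the Galois correspondence and the uniqueness of abelian subfields of $\C$. Your compositum of all maximum-degree centers is just a mild repackaging of the paper's two-sided inclusion argument $L_G\subseteq L_H$ and $L_H\subseteq L_G$.
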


\begin{proof}
Suppose that $\Q G\cong \Q H$ and let $L_G=(\Q_{m_{\pi'}})^{R^G}$ and $L_H=(\Q_{m_{\pi'}})^{R^H}$.
Observe that we are abusing the notation, by considering $R^G$ and $R^H$ as subsets of $\Gal(\Q_{m_{\pi'}}/\Q)$, rather than as subsets of $\U_{m_{\pi'}}$, i.e. we are considering the standard isomorphism $\U_{m_{\pi'}}\cong \Gal(\Q_{m_{\pi'}})$ as an equality.
By \Cref{SSPabk} and \Cref{SSPMax}, among the Wedderburn components of $\Q G$ (respectively, $\Q H$) whose center satisfy conditions \ref{A1} and \ref{A2} there is one with maximum degree and center $L_G$ (respectively, $L_H$). Denote those Wedderburn components $S_G$ and $S_H$. As $\Q G\cong \Q H$, $\Deg(S_G)=\Deg(S_H)$.
Then, by \Cref{SSPMax}, $Z(S_G)\cong L_G\subseteq L_H$	and
	$Z(S_H)\cong L_H \subseteq L_G$.
	As $L_G$ and $L_H$ are Galois extensions of $\Q$ it follows that $L_G=L_H$. Then $R^G=\Gal(\Q_{m_{\pi'}}/L_G)=\Gal(\Q_{m_{\pi'}}/L_H)=R^H$, by Galois Theory.
\end{proof}

\section{$\Q G$ determines $s^G$ and $\epsilon^G$}\label{SectionEpsilonDetermined}

In this section we first prove that $s^G$ is determined by $\Q G$ and latter that so is $\epsilon^G$. Recall that we have fixed notation $s=s^G, \epsilon=\epsilon^G, m=m^G, \dots$
In the proof that $s$ is determined by $\Q G$ we work prime by prime, so we fix a prime $p$ and we will prove that $s_p$ is determined by $\Q G$.
As $s_{\pi'}=m_{\pi'}=|G'_{\pi'}|$ and, by \Cref{piIgual}, $G'$ and $\pi'$ are determined by $\Q G$, if $p\in \pi'$, then $s_p$ is determined by $\Q G$. Thus, we may assume that $p\in \pi$.
Recall that $G_p=\GEN{a_p,b_p}$ is a Sylow $p$-subgroup of $G$.
Let
	$$\INV(G_p)=(p^\mu,p^\nu,p^\sigma,\GEN{e+p^\rho}_{p^\mu}).$$
Then $\mu,\nu,\sigma,\rho$ and $e=\epsilon^{G_p}$ satisfy the conditions of \Cref{conditionsA-D}, $G_p$ is given by the following presentation
    $$G_p=\GEN{c,d \mid c^{p^\mu}=1, c^d = c^{e+p^\rho}, d^{p^\nu}=c^{p^\sigma}},$$
and $G_p=\GEN{c}\GEN{d}$ is a minimal metacyclic factorization of $G_p$.
As $p\in \pi$, by \Cref{piIgual}, the isomorphism type of $G_p$ is determined by $\Q G$ and hence so are $\mu,\nu,\sigma,\rho$ and $e$.
Observe that $T_G(\GEN{c})=T_{G_p}(\GEN{c})=\GEN{e+p^{\rho}}_{p^\mu}$, since $p\in \pi$.

To prove that $s_p$ is determined by $\Q G$ we compare, in the proof of \Cref{DeGpAG}, the following metacyclic factorizations of a Sylow subgroup of $G$:
	$$G_p=\GEN{c}\GEN{d}=\GEN{a_p}\GEN{b_p}.$$
	Ultimately we prove that $s_p=p^{\sigma}$. As $\sigma$ is determined by $\Q G$, so is $s_p$. A byproduct of this analysis is that if $e=1$ then $\epsilon=1$. However, if $e=-1$, then $p=2$ and $\epsilon$ might be $1$ or $-1$. The latter only happens in few cases and the idea is to consider two groups $G$ and $H$ in such special situation with $\epsilon^G=1$ and $e=\epsilon^{G_p}=\epsilon^{H_p}=\epsilon^H=-1$ and prove that the Wedderburn decompositions of $\Q G$ and $\Q H$ differ in the number of simple components satisfying the following conditions:
	\begin{itemize}
		\item[\mylabel{B1}{(B1)}] $\Deg(S)=k$
		\item[\mylabel{B2}{(B2)}] The center of $S$ does not contain roots of unity of order $p$ for every $p\in \pi\setminus \{2\}$.
	\end{itemize}
	We illustrate this with the following example:

\begin{example}{\rm
Consider the following groups:
	$$G=\GEN{a,b \mid a^{168}=b^6=1, a^b=a^{157}}, \quad
	H=\GEN{a,b \mid a^{84}=1, b^{12}=a^{42}, a^b=a^{31}}.$$
	Observe that if $c=a^{21}$ and $d=b^3$, with $a,b\in G$, and $c_1=a^{21}$ and $d_1=b^3$, with $a,b\in H$ then
	$$G_2=\GEN{c,d \mid c^4=1, d^4=c^2, c^d=c^{-1}}\cong
	H_2=\GEN{c_1,d_1 \mid c^8=d_1^2=1, c_1^{d_1}=c_1^5}.$$
	Thus $\epsilon^G=1$, while $e=\epsilon^H=-1$.
Moreover
\begin{eqnarray*}
    \Q G &=& 4\Q \oplus 16\Q_3 \oplus 2\Q_4 \oplus 8 \Q_{12} \oplus M_2(\Q_4)
    \oplus  4 M_2(\Q_{12})  \oplus \\
    & & 2(\Q(\zeta_{42}),\sigma_{31}, -1) \oplus 2M_6(\Q_3) \oplus M_6(\Q_{84}^{25}) \oplus \mathbf{2 M_6(\Q) \oplus 2(\Q_7,\sigma_3,-1) \oplus M_6(\Q_{28}^9)} \\
    \Q H &=& 4\Q \oplus 16\Q_3 \oplus 2\Q_4 \oplus 8 \Q_{12} \oplus 2M_2(\Q_4)
    \oplus 4 M_2(\Q_{12}) \oplus  \\
    & & 2M_6(\Q_3)  \oplus M_6(\Q_{12}) \oplus M_6(\Q_{168}^{61})\oplus \mathbf{2 M_6(\Q) \oplus M_6(\Q_{56}^5)}.
\end{eqnarray*}
	In this case $k=6$ and $\pi=\{2,3\}$, so we have to count simple components of degree $6$ not containing a primitive third root of unity. $\Q G$ has five of them, namely two copies of $M_6(\Q)$, two copies of $(\Q_7,\sigma_3,-1)$ and one copy of $M_6(\Q_{28}^{\GEN{9}})$, while, $\Q H$ has only tree copies satisfying the conditions, namely two copies of $M_6(\Q)$ and one copy of $M_6(\Q_{56}^{\GEN{5}})$.}
\end{example}

\begin{lemma}\label{DeGpAG}
	\begin{enumerate}
		\item\label{mnnu} $m_pn_p=p^{\mu+\nu}$ and $p^\mu\mid m_p$.
		\item\label{m=pmu} $p^\mu=m_p$ if and only if $p^\nu=n_p$.
		\item\label{rp} If $p^\mu=m_p$ then $p^\rho=r_p$.
		\item\label{sp} $s_p=p^{\sigma}$.
		\item\label{e1} Suppose that $e^{p-1}=1$.
		Then
		\begin{enumerate}
			\item\label{e12} If $p=2$, then $\epsilon=1$.
			\item\label{e1mr} $\frac{m_p}{r_p}=p^{\mu-\rho}$ and $\exp(G_p)=\frac{m_pn_p}{s_p}=p^{\mu+\nu-\sigma}$.
			\item\label{e1mnomui} If $m_p\ne p^{\mu}$, then $k_p>1$, $\mu\ne 0$, $\rho=\sigma$ and  $k_ps_p>n_p$.
		\end{enumerate}
		\item\label{e-1} Suppose that $e=-1$ and $p=2$. Then
		\begin{enumerate}
			\item\label{e-1eps-1} $\epsilon=-1$ if and only if $m_2=2^{\mu}$.
			\item\label{e-1eps1} If $\epsilon=1$, then $2=n_2=k_2<2^\nu$, $\sigma=1$, $\mu=2$, $m_2=2^{\nu+1}$ and $r_2=2^\nu$.
		\end{enumerate}
	\end{enumerate}
\end{lemma}

\begin{proof}
Recall that $G=\GEN{a}\GEN{b}$ and $G_p=\GEN{c}\GEN{d}$ are minimal metacyclic factorizations. Moreover, $G_p=\GEN{a_p}\GEN{b_p}$ is a metacyclic factorization, $|a|=m$, $[G:\GEN{b}]=s$, $|c|=p^\mu$ and $[G_p:\GEN{d}]=p^\sigma$.
Since $G_p=\GEN{c}\GEN{d}$ is a minimal factorization of $G_p$ $p^{\mu}=|c|\mid |a_p|=m_p$.
Moreover $p^{\mu+\nu}=|G_p|=m_pn_p$ and hence $m_p=p^\mu$ if and only if $n_p=p^\nu$.
This proves \eqref{mnnu} and \eqref{m=pmu}. For the remainder of the proof we distinguish cases.

    \textbf{Case 1}. Suppose that $e^{p-1}=1$.

Then $\mu,\nu,\sigma$ and $\rho$ satisfy the conditions  \ref{rhomu1}, \ref{rhomu2} and \ref{epsilon1} of \Cref{conditionsA-D}.

We first prove that if $p=2$, then $\epsilon=1$.
This is clear, if $\mu\le 1$.
Otherwise $4\mid 2^\rho = [\GEN{c}:G'_2]$.
As $|c|=2^{\mu} \mid m_2$, $[\GEN{a_2}:G'_2]$ is also a multiple of $4$ and hence $\epsilon=1$ (see Remark~\ref{Observations}), as desired. This proves \eqref{e12}.

Moreover $\frac{m_p}{r_p}=|(G')_p|=|(G_p)'|=p^{\mu-\rho}$, since $p\in \pi$.
In particular, if $m_p=p^\mu$, then $r_p=p^\rho$.
This proves \eqref{e1mr} and \eqref{m=pmu} in this case.

Let $g\in G_p$. Then $g=b_p^xa_p^y$ for some positive integers $x$ and $y$. If $a_p^{b_p^x}=a_p^z$, then using \Cref{PropEse}.\eqref{vpEse} and \eqref{Potencia} we deduce that $g^{\frac{m_pn_p}{s_p}} = a^{y\Ese{z}{\frac{m_pn_p}{s_p}}}=1$, as $s_p\mid n_p$ by \Cref{Parameters}.\eqref{Param+}.
As the order of $b_p$ is $\frac{m_pn_p}{s_p}$, this proves that the exponent of $G_p$ is $\frac{m_pn_p}{s_p}$ and a similar argument with $c$ and $d$ shows that the exponent of $G_p$ is $p^{\mu+\nu-\sigma}$.
As we already know that $m_pn_p=p^{\mu+\nu}$ we deduce that $s_p=p^\sigma$.
This proves \eqref{sp} in this case.

We now prove \eqref{e1mnomui}. Assume that $m_p\ne p^\nu$.  Suppose that $k_p=1$ or  $\mu=0$.
Then $[c,a_{\pi'}]=1$ and hence $G=\GEN{a_{p'}c}\GEN{b_{p'}d}$ is a metacyclic factorization of $G$.
As $G=\GEN{a}\GEN{b}$ is a minimal metacyclic factorization, we have $m_p=|a_p|\le |c|=p^{\mu}\le m_p$ in contradiction with the assumption.
Now suppose that $\rho\ne \sigma$. Then $1\le \rho<\sigma\le \mu$, by conditions \ref{rhomu1} and \ref{epsilon1}.
Moreover, $p^{\rho}\mid \frac{m_p}{p^{\mu}}p^{\rho}=r_p$.
As $G_p/\GEN{a_p}$ is cyclic, we have that $a_p=d^yc^z$ with either $p\nmid y$ or $p\nmid z$ and $\GEN{c^{p^{\rho}}}=G'_p\subseteq \GEN{a_p}$.
If $p\mid z$, then $p\nmid y$ and hence $c^{p^\rho}\in \GEN{a_p}=\GEN{dc^x}$ for some $x$.
However $\GEN{dc^x}\cap \GEN{c}=\GEN{c^{p^{\sigma}+x\Ese{1+p^\rho}{p^\nu}}}$ and $\sigma\le \nu$.
Thus $p^{\sigma}+x\Ese{1+p^\rho}{p^\mu}$ is multiple of $p^{\sigma}$, by \Cref{PropEse}.\eqref{vpEse}, so that it does not divide $p^{\rho}$.
Hence $c^{p^\rho}\not\in \GEN{a_p}$, a contradiction.
Therefore $p\nmid z$ and this implies that $\GEN{a_p}=\GEN{d^xc}$ for some integer $0\le x<p^\nu$.
If $x=0$, then $\GEN{c}=\GEN{a_p}$ and hence $m_p=p^{\mu}$.
Suppose otherwise that $x>0$ and let $u=v_p(x)$ and $c^{d^x}=c^{(1+p^\rho)^x}$.
Then $|a_p\GEN{c}|=|d^xc\GEN{c}|=p^{\nu-u}$ and $c^{p^{\rho}}\in (G_p)'\subseteq \GEN{a_p}\cap \GEN{c} = \GEN{(d^xc)^{p^{\nu-u}}}=\GEN{c^{xp^{\sigma-u}+\Ese{(1+p^\rho)^x}{p^{\nu-u}}}}$.
Therefore $v_p(xp^{\sigma-u}+\Ese{(1+p^\rho)^x}{p^{\nu-u}})\le \rho<\sigma=v_p(xp^{\sigma-u})$ and hence $v_p(xp^{\sigma-u}+\Ese{(1+p^\rho)^x}{p^{\nu-u}})=v_p(\Ese{(1+p^\rho)^x}{p^{\nu-u}})=\nu-u$.
Thus again $m_p=|a_p|=p^{\mu}$, in contradiction with the hypothesis. Thus $\rho=\sigma$.
Moreover, $m_p>p^\mu$, by \eqref{mnnu}, and $r_p>p^\rho$ by \eqref{e1mr}.
Therefore, by \eqref{sp}, $s_p=p^\sigma= p^{\rho}<r_p$.
Then $n_p<s_pk_p$, by \Cref{Parameters}.\eqref{Param+}.
This proves \eqref{e1mnomui}.

    \textbf{Case 2}. Suppose that $e=-1$ and $p=2$.

Then $\mu$, $\nu$, $\sigma$ and $\rho$ satisfy the conditions \ref{rhomu1}, \ref{rhomu2} and \ref{epsilon-1} of \Cref{conditionsA-D}.

We claim that $s_2\le 2^\sigma$. By means of contradiction suppose that $s_2>2^{\sigma}$. As $G_2/\GEN{a_2}$ is cyclic, either $\GEN{a_2} = \GEN{d^ic}$ or $\GEN{a_2} = \GEN{dc^{2i}}$ for some integer $i$.
If $G_2$ contains a cyclic normal subgroup contained in $C_{G_2}(a_{2'})$ and of the form $\GEN{d^ic}$ for some integer $i$, then $G=\GEN{a_{2'}d^ic}\GEN{b_{2'}d}$ is a metacyclic factorization of $G$ and therefore $s_2=[G_2:\GEN{b_2}]\le [G_2:\GEN{d}]=2^{\sigma}$, against the assumption.
Thus,  $\GEN{a_2}=\GEN{dc^{2i}}$ for some integer $i$.
Moreover, $\GEN{c^2}=G'_2\subseteq \GEN{dc^{2i}}$. Thus, $\GEN{c^2}=\GEN{dc^{2i}}\cap \GEN{c}=\GEN{(dc^{2i})^{2^\nu}}=\GEN{c^{2^{\sigma}+2i\Ese{-1+2^{\rho}}{2^{\nu}}}}$.
As $\rho\ge 2$ and $\mu\ge 1$, by \Cref{PropEse}.\eqref{vpEse}, $v_2(2i\Ese{-1+2^{\rho}}{2^{\nu}})=v_2(i)+\rho+\nu\ge 3$ and hence $\sigma=v_2(2^{\sigma}+2i\Ese{-1+2^{\rho}}{2^{\nu}})=1$. As $1\le \mu-1\le \sigma=1$, it follows that $\mu=2$ and hence $(dc)^2=d^2$. Thus $|dc|=|d|$ and $\GEN{a}\GEN{b_{2'}dc}$ is a metacyclic factorization of $G$. Then $s_2=[G_2:\GEN{b_2}]\le [G_2:\GEN{dc}]=[G_2:\GEN{d}]=2^\sigma$, since $\GEN{a}\GEN{b}$ is a minimal metacyclic factorization of $G$. This finishes the proof of the claim.

If $\epsilon=-1$, then $G'_2$ has index $2$ both in $\GEN{c}$ and in  $\GEN{a_2}$ and therefore $m_2=2^{\mu}$.
Conversely, if $m_2=2^{\mu}$, then $\epsilon=-1$ by \cite[Lemma~3.2]{GarciadelRioClasification}.
This proves \eqref{e-1eps-1}.

Suppose that $m_2=2^\mu$.
Since $G_2=\GEN{a_2}\GEN{b_2}$ is a metacyclic factorization and $G_2=\GEN{c}\GEN{d}$ is a minimal metacyclic factorization with $|a_2|=m_2=2^\mu=|c|$, we have $2^\sigma=[G_2:\GEN{d}]\le [G_2:\GEN{b_2}]=s_2$. Then $s_2=2^\sigma$, by the claim above. This completes the proof of \eqref{sp}.

Suppose that $m_2=2^\mu$ and $r_2\ne 2^\rho$.
Then $\Res_{m_2}(\GEN{\gamma})_2=\GEN{-1+r_2}_{m_2}$ and $T_{G_2}(\GEN{c})=\GEN{-1+2^\rho}_{m_2}$.
By \Cref{Parameters}.\eqref{Param-}, $s_2\ne r_2n_2$ and, by condition \ref{epsilon-1} of \Cref{conditionsA-D}, $\sigma\ne \rho+\nu$.
By \cite[Lemma~3.3]{GarciadelRioClasification}, the hypothesis $r_2\ne 2^\rho$ implies that $4$ divides $n_2$, $8$ divides $m_2$, $k_2<n_2$, $s_2=2^\sigma=2^{\mu-1}$ and $m_2$ divides both $2r_2$ and $2^{\rho+1}$.
Since both $r_2$ and $2^\rho$ divide $m_2$ and they are different it follows that either $m_2=r_2$ or $\rho=\mu$. This contradicts either \Cref{Parameters}.\eqref{Param-} or condition~\ref{epsilon-1}(b).
This completes the proof of \eqref{rp}.

Suppose $\epsilon=1$. We still need to prove that $s_2=2^\sigma=2$, $k_2=2=n_2$, $\mu=2$, $m_2= 2^{\nu+1}$ and $r_2=2^\nu$.
By (4a) we have $2^\mu<m_2$.
Then $[a_{\pi'},c]\ne 1$ for otherwise $\GEN{a_{2'}c}\GEN{b}$ is a metacyclic factorization of $G$ and, as $G=\GEN{a}\GEN{b}$ is a minimal metacyclic factorization, we have that $|a|=m\le |a_{2'}c|=m_{2'}2^\mu$, so that $m_2\le 2^\mu$, a contradiction.
Therefore, $k_2\ne 1$.
On the other hand, $\GEN{c^2}=G'_2\subseteq \GEN{a_2}\cap \GEN{c}= \GEN{a_2^{|a_2\GEN{c}|}}$ and therefore $[a_{\pi'},c^2]=1$. Suppose that $\GEN{a_2} = \GEN{d^i c}$ for some integer $i$ and let $u=v_2(i)$.
Then $u<\nu$ for otherwise $a_2\in \GEN{c}$ and hence $m_2=|a_2|\le |c|=2^\mu$, a contradiction.
Moreover, $|a_2\GEN{c}|=2^{\nu-u}$ and $\GEN{c^2}=G'_2\subseteq \GEN{c}\cap \GEN{a_2} = \GEN{a_2^{2^{\nu-u}}} = \GEN{c^{2^\sigma\frac{i}{2^u}+\Ese{(-1+2^\rho)^i}{2^{\nu-u}}}} \subseteq \GEN{c^2}$, since $1\le \sigma$ and $2\mid \Ese{(-1+2^\rho)^i}{2^{\nu-u}}$, by \Cref{PropEse}.\eqref{vpEse}.
Thus $2^{\mu-1}=|c^2|=|a_2^{2^{\nu-u}}|=m_2 2^{u-\nu}$. Therefore $m_2<2^{\mu}=m_22^{1+u-\nu}\le m_2$, a contradiction.
Therefore $\GEN{a_2}=\GEN{dc^{2i}}$, for an integer $i$.
Then $[d,a_{\pi'}]=[c^{-2i},a_{\pi'}]=1$.
Moreover, $|a_2\GEN{c}|=2^{\nu}$ and $a_2^{2^{\nu}}=c^{2^{\sigma}+2i\Ese{-1+2^{\rho}}{2^{\nu}}}$.
Since $2\le v_2(2i\Ese{-1+2^{\rho}}{2^{\nu}})$, $\sigma\ge \mu-1\ge 1$ and $c^2\in \GEN{a_2^{2^{\nu}}}$, necessarily $\sigma=1$ and $\mu=2$.
Then $m_2=|a_2|=2^{\nu+1}$, $n_2=2$ and $r_2=2^{\nu}$.
As $2\le k_2\le n_2$, $k_2=2$.
Moreover, $|d|=|dc|=2^{\nu+1}$ and $|G_2|=2^{\nu+2}$.
Therefore $\exp(G_2)=|d|$.
Then $G=\GEN{a}\GEN{b_{2'}d}$ is a minimal metacyclic factorization, so that $s_2=2=2^{\sigma}$.
This proves \eqref{e-1eps1} and finishes the proof of the lemma.
\end{proof}

The first statement of the next Proposition shows that $s^G$ is determined by $\Q G$. The remaining statements will be used in the proof of \Cref{EpsilonDetermined}, which shows that $\epsilon^G$ is determined by $\Q G$.

\begin{proposition}\label{sDetermined}
	Let $G$ and $H$ be metacyclic finite groups such that $\Q G\cong \Q H$ and let $p\in\pi$.
	Then
	\begin{enumerate}
		\item $s^G=s^H$.
		\item If $(k^G)_p=1$, then $(m^G)_p=(m^H)_p=m^{G_p}$ and $(r^G)_p=(r^H)_p = r^{G_p}$.
		\item If $\epsilon^{G_{\pi}}=1$, then $\epsilon^G=\epsilon^H=1$.
		\item If $\epsilon^{G_p}=1$ and either $m^{G_p}=1$ or $r^{G_p}<s^{G_p}$, then $(m^G)_p=(m^H)_p=m^{G_p}$ and $(r^G)_p=(r^H)_p=r^{G_p}$.
		\item If $\epsilon^{G_2}=-1$ and, at least one of $s^{G_2}\ne 2$, $n^{G_2}=(k^G)_2$ or $m^{G_2}\ne 4$ holds, then $\epsilon^G=\epsilon^H=-1$, $(m^G)_2=(m^H)_2$ and $(r^G)_2=(r^H)_2$.
	\end{enumerate}
\end{proposition}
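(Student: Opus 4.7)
The plan is to exploit two facts already established: by \Cref{piIgual}, the Sylow $p$-subgroups of $G$ and $H$ are isomorphic for every $p\in\pi$, so the tuple $\INV(G_p)=(p^\mu,p^\nu,p^\sigma,\GEN{e+p^\rho}_{p^\mu})$ is shared; and by \Cref{RDetermined}, $R^G=R^H$, hence $k^G=k^H$ and in particular $(k^G)_p=(k^H)_p$ for every prime $p$. Consequently, for each $p\in\pi$ the parameters $\mu,\nu,\sigma,\rho,e$ and $(k^G)_p$ are invariants of the isomorphism class of $\Q G$, and every conclusion will reduce to a case analysis using \Cref{DeGpAG}, which translates this ``local'' information into the ``global'' invariants $m^G_p,n^G_p,s^G_p,r^G_p,\epsilon^G$.

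For (1), since $s^G_{\pi'}=(m^G)_{\pi'}$ is already determined, it suffices to handle $p\in\pi$; and $s^G_p=p^\sigma=s^{G_p}$ by \Cref{DeGpAG}(1), which depends only on $\INV(G_p)$. For (2), the hypothesis $(k^G)_p=1$ rules out both mechanisms by which $m_p$ could exceed $p^\mu$: when $e^{p-1}=1$, \Cref{DeGpAG}(3c) asserts $m_p\ne p^\mu\Rightarrow k_p>1$; when $e=-1$ and $p=2$, parts (4a)--(4b) together give $m_2\ne 2^\mu\Rightarrow k_2=2$. Hence $m^G_p=p^\mu=m^{G_p}$, and then $r^G_p=p^\rho=r^{G_p}$ by \Cref{DeGpAG}(2).

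For (3), the structure \eqref{Estructura} shows that $G_\pi$ is the direct product of the Sylow subgroups $G_p$ with $p\in\pi$; hence $\epsilon^{G_\pi}=\epsilon^{G_2}$ if $2\in\pi$, and $\epsilon^{G_\pi}=1$ trivially otherwise. In the first case $\epsilon^{G_\pi}=1$ forces $e=1$ in $G_2$, and \Cref{DeGpAG}(3a) gives $\epsilon^G=1$; in the second case $2\in\pi'$ (or $2\nmid|G|$) forces $\epsilon^G=1$ directly from the definition, since the $2$-part of $\GEN{a}$ then lies in the center of $G$. For (4), $\epsilon^{G_p}=1$ is equivalent to $e^{p-1}=1$, so \Cref{DeGpAG}(3c) applies: $m_p\ne p^\mu$ would force $\rho=\sigma$, which is incompatible with each of the hypotheses $m^{G_p}=1$ (giving $\mu=0$) and $r^{G_p}<s^{G_p}$ (giving $\rho<\sigma$); therefore $m^G_p=p^\mu$ and then $r^G_p=p^\rho$ as before.

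For (5), $\epsilon^{G_2}=-1$ places us in Case~2 of \Cref{DeGpAG}; were $\epsilon^G=1$, part (4b) would simultaneously force $s^{G_2}=2$, $m^{G_2}=4$ and $n^{G_2}=2^\nu>2=(k^G)_2$, contradicting each of the three listed alternatives $s^{G_2}\ne 2$, $n^{G_2}=(k^G)_2$, $m^{G_2}\ne 4$. Hence $\epsilon^G=-1$, and then (4a) yields $m^G_2=2^\mu$, after which \Cref{DeGpAG}(2) gives $r^G_2=2^\rho$. All arguments apply verbatim to $H$, so the claimed equalities follow. The only conceptual point requiring explicit verification is the identification $\epsilon^{G_\pi}=\epsilon^{G_2}$ used in~(3); everything else is a careful accounting of the cases already recorded in \Cref{DeGpAG}.
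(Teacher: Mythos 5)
Your proof is correct and follows essentially the same route as the paper: everything reduces to Lemma~\ref{DeGpAG} (plus Propositions~\ref{piIgual} and~\ref{RDetermined} to transfer the local invariants and $k$), and the paper's own proof simply says statements (2)--(5) are direct consequences of that lemma. Your write-up just makes the case analysis explicit (including the small point that $\epsilon^{G_\pi}=\epsilon^{G_2}$, which follows from the nilpotency of $G_\pi$ and the decomposition~\eqref{Estructura}), which the paper leaves implicit.
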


\begin{proof}
Suppose that $\Q G\cong \Q H$. Then $\pi=\pi_G=\pi_H$, $\pi'=\pi'_G=\pi'_H$ and $G_{\pi}\cong H_{\pi}$, by \Cref{piIgual}.
Thus $\INV(G_p)=\INV(H_p)=(p^\mu,p^\nu,p^\sigma,\GEN{e+p^\rho}_{p^\mu})$ for every $p\in \pi$.

(1) By \Cref{DeGpAG}, $(s^G)_p=(s^H)_p=p^{\sigma}$ for every $p\in \pi$. Since, moreover $(s^G)_{\pi'}=(m^G)_{\pi'} = (G')_{\pi'} = (H')_{\pi'} = (m^H)_{\pi'} = (s^H)_{\pi'}$, we conclude that $s^G=s^H$.

Let $p\in \pi$. Recall that $k^G=k^H$, by \Cref{RDetermined}, which we denote $k$ from now on.

(2) If $k_p=1$ then $m^G_p=m^H_p=p^\mu$ by statements \eqref{e1mnomui} and \eqref{e-1} of \Cref{DeGpAG}, and hence $(r^G)_p=(r^H)_p=p^\rho$, by \Cref{DeGpAG}.\eqref{rp}.

(3) is a direct consequence of Lemma~\ref{DeGpAG}.\eqref{e12}.

(4) follows from Lemma~\ref{DeGpAG}.\eqref{e1mnomui}.

(5) is a consequence of statements \eqref{e-1} and \eqref{rp} of Lemma~\ref{DeGpAG}.
\end{proof}

\begin{proposition}\label{EpsilonDetermined}
Let $G$ and $H$ be finite metacyclic groups. If $\Q G\cong \Q H$, then $\epsilon^G=\epsilon^H$.
\end{proposition}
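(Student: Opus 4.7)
I would argue by case analysis on the invariants of the Sylow 2-subgroup $G_2\cong H_2$ (ensured by \Cref{piIgual}), writing $\INV(G_2)=(2^\mu,2^\nu,2^\sigma,\GEN{e+2^\rho}_{2^\mu})$. If $2\notin\pi$, both $\epsilon^G$ and $\epsilon^H$ equal $1$ trivially. If $\epsilon^{G_2}=1$ (i.e.\ $e=1$), then since $\epsilon^{G_p}=1$ automatically for every odd $p\in\pi$, the Hall $\pi$-subgroup satisfies $\epsilon^{G_\pi}=1$, and \Cref{sDetermined}(3) yields $\epsilon^G=\epsilon^H=1$.

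Henceforth assume $\epsilon^{G_2}=-1$. If any one of $s^{G_2}\neq 2$, $n^{G_2}=(k^G)_2$, or $m^{G_2}\neq 4$ holds, then \Cref{sDetermined}(5) gives $\epsilon^G=\epsilon^H=-1$ directly. Moreover, if $(k^G)_2=1$, then \Cref{sDetermined}(2) forces $(m^G)_2=m^{G_2}=4$, so $\epsilon^G=-1$ by \Cref{DeGpAG}(4a); by \Cref{RDetermined} we also have $(k^H)_2=1$, so $\epsilon^H=-1$ for the same reason.

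The residual case is $\epsilon^{G_2}=-1$, $s^{G_2}=2$, $m^{G_2}=4$ and $1<(k^G)_2<n^{G_2}$. Combined with the constraints of \Cref{conditionsA-D} (notably \ref{epsilon-1}(b)), this forces $\mu=\rho=2$, $\sigma=1$ and $\nu\geq 3$, so $G_2\cong H_2\cong Q_{2^{\nu+2}}$ is the generalized quaternion group. By \Cref{DeGpAG}(4a), $\epsilon^G\in\{\pm 1\}$ corresponds respectively to $(m^G)_2\in\{4,2^{\nu+1}\}$, and both values can structurally occur. The distinguishing feature is whether the conjugation action of $G_2$ on $\GEN{a_{\pi'}}$ kills an order-$4$ element lying outside the maximal cyclic subgroup $\GEN{d}\subset G_2$: in the case $\epsilon^G=-1$ such an element is the global $a_2$ itself, whereas in the case $\epsilon^G=1$ the only order-$4$ elements available in the kernel are $d^{\pm 2^{\nu-1}}\in\GEN{d}$.

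The plan for this residual case is to extract the dichotomy from the Wedderburn decomposition of $\Q G$. Using \Cref{SSPMetabelian} together with the explicit description \eqref{WCMetacyclic} of Wedderburn components coming from strong Shoda pairs, one isolates a component of $\Q G$ whose degree or whose center detects whether an order-$4$ element of $G_2\setminus\GEN{d}$ centralizes $\GEN{a_{\pi'}}$. Such a component should be present in exactly one of the two cases, thereby recovering $\epsilon^G$ from $\Q G$ alone. The main obstacle is precisely isolating this Wedderburn component and verifying that the distinguishing feature is intrinsic to the isomorphism class of $\Q G$, independent of any choice of minimal metacyclic factorization of $G$.
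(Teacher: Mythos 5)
The easy reductions are correct and align with the paper: your use of \Cref{piIgual}, \Cref{sDetermined}(2),(3),(5), \Cref{RDetermined} and \Cref{DeGpAG}(4a) correctly isolates the residual scenario, which is precisely the situation the paper enters into via its proof by contradiction (namely, both groups have $\INV(G_2)=\INV(H_2)=(4,2^{\nu},2,\GEN{-1}_4)$ with $2=(k^G)_2=(k^H)_2<2^{\nu}$, while one group has $\epsilon=1$ and the other $\epsilon=-1$). Your qualitative picture of the dichotomy --- whether $a_2$ lands inside or outside the maximal cyclic subgroup of the Sylow $2$-subgroup --- is essentially right, although two of your identifications are off: the Sylow $2$-subgroup in this residual case is the modular maximal-cyclic group $M_{2^{\nu+2}}$ rather than the generalized quaternion group (a quick check: with $c^d=c^{-1}$ one has $d^c=c^{-2}d=d^{2^{\nu}+1}$, not $d^{-1}$), and the bound is $\nu\ge 2$, not $\nu\ge 3$ (the constraint in \Cref{conditionsA-D}.\ref{epsilon-1}(b) applies when $8\mid m^{G_2}$, which fails here since $m^{G_2}=4$; the inequalities in that item as printed appear reversed).

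The substantive issue is that the residual case is not actually resolved: you explicitly stop at ``the main obstacle is precisely isolating this Wedderburn component and verifying that the distinguishing feature is intrinsic to the isomorphism class of $\Q G$.'' This is the entire content of the paper's argument, and it is where all the work is. The paper does not isolate a single detecting Wedderburn component; it instead counts all Wedderburn components with $\Deg=k$ whose center contains no $p$-th root of unity for $p\in\pi\setminus\{2\}$ (conditions \ref{B1}, \ref{B2}), parametrizes them via strong Shoda pairs through a delicate case analysis (Claims 1 and 2 of the paper's proof, using \Cref{PCIOdd-A2}, \Cref{SSPMetabelian}, and a conjugacy analysis of cocyclic subgroups of $L_2$), and derives the explicit counts $N_G=d(\nu+2)+d_1$ and $N_H=d(2\nu+1)+d_1$, which differ since $d\ge 1$ and $\nu\ge 2$. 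Your proposal contains none of this combinatorics, so while your reductions and heuristic point in the right direction, the proof has a genuine gap at the decisive step.
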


\begin{proof}
By means of contradiction, assume that $\Q G\cong \Q H$ and, without loss of generality suppose that $\epsilon^G=1$ and $\epsilon^H=-1$.
By \Cref{sDetermined}, $s^G=s^H$, which we denote $s$ and by \Cref{RDetermined}, $k^G=k^H$, which we denote $k$.
Recall that $G_2\cong H_2$, by \Cref{piIgual}.
By \Cref{DeGpAG}.\eqref{e12}, $\epsilon^{G_2}=\epsilon^{H_2}=-1$, and by \Cref{DeGpAG}.\eqref{e-1eps1},  $\INV(G_2)=\INV(H_2)=(4,2^{\nu},2,\GEN{-1}_4)$, with $\nu\ge 2$, $(m^G)_2=2^{\nu+1}$, $(n^G)_2=k_2=(s^G)_2=2$,
$(r^G)_2=2^{\nu}$, $(m^H)_2=(r^H)_2=4$ and $(n^H)_2=2^{\nu}$.

In the remainder of the proof, $E$ is either $G$ or $H$ and $E=\GEN{a}\GEN{b}$ is a minimal metacyclic factorization of $E$.
Moreover, we adopt the notation $m=m^E$, $n=n^E$, etc.
Since $m_2\in \{r_2,2r_2\}$ and $k_2=2$, it follows that $b_2^k\in Z(E)$ and $[b^k,a_2]=1$.

We are going to compute the number of simple components $S$ of $\Q E$ satisfying conditions \ref{B1} and \ref{B2}.

Set $L=\GEN{a,b^k}=C_E(a_{\pi'})=C_E(a_{\pi'\cup \{2\}})$.
Observe that $L$ is nilpotent and $L_p=\GEN{a_p,b_p^k}$ for every prime $p$.

By \Cref{SSPMetacyclic}, the Wedderburn components of $\Q E$ satisfying condition \ref{B1} are those of the form $\Q Ee(E,L,K)$ with $K$ a subgroup of $E$ such that \begin{equation}\label{SSPCondicion}
L \text{ is maximal in } \{B\le E : C_E(a)\le B, B'\le K \le B\} \text{ and }
L/K \text{ is cyclic}
\end{equation}
Observe that the maximality condition on $L$ is equivalent to $[b^k,a]\in K$ but $[b^{\frac{k}{p}},a]\not\in K$ for any $p\in \pi(k)$.

For every subgroup $K$ of $L$ satisfying \eqref{SSPCondicion} let $S_K=\Q Ee(E,L,K)$.

\medskip
\noindent\underline{Claim 1}. The subgroups $K$ of $L$ satisfying \eqref{SSPCondicion} and such that $S_K$ satisfies condition \ref{B2} are precisely those of the form $L_{\pi\setminus\{2\}}\times K_{\pi'}\times K_2$  with
\begin{itemize}
    \item[\mylabel{KB1}{(KB1)}] $K_{\pi'}$ is a cocyclic subgroup of $L_{\pi'}$;
    \item[\mylabel{KB2}{(KB2)}] $K_2$ a cocyclic subgroup of $L_2$; and
    \item[\mylabel{KB3}{(KB3)}] $[b^{\frac{k}{p}},a]\not\in K$ for every prime $p\mid k$.
\end{itemize}
Indeed, suppose that $K$ satisfies  \eqref{SSPCondicion} and $S_K$ satisfies condition \ref{B2}. By \Cref{PCIOdd-A2}, we have that $L_{\pi\setminus \{2\}}\subseteq \Core_G(K)$ and therefore  $K=L_{\pi\setminus \{2\}}\times K_{\pi'} \times K_2$ satisfies conditions \ref{KB1}-\ref{KB3}.
Conversely, let $K=L_{\pi\setminus \{2\}}\times K_{\pi'}\times K_2$ satisfy conditions \ref{KB1}-\ref{KB3}. Then $(L,K)$ satisfies \eqref{SSPCondicion} and hence $S_K=\Q Ge(E,L,K)$ is a simple component of $\Q E$. On the other hand by \Cref{SSPMetacyclic}, the center of $S_K$ is isomorphic to a field contained in $\Q_{[L:K]}$ and $\pi([L:K])\subseteq \pi'\cup \{2\}$. Therefore $S_K$ satisfies condition \ref{B2}. This finishes the proof of the claim.

As $s_2=2$, $k_2=2$ and $\nu\ge 2$,
$$L_2=\GEN{a_2,b_2^2} =
\begin{cases} \GEN{a_2}\cong C_{2^{\nu+1}}, & \text{if } E=G; \\
\GEN{a_2^{-1}b_2^{2^{\nu-1}}}\times \GEN{b_2^2}\cong C_2 \times C_{2^\nu}, & \text{if } E=H.\end{cases}$$
Therefore, if $E=G$, then every subgroup of $L_2$ is normal in $E$.
Otherwise, i.e. if $E=H$, then $E'_2$ is the socle of $\GEN{b_2^2}$ and hence the only subgroups of $L_2$ which are not normal in $H$ are $\GEN{a_2^{-1}b_2^{2^{\nu-1}}}$ and $\GEN{a_2b_2^{2^{\nu-1}}}$. Moreover, these two subgroups are conjugate in $E$.

\underline{Claim 2}: Let $K=L_{\pi\setminus \{2\}}\times K_{\pi'}\times K_2$ satisfy conditions \ref{KB1} and \ref{KB2}. Then $K$ satisfies condition \ref{KB3} if and only if one of the following conditions hold:
\begin{enumerate}
    \item $[b^{\frac{k}{p}},a_{\pi'}]\not\in K_{\pi'}$ for every $p\in \pi(k)$.

    \item $[b^{\frac{k}{p}},a_{\pi'}]\not\in K_{\pi'}$ for every $p\in \pi(k)\setminus \{2\}$ and $[b^{\frac{k}{2}}:a_{\pi'}]\in K_{\pi'}$ and either $E=G$ and $K_2=1$ or $E=H$ and $K_2$ is either $\GEN{a_2^{-1}b_2^{2^{\nu-1}}}$ or $\GEN{a_2b_2^{2^{\nu-1}}}$.
\end{enumerate}

Indeed, clearly, if $K$ satisfies condition (1), then it also satisfies condition \ref{KB3}. Suppose that $K$ satisfies condition (2). Then condition \ref{KB3} holds for every prime $p\ne 2$. Moreover, as $v_2(k)=1$, if $E=G$, then $[b^{\frac{k}{2}},a_2]=a_2^{2^{\nu}}\not\in K_2$ and if $E=H$, then $[b^{\frac{k}{2}},a_2]=a_2^2\not\in K_2$. Therefore $[b^{\frac{k}{2}},a]\not\in K$. Therefore $K$ satisfies condition \ref{KB3}, as desired. Finally suppose $K$ satisfy neither (1) nor (2).
Then $[b^{\frac{k}{p}},a_{\pi'}]\in K_p$ for some $p\in \pi(k)$.
Suppose that $p\ne 2$. Then $[b^{\frac{k}{p}},L_2]=1$ and $[b^{\frac{k}{p}},L_{\pi\setminus \{2\}}]\subseteq \GEN{a_{\pi\setminus \{2\}}}\subseteq K$. Then $[b^{\frac{k}{p}},L]\subseteq K$ and, in particular, $[b^{\frac{k}{p}},a]\in K$.
Therefore \ref{KB3} does not hold.
Suppose $p=2$ and $[b^{\frac{k}{p}},a_{\pi'}]\not\in K_{\pi'}$ for every $p\in \pi(k)\setminus \{2\}$. As condition (2) does not hold, either $E=G$ and $K_2\neq 1$ or $E=H$ and $K_2$ is neither $\GEN{a_2^{-1}b_2^{2^{\nu-1}}}$ nor $\GEN{a_2b_2^{2^{\nu-1}}}$.
In both cases $K_2$ contains $E'_2$ and therefore $[b^{\frac{k}{2}},a_2]\in K$.
As also $[b^{\frac{k}{2}},a_{\pi'}]\in K_{\pi'}$ and $[b^{\frac{k}{2}},a_{\pi\setminus \{2\}}] \in \GEN{a_{\pi\setminus \{2\}}}\subseteq K$,
it follows that $[b^{\frac{k}{2}},a]\in K$.
Therefore condition \ref{KB3} fails. This finishes the proof of Claim 2.

As $L$ is normal in $E$, by \cite[Problem~3.4.3]{JespersdelRioGRG1}, if $K=L_{\pi\setminus\{2\}}\times K_{\pi'}\times K_2$ and $M=L_{\pi\setminus\{2\}}\times M_{\pi'}\times M_2$ are two subgroups of $L$ satisfying conditions \ref{KB1}-\ref{KB3}, then $S_K=S_M$ if and only if $K$ and $M$ are conjugate in $E$ if and only if $K_{\pi'}$ and $M_{\pi'}$ are conjugate in $E$ and $K_2$ and $M_2$ are conjugate in $E$.

Let $d$ denote the number of conjugacy classes of cocyclic subgroups $K_{\pi'}$ of $L_{\pi'}$ which satisfy condition (1), and let $d_1$ denote the number of conjugacy classes of cocyclic subgroups of $L_{\pi'}$ which satisfy the first part of condition (2).
As $R^G=R^H$, $d$ and $d_1$ are independent of $E$.
Let $h$ denote the number of cocyclic subgroups of $K_2$.
Then
$$h=\begin{cases} \nu+2, & \text{if } E=G; \\
2(\nu+1); & \text{if } E=H.
\end{cases}$$
By Claim 2, combined with the discussion about the conjugacy classes in $E$ of subgroups of $L_2$, the number of simple components of $\Q E$ satisfying conditions \ref{B1} and \ref{B2} is
$$N_E=\begin{cases}
dh+d_1=d(\nu+2)+d_1, &  \text{if } E=G \\
d(h-1)+d_1=d(2\nu+1)+d_1, &  \text{if } E=H.
\end{cases}$$
As $\GEN{a_{\pi'}}\cap \GEN{b_{\pi'}}=1$, $K_{\pi}=\GEN{b_{\pi'}^k}$ satisfies condition (1) and hence $d\ge 1$.
Moreover, $\nu\ge 2$, and therefore $N_G<N_H$ and hence $\Q G\not\cong \Q H$, a contradiction.
\end{proof}

\section{$\Q G$ determines $m^G$, $n^G$ and $r^G$}\label{SectionmnrDetermined}

In this section we prove that $m^G$, $n^G$ and $r^G$ are determined by $\Q G$, i.e. we prove the following proposition.

\begin{proposition}\label{mnrDetermined}
Let $G$ and $H$ be finite metacyclic groups such that $\Q G\cong \Q H$. Then $m^G=m^H$, $n^G=n^H$ and $r^G=r^H$.
\end{proposition}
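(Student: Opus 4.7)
The plan is to reduce to a prime-by-prime analysis on $\pi=\pi_G=\pi_H$. By \Cref{piIgual}, the $\pi'$-parts of $m$ and $n$ are already equal for $G$ and $H$, and since $|G_p|=|H_p|$ is determined while $m_p n_p=|G_p|$, it suffices to prove that $(m^G)_p=(m^H)_p$ and $(r^G)_p=(r^H)_p$ for every $p\in\pi$; the corresponding equality of $n_p$ will then follow, and $r=\prod_p r_p \cdot r_{\pi'}$ assembles automatically. Fix $p\in\pi$. By \Cref{piIgual}, $G_p\cong H_p$, so writing $\INV(G_p)=\INV(H_p)=(p^\mu,p^\nu,p^\sigma,\GEN{e+p^\rho}_{p^\mu})$ is unambiguous.

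First I would discharge the easy cases using \Cref{EpsilonDetermined} and \Cref{sDetermined}. If $p=2$ and $e=-1$, \Cref{DeGpAG}(4)(a) says that $\epsilon^G=-1$ if and only if $m^G_2=2^\mu$, and since $\epsilon$ is determined by $\Q G$ this forces $m^G_2=m^H_2$; the value of $r_2$ is then read off from \Cref{DeGpAG}(2) when $\epsilon=-1$ and from \Cref{DeGpAG}(4)(b) when $\epsilon=1$. If $e^{p-1}=1$ and $(k^G)_p=1$, \Cref{sDetermined}(2) gives the conclusion immediately; the special configurations of \Cref{sDetermined}(4) are handled similarly.

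The remaining case, which I expect to be the main obstacle, is $p\in\pi$ with $e^{p-1}=1$ and $k_p>1$. By \Cref{DeGpAG}(3)(c) we then have $\rho=\sigma$ and $k_p s_p>n_p$, so $m_p=p^{\mu+i}$ for some $i\in\{0,\ldots,\nu-\sigma\}$ and the range of possible values is typically larger than one. My strategy will be to locate $m_p$ by a Wedderburn-component count in the spirit of \Cref{EpsilonDetermined}. Concretely, for each candidate value of $m_p$, I would consider subgroups $L=\GEN{a,b^d}$ and cocyclic subgroups $K$ of $L$ (parameterized by \Cref{Cocyclic} applied to the Sylow $p$-part $L_p$), satisfying the conditions of \Cref{SSPMetabelian}, and count the resulting Wedderburn components $\Q Ge(G,L,K)$ whose degree and center are forced by the already-determined invariants $k$, $s$, $\epsilon$, $R$, $|G|$ and $\INV(G_p)$. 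The count should be a strictly monotone function of $m_p$, so equality $\Q G\cong \Q H$ forces $(m^G)_p=(m^H)_p$. Once $m_p$ is known, \Cref{DeGpAG}(3)(b) yields $r_p=m_p\cdot p^{\rho-\mu}$, completing the proof for this prime.

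The main difficulty will be the bookkeeping in the counting argument: identifying exactly which cocyclic subgroups of $L_p$ give strong Shoda pairs normalized appropriately, tracking their $G$-conjugacy classes (again invoking $\{g\in G:ge(G,L,K)=e(G,L,K)\}=\Core_G(K)$ from \Cref{SSPAlg} and the normality criteria used repeatedly in \Cref{SSPMax}), and verifying that the count genuinely separates the remaining candidate values of $m_p$. The quaternion-like exceptions that caused trouble in \Cref{PCI2-A1}(4) and the asymmetric behaviour of $L_2$ when $p=2$ will likely require a separate subcase. Having assembled $m_p$ and $r_p$ for every $p\in\pi$, the equalities $m^G=m^H$, $r^G=r^H$, and $n^G=|G|/m^G=|H|/m^H=n^H$ follow.
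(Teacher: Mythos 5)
Your reductions and your high-level strategy for the hard case coincide with the paper's own proof: discharge $p\in\pi'$ and $e=-1$ via \Cref{piIgual} and \Cref{DeGpAG}, handle $k_p=1$, $\mu=0$ and $\rho<\sigma$ via \Cref{sDetermined}, and in the remaining case ($e^{p-1}=1$, $k_p>1$, $\rho=\sigma<\nu$) count Wedderburn components parameterized by cocyclic $K\le L$ via \Cref{Cocyclic} and \Cref{SSPMetabelian}, tracking conjugacy via $\Core_G(K)$. This is precisely what the paper does. However, the part you flag as ``bookkeeping'' is the entire substance of the proposition, and several of your claims about it are either unverified or not quite right as stated.

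First, $L$ is not an arbitrary $\GEN{a,b^d}$: the paper fixes $L=C_G(a_{\pi'\cup\{p\}})=\GEN{a,b^l}$ with $l=\lcm(k,p^{\mu-\rho})$, and the count $N_G$ is over Wedderburn components meeting the very specific conditions \ref{C1}--\ref{C3} (degree exactly $l$, no $q$-roots of unity for $q\in\pi\setminus\{p,2\}$, and if $p\ne 2$ no $4$th root of unity). Choosing $L$ and the side conditions incorrectly breaks the characterization of the contributing $K$'s (\Cref{Componentsmp}) and the normalizer computation (\Cref{CocyclicLp}), which are needed to count conjugacy classes rather than subgroups. Second, the count is not literally a strictly monotone function of $m_p$: the paper introduces the auxiliary parameter $v=\min(n_p/l_p,p^\rho)$, derives an explicit closed form for $N_G$ in terms of $v$ (formulas $M(d)$, $N(d)$, $f_d$, $h_d$, $g_d$), proves $f_d$ is strictly increasing on the relevant range while $h_d,g_d$ are non-decreasing, and deduces that $v$ is determined; and even then a separate argument is needed to recover $n_p$ from $v$, because $v=\min(n_p/l_p,p^\rho)$ can a priori have two preimages. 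Your proposal asserts the monotonicity rather than establishing it and does not identify the intermediate parameter that actually carries it, so while the strategy is the right one, the proof as written does not close the argument.
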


\begin{proof}
We will be working all the time with the group $G$ and a minimal metacyclic factorization $G=\GEN{a}\GEN{b}$. Recall that we have fixed notation $m=m^G, n=n^G, s=s^G, r=r^G, \epsilon=\epsilon^G, \dots$ and the goal is proving that $m,n$ and $r$ are determined by $\Q G$.
Recall that $k$, $\epsilon$, $s$ and the isomorphism type of $G_{\pi}$ are determined by $\Q G$. This will be used along the proof without specific mention.
We work prime by prime, i.e. we fix a prime $p$ and we have to prove that $m_p,n_p$ and $r_p$ are determined by $\Q G$.
We keep the notation for $\INV(G_p)$ as in the previous section, i.e.
    $$m^{G_p}=p^{\mu}, \quad n^{G_p}=p^{\nu}, \quad r^{G_p}=p^{\rho} \quad e=\epsilon^{G_p}.$$

We first obtain some reductions:
As $(r^G)_{\pi'}=1$ and $(m^G)_{\pi'}$ and $(n^G)_{\pi'}$ are determined by $\Q G$ (\Cref{piIgual}), we may assume that $p\in \pi$.
If $e=-1$, then $p=2$ and, by \Cref{DeGpAG},
$$(m_2,n_2,r_2)=\begin{cases}
(2^{\nu+1},2,2^{\nu}) & \text{if } \epsilon=1; \\ (2^\mu,2^\nu,2^\rho), & \text{if } \epsilon=-1
\end{cases}$$
Thus we may assume that $e=1$ and hence $\epsilon=1$.
Hence $m_pn_p=p^{\mu+\nu}$ and $\frac{m_p}{r_p}=p^{\mu-\rho}$.
Therefore, it is enough to prove that one of the three $m_p$, $n_p$ or $r_p$ is determined by $\Q G$.
For future use we express $m_p$ and $r_p$ in terms of $n_p$:
\begin{equation}\label{DenAmr}
m_p=\frac{p^{\mu+\nu}}{n_p} \qand r_p=\frac{p^{\nu+\rho}}{n_p}.
\end{equation}
Observe that $p^{\mu-\rho}=\frac{m_p}{r_p}\le n_p$, by \Cref{Parameters}.\eqref{Param+}.
If $\mu=0$, then $m_p=1$, by \Cref{sDetermined}. Thus we may assume that $\mu>0$ and hence $\rho>0$. Therefore $m_p\ge r_p>1$.
Let
$$l=\lcm(k,p^{\mu-\rho}) \qand
L=C_G(a_{\pi'\cup \{p\}}).$$
Then
$$L=\GEN{a,b^l}=\GEN{a_{\pi'}}\times \GEN{b_{\pi'}^k} \times \prod_{q\in \pi\setminus \{p\}} \GEN{a_q,b_q^k} \times \GEN{a_p,b_p^{\max(k_p,p^{\mu-\rho})}}.$$
Moreover,
$l_p\le n_p \le p^{\nu}$ and $l_p\le (n^H)_p \le p^{\nu}$.
Therefore, if $l_p=p^\nu$, then $(n^H)_p=n_p$, as desired.
Hence, we may assume that $l_p<p^\nu$.
If $k_p=1$, $\mu=0$ or $\rho<\sigma$, then $(m^H)_p=m_p$, by \Cref{sDetermined}.
Thus we may also assume that $k_p>1$, $\mu>0$ and $\rho=\sigma$.
Therefore $\frac{m_p}{s_p}=p^{\mu-\sigma} \le p^\rho = p^\sigma = s_p$, by \Cref{DeGpAG}.\ref{epsilon1}.
Moreover, $s_p\le n_p\le p^\nu$ and $s_p\le (n^H)_p\le p^\nu$, by \Cref{Parameters}.\eqref{Param+}.
Therefore, if $\rho=\nu$, then $(n^H)_p=n_p$, as desired.
Hence, we also may assume that $\rho<\nu$.
By \Cref{DeGpAG}, if $n_p\ne p^\nu$, then $n_p<p^\nu$, hence $m_p>p^\mu$, so that $r_p>p^\rho=s_p$ and thus $n_p<k_ps_p=k_pp^{\rho}$, by \Cref{Parameters}.\eqref{Param+}.
This proves that
$n_p=p^\nu$ or $n_p<\min(p^\nu,p^\rho k_p)$.

Summarizing, in the remainder of the proof we assume the following:
\begin{enumerate}
\item[\mylabel{U1}{(U1)}] $e=\epsilon=1$.
\item[\mylabel{U2}{(U2)}] $k_p>1$, $0<\mu\le 2\rho$, $1\le \rho=\sigma<\nu$, $l_p<p^\nu$, $(s^H)_p=s_p=p^\rho$  and $\max(l_p,p^\rho)\le n_p$.
\item[\mylabel{U3}{(U3)}] Either $n_p=p^\nu$ or
$n_p<\min(p^\nu,p^\rho k_p)$. In particular, if $n_p\ge l_pp^\rho$, then $n_p=p^\nu$.
\end{enumerate}

	The strategy is similar to the one in the previous section, namely we analyze how are the Wedderburn components of $\Q G$ of a certain kind.
	In this case, we consider Wedderburn components $S$ of $\Q G$ satisfying the following conditions:
	\begin{itemize}
		\item[\mylabel{C1}{(C1)}] $\Deg(S)=l$.
		\item[\mylabel{C2}{(C2)}] The center $F$ of $S$ does not contain roots of unity of order $q\in \pi\setminus \{p,2\}$.
		\item[\mylabel{C3}{(C3)}] If $p\ne 2$, then $F$ does not contain a root of unity of order $4$.
	\end{itemize}
	We denote by $N_G$ the number of Wedderburn components of $\Q G$ satisfying conditions \ref{C1}-\ref{C3}.
	We will obtain a formula for $N_G$ and use it to prove that $N_G$ determines $(n^G)_p$.
	As $N_G$ is determined by $\Q G$, this will show that so is $(n^G)_p$ as desired.

We illustrate this with the following example.

\begin{example}{\rm
Consider the groups
$$G=\GEN{a,b \mid a^{63}=1, b^{72}=a^{21}, a^b=a^{58}}, \quad
H=\GEN{a,b \mid a^{189}=1, b^{24}=a^{63}, a^b=a^{37}}.$$
For these groups $\pi'=\{7\}$, $\pi=\{2,3\}$, $k=l=3$, $G_2\cong H_2\cong C_8$ and $G_3\cong H_3 = \GEN{a,b\mid a^9=1, b^9=a^3, a^b=a^4}$.
So $\INV(G_2)=(1,2^3,1,\GEN{1}_1)$ and $\INV(G_3)=\GEN{3^2,3^2,3,\GEN{4}_9}$ and conditions (U1)-(U3) only hold for $p=3$. Moreover, the two groups take the two possible values for $n_3$, namely $n_3^G=3^2$ and $n_3^H=3$.
We now compute the Wedderburn decomposition of their rational group algebras:
\begin{eqnarray*}
	\Q G&=& 2\Q \oplus 8 \Q_3 \oplus \Q_4 \oplus \Q_8 \oplus 6 \Q_9 \oplus 4 \Q_{12} \oplus 4 \Q_{12} \oplus 4 \Q_{24} \oplus 3 \Q_{36} \oplus 3\Q_{72} \oplus \\
	&& \mathbf{2M_3(\Q_9) \oplus 2M_3(\Q_7^{2}) \oplus 2M_3(\Q_{21}^{4}) \oplus
	6 (\Q_{21},\sigma_4,\zeta_3) \oplus 2M_3(\Q_{63})}\oplus \\
	&& M_3(\Q_{72})\oplus M_3(\Q_{252})\oplus M_3(\Q_{504})\oplus M_3(\Q_{28}^{9}) \oplus M_3(\Q_{56}^{9}) \oplus M_3(\Q_{84}^{25}) \oplus \\
	&& M_3(\Q_{168}^{25})\oplus 3(\Q_{84}/\Q_{84}^{25},\zeta_3)
	\oplus 3(\Q_{168}/\Q_{168}^{25},\zeta_3) \\
	\Q H &=& 2\Q \oplus 8 \Q_3 \oplus \Q_4 \oplus \Q_8 \oplus 6 \Q_9 \oplus 4 \Q_{12} \oplus 4\Q_{24} \oplus 3\Q_{36} \oplus 3\Q_{72} \oplus \\
	&& \mathbf{2M_3(\Q_9) \oplus 2M_3(\Q_7^{2}) \oplus 2M_3(\Q_{21}^{4}) \oplus 2M_3(\Q_{63}^{37}) \oplus 2M_3(\Q_{189}^{37})} \oplus \\
	&&M_3(\Q_{36}) \oplus M_3(\Q_{72}) \oplus M_3(\Q_{28}^{9}) \oplus
	M_3(\Q_{56}^{9}) \oplus M_3(\Q_{84}^{25}) \oplus
	M_3(\Q_{168}^{25}) \oplus \\
	&& M_3(\Q_{252}^{37}) \oplus
	M_3(\Q_{504}^{289}) \oplus M_3(\Q_{756}^{37}) \oplus
	M_3(\Q_{1512}^{793}).
\end{eqnarray*}
As $\pi=\{2,3\}$ all the components satisfy condition \ref{C2}.
In both cases, the simple components in the first line have degree $1$ and the remaining components have degree $l=3$, however only the components in the second line satisfy condition \ref{C3}. So $n_G=14$ and $n_H=10$. }
\end{example}

We start characterizing the Wedderburn components of $\Q G$ satisfying conditions \ref{C1}-\ref{C3} for a fixed prime $p$ in terms of some subgroups of $L$.

\begin{lemma}\label{Componentsmp}
The Wedderburn components of $\Q G$ satisfying conditions \ref{C1}-\ref{C3} are the algebras of the form $S_K=\Q Ge(G,L,K)$ for a subgroup $K$ of $L$ satisfying the following conditions:
\begin{itemize}
\item[\mylabel{KC1}{(KC1)}] $K_{\pi\setminus \{p,2\}}=L_{\pi\setminus \{p,2\}}$.
\item[\mylabel{KC2}{(KC2)}] $K_{\pi'}$ and $K_p$ are cocyclic subgroups of $L_{\pi'}$ and $L_p$, respectively.
\item[\mylabel{KC3}{(KC3)}] $[b^{\frac{l}{q}},a_{\pi'}]\not\in K_{\pi'}$, for every $q\in \pi(k)\setminus \{p\}$.
\item[\mylabel{KC4}{(KC4)}] If $[b^{\frac{l}{p}},a_{\pi'}]\in K_{\pi'}$, then $[b^{\frac{l}{p}},a_p]\not\in K_p$.
\item[\mylabel{KC5}{(KC5)}] If $p\ne 2$, then $K_p$ is a subgroup of $L_p$ of index at most $2$.
\end{itemize}

	If $K_1$ and $K_2$ are subgroups of $L$ satisfying \ref{KC1}-\ref{KC4}, then $S_{K_1}=S_{K_2}$ if and only if $K_1$ and $K_2$ are conjugate in $G$.
\end{lemma}

\begin{proof}
By \Cref{SSPMetacyclic},  the Wedderburn components of $\Q G$ satisfying condition \ref{C1} are those of the form $S_K=\Q G e(G,L,K)$ with $K$ a cocyclic subgroup of $L$ such that $L$ is maximal in $\{B\le G: C_G(a)\subseteq B, B'\le K\le B\}$.
As $L$ is nilpotent, a subgroup $K$ of $L$ is cocyclic in $L$ if and only if $K_{\pi'}$, $K_{\pi\setminus \{p,2\}}$, $K_p$ and $K_2$ are cocyclic in $L_{\pi'}$, $L_{\pi\setminus \{p,2\}}$, $L_p$ and $L_2$ respectively.
Moreover, by \Cref{PCIOdd-A2}, if $S_K$ satisfies \ref{C2}, then $K$ satisfies \ref{KC1}. Conversely, if condition \ref{KC1} holds, then $\pi([L:K])\subseteq \pi'\cup \{p,2\}$ and as the center of $S_K$ is isomorphic to a subfield of $\Q_{[L:K]}$, $S$ satisfies condition \ref{C2}.
A similar argument, using \Cref{PCI2-A1}, shows that $S_K$ satisfies condition \ref{C3} if and only if $[L_2:K_2]\le 2$, because if $p\ne 2$, then $L_2=\GEN{a_2,b_2^k}$ and hence $L_2/\GEN{a_2^2,b_2^{2k}}$ is an elementary abelian $2$-group.
Observe that if conditions \ref{KC1} and \ref{KC5} hold and $q\in \pi(k)\setminus \{p\}$, then $[b^{\frac{l}{q}},a_{\pi\setminus \{p\}}]\in K$. Then $L$ is maximal in $\{B\le G: C_G(a)\subseteq B, B'\le K\le B\}$ if and only if for every $q\mid l$, $[b^{\frac{l}{q}},a_{\pi'\cup \{p\}}]\not\in K$, and using that $\GEN{a}$ is normal in $G$, it is easy to see that this is equivalent to the combination of conditions \ref{KC3} and \ref{KC4}. This finishes the proof of the first statement of the lemma. The last one is a direct consequence of \cite[Problem~3.4.3]{JespersdelRioGRG1} (see also \cite[Proposition~1.4]{OlivieridelRioSimon2006}).
\end{proof}

Our next goal is describing the cocyclic subgroups of $L_p$ and their normalizers in $G$. To that end we introduce the following positive integers:
$$v=\min\left(\frac{n_p}{l_p},p^\rho\right), \quad  u=\frac{|L_p|}{v}=\frac{p^{\mu+\nu}}{vl_p}, \quad t=\frac{p^{\nu+2\rho}}{v^2l_p}.$$
\begin{remarks}\label{uvt}
\begin{enumerate}
	\item\label{vrtuvt} $u \le up^{2\rho-\mu}=vt$.

	\item\label{v=prho} If $v=p^\rho$, then $n_p=p^\nu$, $r_p=p^\rho$ and $v=r_p\le t\le u$.

	\item \label{vNoprho} If $v\ne p^\rho$, then $v<u$ and $v\le \frac{r_p}{p}\le \frac{t}{p^2}$.

	\item\label{o=kvut} If $k_p\le p^{\mu-\rho}$, then $v<u$ and $u\ge t$.
\end{enumerate}
\end{remarks}

\begin{proof}
\eqref{vrtuvt} By \ref{U2}, $\mu\le 2\rho$. Thus
$vt=\frac{p^{\nu+2\rho}}{vl_p} = \frac{p^{\mu+\nu}}{vl_p}p^{2\rho-\mu}=up^{2\rho-\mu}\ge u$.

\eqref{v=prho} Suppose that $v=p^\rho$.
Then $n_p\ge l_pp^\rho$ and hence $n_p=p^\nu$, by \ref{U3}.
Then, by \eqref{DenAmr},
$r_p=p^\rho=v\le \frac{n_p}{l_p}=\frac{p^\nu}{l_p}=t\le \frac{p^{\mu+\nu-\rho}}{l_p}=u$.

\eqref{vNoprho} Suppose that $v\ne p^\rho$.
Then, as $n_p\le p^\nu$, we have $v=\frac{n_p}{l_p}< p^\rho \le p^\mu \le \frac{p^{\mu+\nu}}{n_p}=u$, and using \eqref{DenAmr} and that $l_p\le n_p$ and  $n_p<l_pp^\rho$, it follows that $v=\frac{n_p}{l_p}\le p^{\rho-1}\le \frac{r_p}{p}
\le \frac{l_pp^\rho}{n_p} \frac{r_p}{p^2}=\frac{l_pp^{\nu+2\rho}}{n_p^2p^2}=\frac{t}{p^2}$.

\eqref{o=kvut} Assume that $k_p\le p^{\mu-\rho}$. Then $l_p=p^{\mu-\rho}$.
By means of contradiction suppose that $v\ge u$. Then $v=p^\rho$, by \eqref{vNoprho}, so $n_p=p^\nu$, by \eqref{v=prho}. As $\rho<\nu$, by \ref{U2}, we get $p^{2\rho}=v^2\ge vu = \frac{p^{\mu+\nu}}{l_p}=p^{\nu+\rho} > p^{2\rho}$, a contradiction.
Again by means of contradiction, suppose that $t>u$. Then, by \eqref{v=prho}, $v=\frac{n_p}{l_p}<p^{\rho}$, by \ref{U2}, $n_p\ge p^\rho$, and, as $l_p=p^{\mu-\rho}$, it follows that $t=\frac{p^{\rho+\mu+\nu}}{n_p^2}\le \frac{p^{\mu+\nu}}{n_p}=u$, a contradiction.
\end{proof}

\begin{lemma}\label{CocyclicLp}
	Set
	$$g=\begin{cases} a_p, & \text{if } n_p\le l_pp^\rho; \\
	b_p^{l_p}, & \text{otherwise};\end{cases} \qand
	h=\begin{cases} b_p^{l_p}a_p^{-\frac{l_pp^\rho}{n_p}}, & \text{if } n_p\le l_pp^\rho; \\
	b_p^{p^{\nu-\rho}}a_p^{-1}, & \text{otherwise}.\end{cases}$$

	Then
	\begin{enumerate}
		\item\label{LpStructure} $L_p=\GEN{g}\times \GEN{h}$, $G'_p=\GEN{a_p^{r_p}} \subseteq \GEN{g}$, $|g|=u$ and  $|h|=v$.

		\item\label{LpCocyclic} Let
		$$C_{L_p} = \left\{(i,y,x) : i \in \{1,2\},\quad  1\le x\le y \qand \begin{cases} y\mid v, & \text{if } i=1; \\
		y\mid u, p\mid x \text{ and } p\mid y\mid vx, & \text{if } i=2 \end{cases} \right\}.$$
		Then
		$$(i,y,x)\mapsto K_{i,y,x}=\begin{cases} \GEN{gh^x,h^y}, & \text{if } i=1; \\
		\GEN{g^xh,g^y}, & \text{if } i=2;\end{cases}$$
		defines a bijection from $C_{L_p}$ to the set of cocyclic subgroups of $L_p$.

		\item\label{NormalizerK12} If $(i,y,x)\in C_{L_p}$, then $N_G(K_{i,y,x}) = \begin{cases} \GEN{a,b^{\frac{y}{t}}} ; & \text{if } i=2 \text{ and } y\ge t; \\ G; & \text{otherwise}.\end{cases}$.
	\end{enumerate}
\end{lemma}

\begin{proof}
\eqref{LpStructure} $L_p$ is an abelian group generated by $a_p$ and $b_p^{l_p}$,  $l_p=\max(k_p,p^{\mu-\rho})=\max(k_p,\frac{m_p}{r_p})\le n_p$, by \eqref{DenAmr}, and
$s_p=p^\rho\le r_p$ by \ref{U2}. Then $\GEN{b_p}\cap \GEN{a_p}=\GEN{b_p^{l_p}}\cap \GEN{a_p}=\GEN{b_p^{n_p}}=\GEN{a_p^{p^{\rho}}}\supseteq \GEN{a_p^{r_p}}=(G_p)'=G'_p$.

Suppose first that $n_p\le l_pp^\rho$. Then, since $G$ is given by the presentation in \eqref{Presentation}, we have $|b_p^{l_p}|=\frac{n_p}{l_p}\frac{m_p}{p^\rho}\le m_p=|a_p|$,  $\left( b_p^{l_p}a_p^{-\frac{l_pp^\rho}{n_p}}\right)^{\frac{n_p}{l_p}}=1$ and
$\GEN{a_p}\cap \GEN{b_p^{l_p}a_p^{-\frac{l_pp^\rho}{n_p}}}=1$.
Therefore
$L_p=\GEN{a_p}\times \GEN{b_p^{l_p}a_p^{-\frac{l_pp^\rho}{n_p}}} = \GEN{g}\times \GEN{h}$, $|h|=|b_p^{l_p}a_p^{-\frac{l_pp^\rho}{n_p}}|=\frac{n_p}{l_p}=v$ and $m_p=|g|=\frac{|L_p|}{v}=u$.

Otherwise, $n_p>l_pp^{\rho}\ge k_pp^\rho$
and hence $n_p=p^\nu$, by \ref{U3}.
Then $(b_p^{p^{\nu-\rho}}a_p^{-1})^{p^\rho}=1$,  $|g|=|b_p^{l_p}|=\frac{p^{\mu+\nu-\rho}}{l_p}>p^\mu\ge p^\rho$ and, as $|b_p^{p^{\nu-\rho}}a_p^{-1}\GEN{b_p}|=s_p=p^{\sigma}= p^\rho$, by \ref{U2}, it follows that
$\GEN{g}\cap \GEN{h}=\GEN{b_p^{l_p}}\cap \GEN{b_p^{p^{\nu-\rho}}a_p^{-1}}=1$.
Therefore
$L_p=\GEN{g}\times \GEN{h}$, $|h|=p^\rho=v$ and $|g|=\frac{|L_p|}{v}=u$.

\eqref{LpCocyclic} follows at once from \Cref{Cocyclic}.

\eqref{NormalizerK12} Here we use \Cref{Cocyclic} and \Cref{uvt} without specific mention.
Fix an integer such that $2\le w\le m_p+1$ such that $a_p^{b_p}=a_p^w$.
As $\Res_{m_p}(\gamma)=\GEN{1 + r_p}_{m_p}$, we have $v_p(w-1)=v_p(r)\ge \rho$.
Let $(i,y,x)\in C_{L_p}$.
As $[a,L_p]=1$ and $[b_{p'},G_p]=1$, $a,b_{p'}\in N_G(K_{i,y,x})$.
Therefore $N_G(K_{i,y,x})=\GEN{a,b^{p^{\delta}}}$ for some positive integer $\delta$.

Suppose first that $n_p\le l_pp^\rho$.
Then $v=\frac{n_p}{l_p}$, $g^b=g^w$  and $h^b=g^{-(w-1)\frac{l_pp^\rho}{n_p}}h$.
Suppose that $i=1$. Then $y\mid v$, $1\le x\le y$, $[h^{y},b] = h^{-y} (h^b)^{y} = g^{-(w-1)y \frac{l_pp^\rho}{n_p}}\in \GEN{g^{\frac{y}{x_p}}} \subseteq K_{1,y,x}$ and
$[gh^x,b] = g^{(w-1)(1-x\frac{l_pp^\rho}{n_p})} \in \GEN{g^{w-1}}\subseteq \GEN{g^{y}}\subseteq K_{1,y,x}$ because $v_p(w-1)=v_p(r)\ge \rho \ge v_p(n)-v_p(l)= v_p(v)\ge  v_p(y)$.
Thus $K_{1,y,x}$ is normal in $G$, as desired.
Suppose that $i=2$. Thus $y\mid u$ and $\max(p,\frac{y}{v})\mid x$.
Then $[g^{y},b]\subseteq \GEN{g^{y}}\subseteq K_{2,y,x}$.
Therefore  $\delta$ is the minimal non-negative integer satisfying $[g^xh,b^{p^\delta}]\in \GEN{g^y}$.
Using \eqref{Potencia}, we have $[g^xh,b^{p^\delta}] =
g^{x(w^{p^\delta}-1)-(w-1)\frac{l_pp^\rho}{n_p}\Ese{w}{p^\delta}} =
g^{(w-1)\Ese{w}{p^\delta}(x-\frac{l_pp^\rho}{n_p})}$.
On the other hand, $y\mid y \frac{l_pp^\rho}{n_p} = \frac{y p^{\rho}}{v} \mid r x$ and hence $y\mid (w-1)x$.
Therefore $g^{(w-1)\Ese{w}{p^\delta}x}\in \GEN{g^y}=\GEN{g}\cap K_{2,y,x}$, and thus, using \eqref{DenAmr}, we deduce that $[g^xh,b^{p^\delta}]\in \GEN{g^y}$ if and only if
$g^{(w-1)\Ese{w}{p^\delta}\frac{l_pp^\rho}{n_p}}\in \GEN{g^y}$ if and only if
$y\mid p^{\delta} r_p \frac{l_pp^\rho}{n_p}=\frac{p^{\delta+\nu+2\rho}l_p}{n_p^2}=t p^{\delta}$ if and only if $y\mid t$ or $\frac{y}{t}\mid p^{\delta}$.
Therefore $N_G(K_{2,y,x})=\GEN{a,b^{\max(1,\frac{y}{t})}}$, and \eqref{NormalizerK12} follows at once from this equality.

Suppose otherwise that $n_p>l_pp^\rho$. Then $v=p^\rho=r_p$ and $n_p=p^\nu$ so that $\nu-\rho>v_p(l_p)$. Hence $\GEN{a_p^{w-1}}=\GEN{a_p^{r_p}}=\GEN{b_p^{p^\nu}}=\GEN{g^{\frac{p^\nu}{l_p}}}=\GEN{g^t} \subseteq \GEN{g^{p^\rho}}$.
Moreover, $g^b=g$ and $h^b=a_p^{1-w}h$.
Suppose that $i=1$. Then $y\mid v$, $[h^y,b]=a_p^{(1-w)y}\in \GEN{g^{y\frac{p^{\nu}}{l_p}}}$ and
$[gh^x,b]=a_p^{x(1-w)} \in \GEN{g^{p^\rho}} \subseteq \GEN{g^y} \subseteq K_{1,y,x}$.
Therefore $K_{1,y,x}$ is normal in $G$, as desired.
Suppose now that $i=2$. Therefore $y\mid u$. Since $[g^y,b]=1$,
$N_G(K_{2,y,x})=\GEN{a,b^{p^\delta}}$ with $\delta$ as in the previous paragraph, i.e.  the minimal non-negative integer with $[g^xh,b^{p^{\delta}}]\in \GEN{g^y}$.
As $\GEN{[g^xh,b^{p^{\delta}}]}=\GEN{a_p^{(1-w)^{p^{\delta}}}}=\GEN{g^{tp^\delta}}$ it follows that $p^\delta=\max(1,\frac{y}{t})$. Thus $N_G(K_{2,y,x})=\GEN{a,b^{\max(1,\frac{y}{t})}}$.
\end{proof}

If $p\ne 2$ and $K$ is a subgroup of $L$, then $K$ satisfies conditions \ref{KC1}-\ref{KC5} if and only $K_{2'}$ satisfies \ref{KC1}-\ref{KC4} and $[L_2:K_2]\le 2$.
In that case $K_2$ is normal in $G$.
On the other hand, if $P$ be a subgroup of $L_{\pi'}$, then $P\subseteq L_{\pi'} \subseteq Z(L)$ and therefore $N_G(P)=\GEN{a,b^d}$ for some $d\mid l$.
We use this and \Cref{CocyclicLp} to classify the subgroups $K$ of $L$ satisfying conditions \ref{KC1}-\ref{KC4} as follows:
For each $d\mid l$ denote
	$$\mathcal{K}_{d} = \{\text{Cocyclic subgroups } P \text{ of } L_{\pi'} \text{ with } N_G(P)=\GEN{a,b^d} \text{ and } [b^{\frac{l}{q}},a_{\pi'}]\not\in P \text{ for every } q\in \pi(l)\setminus \{p\}\},$$
	$$\mathcal{K}_{d,1} = \{P \in \mathcal{K}_d : [b^{\frac{l}{p}},a_{\pi'}]\not\in P\}	\qand
\mathcal{K}_{d,2} = \{P \in \mathcal{K}_d : [b^{\frac{l}{p}},a_{\pi'}]\in P\}.$$

\begin{remark}\label{K1no1}
Observe that $\GEN{b_{\pi'}^l}\in \mathcal{K}_{1,1}$ because $b_{\pi'}^k\in Z(G)$, $\GEN{a_{\pi'}}\cap \GEN{b_{\pi'}}=1$ and if $q\in \pi(l)$, then $[b^{\frac{l}{q}},a_{\pi'}]\in \GEN{a_{\pi'}}\setminus \{1\}$.
\end{remark}

A subgroup $K$ of $L$ satisfies \ref{KC1}-\ref{KC4} if and only if $K_{\pi\setminus \{p,2\}}=L_{\pi\setminus \{p,2\}}$, $K_{\pi'}\in \mathcal{K}_d$ for some $d\mid l$,  $K_p=K_{(i,y,x)}$ for some $(i,y,x)\in C_{L_p}$ and if $K_{\pi'}\in \mathcal{K}_{d,2}$, then $[b^{\frac{l}{p}},a]\not\in K$. In that case, by \Cref{CocyclicLp}, we have
\begin{equation}\label{NGPQ}
N_G(K)
= \begin{cases}
\GEN{a,b^{\lcm(d,\frac{y}{t})}}, & \text{if } i=2 \text{ and } y>t;\\
\GEN{a,b^d}, & \text{otherwise}.
\end{cases}
\end{equation}
Combining this information with \Cref{Componentsmp} and having in mind that the number of conjugates of $K$ in $G$ is $[G:N_G(K)]$, we obtain the following formula for the  number $N_G$ of Wedderburn components of $\Q G$ satisfying conditions \ref{C1}-\ref{C3}.

\begin{equation}\label{NG}
N_G=O\sum_{d\mid l} (|\mathcal{K}_{d,1}|M(d) + |\mathcal{K}_{d,2}|N(d)).
\end{equation}
where
$$O = \begin{cases}
\text{number of subgroups of } L_2 \text{ of index at most } 2, & \text{if } p\ne 2;  \\
1, & \text{if } p=2.\end{cases}$$
and $M(d)$ and $N(d)$ are defined as follows:
first let
\begin{eqnarray*}
	M_1 &=& \text{number of elements } (1,y,x) \text{ in } C_{L_p} \qand \\
	N_1 &=& \text{number of elements } (1,y,x) \text{ in }  C_{L_p} \text{ with } [b_p^{\frac{l}{p}},a_p]\not\in K_{2,y,x}.
\end{eqnarray*}
Then for each $y\mid v$ let
\begin{eqnarray*}
	M'_y &=& \text{number of elements } (2,y,x) \text{ in }  C_{L_p} \qand \\
	N'_y &=& \text{number of elements } (2,y,x) \text{ in }  C_{L_p} \text{ with } [b_p^{\frac{l}{p}},a_p]\not\in K_{2,y,x}, \\
\end{eqnarray*}
Finally set
$$M_2 = \sum_{y\mid t} M'_y, \quad N_2 = \sum_{y\mid t} N'_y$$
and
$$M(d) = \frac{M_1+M_2}{d} +
\sum_{y,pt\mid y\mid u} \frac{M'_y}{\lcm\left(d,\frac{y}{t}\right)} \qand
N(d) = \frac{N_1+N_2}{d} +
\sum_{y,pt\mid y\mid u} \frac{N'_y}{\lcm\left(d,\frac{y}{t}\right)}.$$

The next goal consists in expressing $M(d)$ and $N(d)$ in terms of $d$ and $v$. Clearly,
	$$M_1= \frac{pv-1}{p-1}; \quad M'_y= \begin{cases} \frac{y}{p}, & \text{if } p\mid y \mid v; \\ v, & \text{otherwise}; \end{cases} \qand M_2= \sum_{p\mid y \mid v} \frac{y}{p} + \sum_{pv\mid y \mid t} v = \frac{v-1}{p-1}+v\log_p\left(\frac{t}{v}\right).$$
Moreover, by \Cref{uvt}, $v\mid t$ and therefore if $pt\mid y$, then $M'_y=v$. Thus
\begin{eqnarray*}
	\sum_{pt\mid y \mid u} \frac{M'_y}{\lcm\left(d,\frac{y}{t}\right)} &=&
	\frac{v}{d_{p'}} \left(\sum_{pd_pt\mid y \mid u} \frac{t}{y}+ \sum_{pt\mid y \mid \min(td_p,u)}\frac{1}{d_p} \right) \\
	&=&
	\begin{cases}
		\frac{vt}{d_{p'}u} \sum_{z\mid \frac{u}{ptd_p}} z + \frac{v}{d}\sum_{pt\mid y \mid td_p}1, & \text{if } td_p<u; \\
		\frac{v}{d}\sum_{pt\mid y \mid u} 1, & \text{otherwise};
	\end{cases}	\\
	&=&
	\begin{cases}
		\frac{vt}{d_{p'}u} \frac{\frac{u}{td_p}-1}{p-1}+ \frac{v}{d}(1+\log_p(d_p)), & \text{if } td_p<u; \\
		\frac{v}{d}\log\frac{u}{t}, & \text{if } t< u \le td_p; \\
		0, & \text{if } u\le t;\end{cases}	\\
	&=&
	\begin{cases}
		\frac{v}{du}\left( \frac{u-d_pt}{p-1}+ (1+\log_p(d_p))\right), & \text{if } td_p<u; \\
		\frac{v}{d}\log\frac{u}{t}, & \text{if } t\le u \le td_p; \\
		0, & \text{if } u<t.
	\end{cases}	.
\end{eqnarray*}
Therefore, if $td_p<u$, then
\begin{eqnarray*}
	dM(d)  &=& \frac{pv-1}{p-1} + \frac{v-1}{p-1} + v\log_p\left(\frac{t}{v}\right) +
	\frac{v}{u}\left( \frac{u-d_pt}{p-1}+ (1+\log_p(d_p))\right) \\
	&=& 1+(p+1) \frac{v-1}{p-1} +
	v\log_p\left(\frac{p^{\nu+2\rho}}{v^3l_p}\right) + \frac{l_pv^2}{p^{\mu+\nu}} \left( \frac{\frac{p^{\mu+\nu}}{vl_p}-d_p\frac{p^{\nu+2\rho}}{v^2l_p}}{p-1}+ (1+\log_p(d_p))\right)\\
	&=& 1+(p+1) \frac{v-1}{p-1} +
	v(\nu+2\rho-v_p(v^3l))+ \frac{v-d_pp^{2\rho-\mu}}{p-1} +
	\frac{l_pv^2}{p^{\mu+\nu}}(1+\log_p(d_p)),
\end{eqnarray*}
if $t\le u \le td_p$, then
    $$dM(d)=\frac{pv-1}{p-1} + \frac{v-1}{p-1} + v\log_p\left(\frac{t}{v}\right)+v\log_p\left(\frac{u}{t}\right)=1+(p+1) \frac{v-1}{p-1} + v(\mu+\nu-v_p(v^2l)),$$
and, if $u<t$, then
	$$dM_d=\frac{pv-1}{p-1} + \frac{v-1}{p-1} + v\log_p\left(\frac{t}{v}\right)=1+v(p+1)+v\log$$
Summarizing, we obtain the following formula for $M(d)$
	\begin{equation}\label{M}
	M(d)=\frac{1}{d}(f_d(v)+h_d(v))
	\end{equation}
where
	$$(f_d(v),h_d(v)) = \begin{cases}
	\left(v\left(\frac{p+2}{p-1} +
	\nu+2\rho-v_p(v^3l)\right),
	\frac{l_pv^2}{p^{\mu+\nu}}(1+\log_p(d_p))-\frac{2+d_pp^{2\rho-\mu}}{p-1}\right), & \text{if } td_p<u; \\
	\left( v\left( \frac{p+1}{p-1} + \mu+\nu-v_p(v^2l)\right),-2\right), & \text{if } t\le u \le td_p; \\
	(v\left(\frac{p+1}{p-1} +
	\nu+2\rho-v_p(v^3l)\right),0), & \text{if } u<t. \end{cases}
	$$

With the aim to obtain a formula for $N(d)$ we first prove the following claim:
    $$N_1=0, \qand N'_y = \begin{cases} v, & \text{if } k_p\le p^{\mu-\rho} \text{ and } y=u; \\ 0, & \text{otherwise}.\end{cases}$$
This is clear if $k_p>p^{\mu-\rho}$, because in that case $[b_p^{\frac{l}{p}},a_p]=1$.
So, suppose that $k_p\le p^{\mu-\rho}$. Then $1<l_p=p^{\mu-\rho}=\frac{m_p}{r_p}$.
Moreover, $v<u\ge t$, by \Cref{uvt}.\eqref{o=kvut}.
The first implies that $|[b_p^{\frac{l}{p}},a_p]|=|a_p^{r_p\frac{m_p}{pr_p}}|=p$ and from $v<u$ we get $N'_u=|\{x\ge 1 : \frac{u}{v}\mid x \le u\}|=v$.
Let $(i,y,x)\in C_{L_p}$.
Using $[G,a_p]=\GEN{a_p^{r_p}}\subseteq \GEN{g}$ and \Cref{Cocyclic}, it follows that $[b_p^{\frac{l}{p}},a_p]\not\in K_{i,y,x}$ if and only if $\GEN{a_p^{r_p}}\cap K_{i,y,x}=1$ if and only if $\GEN{g}\cap K_{i,y,x}=1$,  if and only if either $(i,y,x_p)=(1,u,1)$ or $(i,y)=(2,u)$.
However, if $i=1$ and $y=u$, then $u=y\mid v<u$, a contradiction.
Thus, $N_1=0$ and if $y\ne u$, then $N'_y=0$. This finishes the proof of the claim.

Having in mind that $\frac{u}{t}=p^{\mu-2\rho}v$, the previous claim yields the following formula for $N(d)$:
\begin{equation}\label{N}
N(d) = g_d(v) =
\begin{cases}
\frac{v}{d}, & \text{if } k_p\le p^{\nu-\rho} \text{ and } d_p\ge \frac{u}{t};\\
\frac{1}{d_{p'}}p^{2\rho-\mu}, & \text{if } k_p\le p^{\nu-\rho} \text{ and }
d_p< \frac{u}{t}; \\ 0, & \text{otherwise}.
\end{cases}
\end{equation}
Combining \eqref{NG}, \eqref{M} and \eqref{N} we obtain
\begin{equation}\label{NGv}
N_G=O\sum_{d\mid l} \left(\frac{|\mathcal{K}_{d,1}|}{d}(f_d(v)+h_d(v))+|\mathcal{K}_{d,2}|g_d(v)\right).
\end{equation}
Now observe that $h_d$ and $g_d$ are increasing functions for $v>0$.
Moreover, a straightforward computation shows that
	$$f_d(pv)-f_d(v) = \begin{cases} v(p-1)(\nu-v_p(vl)+2(\rho-v_p(v))), & \text{if } t\le u;\\
v(p-1)(\nu-v_p(vl)+2(\rho-v_p(v))-1), & \text{otherwise}. \end{cases}$$
If $v$ is a proper divisor of $\min\left(\frac{p^\nu}{l_p},p^\rho\right)$, then $\nu-v_p(vl)+2(\rho-v_p(v))-1>0$ and hence the previous calculation shows that  $f_d(pv)>f_d(v)$.
These shows that $f_d(v)$ is a increasing function on the set of divisors of $v$ of  $\min\left(\frac{p^\nu}{l_p},p^\rho\right)$.
This, together with formula \eqref{NGv} and the facts that $O>0$, $\mathcal{K}_{1,1}>0$ (see \Cref{K1no1}), $\mathcal{K}_{1,2}\ge 0$
and $h_d(v)$ and $g_d(v)$ are non-decreasing functions for $v>0$ shows that $v$ is determined by $N_G$ and, hence it is determined by $\Q G$.

To complete the proof of \Cref{mnrDetermined}, it only remains to show that $v$ determines $n_p$, and for that it is enough to show that $n_p$ is the unique positive integer satisfying the following conditions: $q\mid p^\nu$, $v=\min\left(\frac{q}{l_p},p^\rho\right)$ and, if $v=p^\rho$, then $q=p^\nu$. Indeed, observe that $n_p$ satisfies these conditions by \Cref{uvt}.
By means of contradiction let $q$ satisfy the conditions with $n_p\ne q$.
Then $\min(q,n_p)\ne p^\nu$  and hence $v<p^\rho$.
Thus $\min\left(\frac{\max(q,n_p)}{l_p},p^{\rho}\right) = v = \frac{\min(q,n_p)}{l_p}< \frac{\max(q,n_p)}{l_p}$ and hence $v=\min\left(\frac{\max(q,n_p)}{l_p},p^{\rho}\right)=p^\rho$, a contradiction.
This finishes the proof of \Cref{mnrDetermined}.
\end{proof}

\section{$\Q G$ determines $\Delta^G$}\label{SectionDeltaDetermined}

In this section we complete the proof of \Cref{Main} by proving the following proposition.

\begin{proposition}\label{DeltaDetermined}
    Let $G$ and $H$ be finite metacyclic groups such that $\Q G\cong \Q H$. Then $\Delta^G=\Delta^H$.
\end{proposition}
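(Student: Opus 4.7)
By \Cref{piIgual}, \Cref{RDetermined}, \Cref{sDetermined}, \Cref{EpsilonDetermined} and \Cref{mnrDetermined}, the invariants $m, n, s, r, \epsilon, k$ coincide between $G$ and $H$, and the Sylow $p$-subgroups of $G$ and $H$ are isomorphic for every $p \in \pi$. Since $m'$ is defined via \eqref{m'} purely in terms of these data, the value of $m'$ for $G$ equals the value for $H$, and we denote this common value simply by $m'$. Thus the task reduces to proving $\Delta^G = \Delta^H$ as cyclic subgroups of $\U_{m'}$, which by Galois theory is equivalent to showing that $F^G := (\Q_{m'})^{\Delta^G}$ and $F^H := (\Q_{m'})^{\Delta^H}$ coincide as subfields of $\C$. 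The plan is to realize $F^G$ as the center of a specific Wedderburn component of $\Q G$ that can be singled out intrinsically from the isomorphism class of $\Q G$.

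The natural candidate is a strong Shoda pair of $G$ of the form $(L, K)$ with $K = \GEN{a^{m'}}$ and $L \supseteq \GEN{a}$ chosen minimally so that $L/K$ is cyclic of order $m'$, generated by $aK$, and maximal abelian in $N_G(K)/K$. If $L = \GEN{a}$ already satisfies the maximality condition, we take it; otherwise we enlarge $L$ by adjoining a power of $b$ that lies in the centralizer of $L/K$ in $G/K$. In either case, the action of $N_G(K)/L$ on $L/K$ corresponds, via the canonical isomorphism $\Aut(L/K) \cong \U_{m'}$, to a restriction map whose image is exactly $\Delta^G$; by \Cref{SSPAlg}, the center of $A := \Q G\,e(G, L, K)$ is then $F^G$, and its degree is $[G:L]$. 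Verifying the existence and the asserted properties of this pair requires a case analysis organized by the piecewise definition of $m'$ in \eqref{m'}, relying on the structural constraints of \Cref{conditionsA-D}.

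With $A$ in hand, the remaining step is to characterize it intrinsically among the Wedderburn components of $\Q G$ through properties preserved by algebra isomorphisms: its degree $[G:L]$, the containment $Z(A) \subseteq \Q_{m'}$, and, if needed, maximality of $[Z(A):\Q]$ among components satisfying these constraints. If a single such property does not suffice, a counting argument in the spirit of those in \Cref{EpsilonDetermined} and \Cref{mnrDetermined}---comparing the number of Wedderburn components whose center lies in specified subfields of $\Q_{m'}$ for a prescribed degree---will do. Once $A$ is singled out, its center, viewed as a subfield of $\C$, must coincide with the center of the corresponding component of $\Q H$, yielding $F^G = F^H$, hence $\Delta^G = \Delta^H$. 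Combined with \Cref{Invariants} and the previous propositions, this completes the proof of \Cref{Main}. The main obstacle is expected to be the case analysis establishing that the strong Shoda pair $(L, K)$ can always be arranged with $[L:K] = m'$ and the prescribed image of the action, particularly when $\epsilon = -1$ and $p = 2$, where the formula for $m'_2$ has multiple branches.
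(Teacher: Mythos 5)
Your high-level plan is the right one—realize a field determined by $\Delta^G$ as the center of a Wedderburn component of $\Q G$ and characterize that component intrinsically—but the specific construction and the execution have gaps where the actual technical difficulty lives.

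First, the proposal $K=\GEN{a^{m'}}$ does not lead to a strong Shoda pair with $[L:K]=m'$ except in special circumstances. If you only enlarge $L$ by powers of $b$ (as you suggest) without correspondingly enlarging $K$, then $L/K$ generally fails to be cyclic; if you do enlarge $K$, the index $[L:K]$ will not be $m'$ but something of the form $m_{\pi'}s_p$ (or $m_{\pi'}m'_p$, or $m_{\pi'}m'_2$ in the even case). The paper's constructions, such as $K_0=\GEN{a_{\pi\setminus\{p\}},b^c}$ with $L=\GEN{a,b^c}$ and $c=\lcm(k,s_p/r_p)$, include a power of $b$ in both $L$ \emph{and} $K_0$, precisely so that $L/K_0$ stays cyclic and the index computes to the desired value. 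Notice also that the paper must work prime by prime: it fixes $p\in\pi$ with $m'_p>r_p$ and determines $(\Delta^G)_p$ alone, distinguishing three cases according to $\epsilon$ and the comparison of $s_p$ with $m'_p$. An all-at-once construction that targets the full $\Delta^G$ via a single idempotent does not mesh cleanly with the piecewise definition of $m'$ across primes.

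Second, the genuine obstruction is not addressed. Once you have a Wedderburn component $A=\Q Ge(G,L,K_0)$ with degree $c$ and center in $\Q_{m_{\pi'}s_p}$ satisfying prescribed intersection properties with $\Q_{m_{\pi'}}$ and $\Q_{s_p}$, the matching component $\Q He(H,L,K)\cong A$ of $\Q H$ need \emph{not} have $[L:K]=m_{\pi'}s_p$: Claim~\ref{psDividesHKnew} shows $[L:K]$ can also equal $2m_{\pi'}s_p$, or involve a replacement of a prime factor of $m_{\pi'}$ by a larger power of $p$ (cases (ii) and (iii)). Your fallback idea of ``maximality of $[Z(A):\Q]$'' does not rule these alternatives out, because the degree of $Z(A)$ over $\Q$ is the same (since $\varphi([L:K])=\varphi(m_{\pi'}s_p)$). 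A counting argument as in \Cref{EpsilonDetermined}--\Cref{mnrDetermined} is also not what happens here; the paper instead performs a direct Galois-theoretic comparison in each sub-case, showing that even when the cyclotomic conductor $[L:K]$ is wrong, the $p$-parts of the two restriction maps agree. That delicate argument—matching $\Res$ images up to a unit $u\equiv 1$ modulo the relevant quantities—is the heart of the proof and is entirely absent from your sketch. Without it, the claim that ``its center... must coincide with the center of the corresponding component of $\Q H$'' is unsubstantiated.
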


\begin{proof}
    Suppose that $G$ and $H$ satisfy the hypothesis of the proposition.
    By the results of the previous sections we can use a common notation for most of the invariants of $G$ and $H$: $m=m^G=m^H$, $n=n^G=n^H$, $s=s^G=s^H$, $r=r^G=r^H$,
    $R=R^G=R^H$, $k=k^G=k^H$, $\epsilon=\epsilon^G=\epsilon^H$ and $m'$ is defined as explained in \Cref{SSecMetacyclic}. As $m'$ is only depending on $m,n,r,s$ and $k$, it is the same for $G$ and $H$.
    We abuse the notation by  denoting with the same symbols the generators $a$ and $b$ of minimal metacyclic factorizations $G=\GEN{a}\GEN{b}$ and $H=\GEN{a}\GEN{b}$.

    As $\Delta^G$ and $\Delta^H$ are cyclic subgroups of $\U_{m'}$ to prove that they are equal we can we work prime by prime, i.e. we will prove that $(\Delta^G)_p=(\Delta^H)_p$ for every prime $p$.
    If $p\in \pi'$ or $m'_p=1$, then, by \Cref{RDetermined}, $(\Delta^G)_p=R^G_p=R^H_p=(\Delta^H)_p$. Also, if $r_p\ge m'_p$, then $(\Delta^G)_p=(\Delta^H)_p=1$ and hence we also may assume that $m'_p>r_p$. The latter implies that $r_p>1$.

    So in the remainder of the proof
    $$p\in \pi \qand m'_p>r_p$$
which implies that
$$r_p>1 \qand s_p>1,$$
    	 	 and we have to prove that $(\Delta^G)_p=(\Delta^H)_p$. We will consider three cases and in  each one the proof is going to proceed in the following way: We consider distinguished simple components of $\Q G$ (and $\Q H$) depending on the case. Each component of that kind is going to be of the form $\Q Ge(G,L,K)$ for a fixed subgroup $L$ of $G$ and various subgroups $K$ of $L$ so that $(L,K)$ satisfies the conditions of \Cref{SSPMetacyclic}. We prove that there is at least one component of that kind, parametrized by some particular $K_0$, and then we analyze some properties of the other possible $K$'s yielding such components.
    The arguments for $G$ are valid for $H$ and as $\Q G$ and $\Q H$ are isomorphic, $\Q H$ has another component $\Q He(H,L,K) \cong \Q G e(G,L,K_0)$. Using the description of these algebras in \Cref{SSPMetacyclic} we will obtain the desired conclusion with the help of the Main Theorem of Galois Theory, because $(\Delta^G)_p$ will be identified with the $p$-th part of the Galois group of a certain field extension $\Q_d/F$ where $d$ is common for $G$ and $H$ and $F$ is the center of $S$.
    As some of the arguments are similar in the different cases, we will only explain all the details in Case 1, and in Cases 2 and 3, we only elaborate arguments which are significantly different than in previous cases.

We illustrate the arguments with the following example.

\begin{example}{\rm
Consider the following groups
$$G=\GEN{a,b \mid a^{63}=1, b^3=a^{21}, a^b=a^4}, \quad
H=\GEN{a,b \mid a^{63}=1, b^3=a^{21}, a^b=a^{58}}.$$
In both cases $m_{\pi'}=\{7\}$, $\pi=\{3\}$ and $\INV(G_3)=[9,3,3,\GEN{4}_9]$. So,  $m_3=m'_3=9$ and $r_3=s_3=k=n_p=3$. So, for $p=3$, we are in Case 2 below and we look at the Wedderburn components satisfying conditions \ref{E1} and \ref{E2} below. The Wedderburn decompositions are as follows:
\begin{eqnarray*}
\Q G &=& \Q \oplus 5\Q_3 \oplus M_3(\Q_7^{2}) \oplus M_3(\Q_{21}^{4}) \oplus \mathbf{M_3(\Q_{63}^{4})}, \\
\Q H &=& \Q \oplus 5\Q_3 \oplus M_3(\Q_7^{2}) \oplus M_3(\Q_{21}^{4}) \oplus \mathbf{M_3(\Q_{63}^{58})}.
\end{eqnarray*}
They differ in exactly one component which is the only one satisfying \ref{E1} and \ref{E2}. Their centers are $\Q_{63}^{4}$ and $\Q_{63}^{58}$, respectively and their Galois correspondents are precisely $\GEN{4}_{63}=\Gal(\Q_{m'}/F^G)$ for $G$ and $\GEN{58}_{63}=\Gal(\Q_{m'}/F^H)$, which are actually the groups $\Delta^G$ and $\Delta^H$.}
    \end{example}

    We work most of the time with $G$, which is given by the presentation in \eqref{Presentation} and simplify the notation for this group by setting $\Delta=\Delta^G$.

    \medskip
    \noindent\textbf{Case 1}: Suppose that $\epsilon^{p-1}=1$ and $s_p\ge m'_p$.

    \medskip\refstepcounter{claimcounter}
    \noindent\underline{Claim \theclaimcounter}. \label{Claim1}
    In this case $m'_p=\min(m_p,k_pr_p,\max(r_p,s_p,r_p\frac{k_ps_p}{n_p}))$, $r_p\le s_p$ and one of the following hold $k_pr_p\le n_p$ or $s_p=m_p$.

    \begin{proof}
        The first equality follows from the definition of $m'$ \eqref{m'}.
        Assume $r_p > s_p$. By \Cref{Parameters}.\eqref{Param+}, $s_p\leq n_p <k_p s_p$.
        Then, $\max(r_p, s_p, r_p\frac{k_ps_p}{n_p}) = \frac{k_ps_p}{n_p}r_p \leq k_pr_p$, so $m_p' = \min(m_p, \frac{k_ps_p}{n_p}r_p)$. If $m_p' = m_p$, then $s_p \geq m_p' = m_p$ so $s_p = m_p$, which yields the contradiction $m_p \geq r_p > s_p = m_p$. Otherwise, $m_p' = \frac{k_ps_p}{n_p}r_p$ that also yields a contradiction because $s_p \geq \frac{k_ps_p}{n_p}r_p > r_p > s_p$.
        So we have proved that $s_p\geq r_p$.
        Now let us assume $k_pr_p>n_p$ and $s_p<m_p$. Then, $m_p'\neq m_p$, as $s_p\geq m_p'$. In addition, $s_p< \frac{k_ps_p}{n_p}r_p \leq k_pr_p$, so $\max(r_p,s_p,\frac{k_ps_p}{n_p}r_p) = \frac{k_ps_p}{n_p}r_p$ and $m_p'=\frac{k_ps_p}{n_p}r_p > s_p \geq m_p'$, again a contradiction.
    \end{proof}

    In this case we fix the following notation
    $$c=\lcm\left(k,\frac{s_p}{r_p}\right) \qand L=\GEN{a,b^c},$$
    and the distinguished Wedderburn components $S$ of $\Q G$ are those with a center isomorphic to a subfield $F$ of $\C$ satisfying the following conditions:
    \begin{itemize}
        \item[\mylabel{D1}{(D1)}] $F$ is contained in $\Q_{m_{\pi'}s_p}$ and $\Deg(S)=[\Q_{m_{\pi'}s_p}:F]=c$.
        \item[\mylabel{D2}{(D2)}] $F\cap \Q_{m_{\pi'}}=(\Q_{m_{\pi'}})^R$ and $F\cap \Q_{s_p}=\Q_{r_p}$.
    \end{itemize}

    We first show that such Wedderburn component occurs.

    \medskip\refstepcounter{claimcounter}
    \noindent\underline{Claim \theclaimcounter}. \label{Claim2} If $K_0=\GEN{a_{\pi\setminus \{p\}},b^{c}}$, then $(L,K_0)$ is a strong Shoda pair of $G$ and $S=\Q Ge(G,L,K_0)$ satisfies \ref{D1} and \ref{D2}.
\begin{proof}
Indeed, first of all  $[b^{c},a]=[b_{\pi}^{c},a_{\pi}]\in \GEN{a_{\pi\setminus \{p\}},a_p^{\max(k_pr_p,s_p)}} \subseteq K_0$, because $a_p^{s_p}=(b_p^{c})^{n_p/\max(k_p,s_p/r_p)}\in K_0$, by \eqref{Presentation}. This proves that $K_0$ is normal in $G$. Moreover, $L=\GEN{a,K_0}$, and hence $L/K_0$ is cyclic. In addition, $K_0\cap \GEN{a}=\GEN{a_{\pi\setminus \{p\}},a_p^{s_p}}$.
Finally, every cyclic subgroup of $G$ not contained in $L$ is generated by an element of the form $x=a^ib^t$ for $t$ a proper divisor of $c$. If $k\nmid t$ , then $1\ne [x,a_{\pi'}]\in \GEN{a_{\pi'}}$ and hence $[x,a_{\pi'}]\not\in K_0$.
Otherwise $\frac{s_p}{r_p}\nmid t$ and hence $\GEN{[x,a_p]}=\GEN{a_p^{t_pr_p}}\not\subseteq K_0$.
This proves that $(L,K_0)$ is a strong Shoda pair of $G$, by \Cref{SSPMetacyclic}.
Now we take $e=e(G,L,K_0)$ and $S=\Q Ge$. As $K_0$ is normal in $G$ and $L/K_0$ is cyclic of order $m_{\pi'}s_p$ and generated by $aK_0$, it follows that $S$ is isomorphic to a cyclic algebra $(\Q_{m_{\pi'}s_p},\Res_{m_{\pi'}s_p}(\gamma),a)$.
Therefore $\Deg(S)=[G:L]=c$ and its center is isomorphic to $F=\Q_{m_{\pi'}s_p}^{\Res_{m_{\pi'}s_p}(\gamma)}$, which clearly satisfies \ref{D1} and \ref{D2}.
\end{proof}

The next goal is to describe the Wedderburn components of $\Q G$ satisfying conditions \ref{D1} and \ref{D2}.
Let $S$ be such a component.
By \Cref{SSPMetacyclic} and condition \ref{D1}, $S=\Q Ge(G,L,K)$ for a subgroup $K$ of $L$ such that the conditions of \Cref{SSPMetacyclic} hold.
Then the following statements hold where $C=\Core_G(K)$:
\begin{itemize}
	\item[\mylabel{PCI-Cp1}{(V1)}] $a_{\pi\setminus \{p\}}^4,b_{p'}^{4c}\in \Core_G(K)$.
	\item[\mylabel{PCI-Cp2}{(V2)}] If either $v_2(|a|)\le 1$ or $\GEN{a}$ has an element of order 4 which is central in $G$, then $a_{\pi\setminus \{p\}}^2\in C$.
	\item[\mylabel{PCI-Cp3}{(V3)}] If $a_{\pi\setminus \{p\}}^2 \in C$ or $k$ is even, then $b_{p'}^{2c}\in C$.
	\item[\mylabel{PCI-Cp4}{(V4)}] $a_p^{\max(k_pr_p,s_p)}\in K$.
\end{itemize}
Indeed, suppose that $F$ has a root of unity of order $q$ with $q$ prime. The hypothesis $F\subseteq \Q_{m_{\pi'}s_p}$ implies that $q\in \pi'\cup \{2,p\}$. However, the hypothesis $F\cap \Q_{m_{\pi'}}=(\Q_{\pi'})^R$ implies that $q\not\in \pi'$. Indeed, by \cite[Lemma~3.1(2)]{GarciadelRioClasification}, $\pi(A\cap Z(G))\cap \pi'=\emptyset$ and hence $\Q(\pi')^R$ does not have roots of unity of odd prime order. Thus if $q\in \pi'$ then the hypothesis $F\cap \Q_{m_{\pi'}}=\Q_{\pi'}^R$ implies that $F$ does not have a root of unity of order $q$.
Therefore $q\in \{2,p\}$ and hence \ref{PCI-Cp1}-\ref{PCI-Cp3} follow directly from Lemma~\ref{PCIOdd-A2} and Lemma~\ref{PCI2-A1}. Statement \ref{PCI-Cp4} is easy because $a_p,b_p^{\max(k_p,s_p/r_p)}\in L$ and hence, since $L$ and $K$ satisfy the conditions of \Cref{SSPMetacyclic}, $a_p^{\max(k_pr_p,s_p)}=[b^{\max(k_p,s_p/r_p)},a_p]\in K$.

Let $D=\GEN{a_{\pi\setminus \{p\}}^4,a_p^{\max(k_pr_p,s_p)},b_{p'}^{4c}}$. Observe that $D$ is normal in $G$ because $[b_{p'}^{4c},a]=[b_{p'}^{4c},a_{\pi\setminus \{p\}}]\in \GEN{a_{\pi\setminus \{p\}}^{4}}\subseteq D$.

\medskip\refstepcounter{claimcounter}\label{Knormalnew}
\noindent\underline{Claim \theclaimcounter}.
$L_p/K_p$ is generated by either $a_pK$ or $b_p^cK$, $K$ is normal in $G$ and $\varphi([L:K])=\varphi(m_{\pi'}s_p)$.
\begin{proof}
As $L/K$ is cyclic, so is $L_p/K_p\cong (L/K)_p=L_pK/K$. Moreover, $L_p=\GEN{a_p,b_p^{c}}$, and hence $(L/K)_p=L_pK/K$ is generated by either $a_pK$ or $b_p^cK$.

We now prove that $K$ is normal in $G$. Observe that $L$ is nilpotent because $a_{\pi'}\in Z(L)$ and $[b_{\pi'},a_{\pi}]=1$.
As $D$ is normal in $G$ and $D\subseteq K$ we may assume without loss of generality that $D=1$.
Then $L_{\pi'}=\GEN{a_{\pi'}}$ and $\pi\subseteq \{2,p\}$.
Using that $L$ is nilpotent, $K\unlhd L$ and $a\in L$, it is enough to prove that $b_q$ normalizes $K_l$ for every pair of primes $q$ and $l$.
This is clear if $l\in \pi'$ because $L_{\pi'}=\GEN{a_{\pi'}}$.
It is also clear if $q\in\pi'$ because $[b_{\pi'},a_{\pi}]=1$.
Moreover as $G_{\pi}$ is nilpotent the result is also clear if $q\ne l$ and $q,l\in \pi$.
It remains to consider the case $q=l\in \pi$.

Let us first consider the case $q=l\ne p$, i.e. $q=l=2$. Then $L_2$ is either abelian of exponent at most $4$ or $L_2$ is either $D_8$, $Q_8$ or $C_4\rtimes C_4$ with $L_2'=\GEN{a_2^2}\subseteq K_2$. Suppose that $L_2$ is non-abelian and recall that $L=\GEN{a,b^c}$, so that $L_2=\GEN{a_2,b_2^c}$. Then $|a_2|=4$, $a_2^{b_2}=a_2^2$ and $b_2$ commutes with $\GEN{b_2^c}$ and normalizes $L_2$  and every subgroup of $\GEN{a_2}$.  $K_2$ is a proper subgroup of $L_2$ containing properly $\GEN{a_2^2}$. If $L_2\cong D_8$ then $K_2$ is either $\GEN{a_2}$, $\GEN{a_2^2,b_2^c}$ or $\GEN{a_2^2,a_2b_2^c}$. If $L_2\cong Q_8$ then $K_2$ is either $\GEN{a}$, $\GEN{b_2^c}$ or $\GEN{ab_2^c}$. Finally, if $L_2\cong C_4\rtimes C_4$, then $K_2$ is either $\GEN{a_2}$, $\GEN{a_2b_2^2}$ or $\GEN{a_2,b_2^2}$. In all the cases $b_2$ normalizes $K_2$.  Suppose otherwise that $L_2$ is abelian. Now, by assumption $a_2^4=1$ so it follows that $G_2$ is either abelian or $|a_2|=4$ and $a_2^{b_2}=a_2^{-1}$. In the first case clearly $b_2$ normalizes every subgroup of $L_2$.
In the second case either $a_2^2\in K$ or $c$ is even and $K_2\subseteq \GEN{a_2^2,b_2^c}\subseteq C_G(b_2)$.
In both cases $K_2$ is normalized by $b_2$.

It remains to show that $b_p$ normalizes $K_p$. Otherwise $p\mid d$, where $d=[G:N]$ and $N=N_G(K)=\GEN{a,b^d}$.
As $L/K$ is cyclic, $L=\GEN{u,K}$ for some $u$ and $S\cong M_{[G:N]}(\Q_h*\GEN{\rho})$ with $h=[L:K]$ and $\rho(\zeta_h)=\zeta_h^x$, if $u^{b^d}K=u^xK$.
Moreover, as $L_p=\GEN{a_p,b_p^{c}}$, $(L/K)_p=L_pK/K$ is generated by either $a_pK$ or $b_p^cK$. So we may assume that $u_p$ is either $a_p$ or $b_p^c$.
In the second case, $(L/K)_p$ is central in $(N_G(K)/K)_p$ and hence $h_p\le s_p$ by condition \ref{D1} and $h_p=r_p$ by condition \ref{D2}. In the first case,
$\GEN{x}_{h_p}=\GEN{(1+r_p)^d}_{h_p}$ and if $r_p<h_p$, then $v_p((1+r_p)^d-1)>v_p(r_p)$ as $p\mid d$.  Hence if $p r_p$ divides $h_p$, then $F$ has a central element of order $p r_p$ in contradiction with \ref{D2}. This proves that $h_p$ divides $r_p$. Therefore $a_p^{r_p}\in K$ so that $[b_p,a_p]\in K_p$. This implies that $[b_p,G_p]\subseteq K_p$ and hence $b_p$ normalizes $K_p$, as desired. This finishes the proof of the normality of $K$ in $G$.

As $K$ is normal in $G$, $S\cong \Q_{[L:K]}*G/L$ and $G/L=\GEN{bL}$ where the action $\rho$ of the crossed product is given by $\rho(\zeta_{[L:K]})=\zeta_{[L:K]}^x$, if $L=\GEN{u,K}$ and $u^b\in u^xK$.
Moreover, $[G:L]=\Deg(S)=c=[\Q_{m_{\pi'}s_p}:F]=\frac{\varphi(m_{\pi'}s_p)}{\dim_{\Q}(F)}$.
Thus
$$c \varphi([L:K])=[G:L]\varphi([L:K])=\dim_{\Q}(S)=c^2\dim_{\Q} F = c\varphi(m_{\pi'}s_p)$$
and therefore $\varphi([L:K])=\varphi(m_{\pi'}s_p)$, as desired.
\end{proof}

    \medskip\refstepcounter{claimcounter}
    \noindent\underline{Claim \theclaimcounter}.  \label{psDividesHKnew}
    One of the following conditions holds:
    \begin{enumerate}[label=(\roman*)]
        \item $[L:K]=m_{\pi'}s_p$ or $p\ne 2$ and $[L:K]=2m_{\pi'}s_p$,
        \item $p\ne 2$ and $[L:K]=4\frac{m_{\pi'}}{q}p^t$, with $p^t\ge s_p$, $q=1 + 2\frac{p^t}{s_p}\in \pi'$ and $v_q(m_{\pi'})=1$.
        \item $p=2$ and $[L:K]=\frac{m_{\pi'}}{q_1\cdots q_l}2^t$, with $l\ge 1$, $2^t>s_2$, $q_1,...,q_l\in \pi'$, $\varphi(q_1\cdots q_l)=\frac{2^t}{s_2}$ and $v_{q_i}(m_{\pi'})=1$ for every $i$.
    \end{enumerate}

\begin{proof}
By \ref{PCI-Cp1}, $[L:K]_{\pi'}$ divides $m_{\pi'}$.
Let $[L:K]_p=p^t$,  $[L:K]_{\pi'}=p_1^{\alpha_1}\cdots p_w^{\alpha_w}$ and $m_{\pi'}=p_1^{\beta_1}\cdots p_w^{\beta_w}q_1^{\gamma_1}\cdots q_l^{\gamma_l}$ with $p_1,\dots,p_w,q_1,\dots,q_l$ the different elements of $\pi'$.

Let us first assume that $p\neq 2$. By \ref{PCI-Cp1}, $[L:K]_2$ divides $4$.
By condition \ref{D2}, $r_p\mid p^t$ and hence $t\ge 1$.
By Claim~\ref{Knormalnew}, $\varphi([L:K])=\varphi(m_{\pi'}s_p)$ and therefore
	$$\varphi([L:K]_2)p^tp_1^{\alpha_1-1}\cdots p_w^{\alpha_w-1} = s_pp_1^{\beta_1-1}\cdots p_w^{\beta_w-1}q_1^{\gamma_1-1}\cdots q_l^{\gamma_l-1}(q_1-1)\cdots (q_l-1).$$
Then $\alpha_i=\beta_i$ for every $i=1,\dots,w$ and as $[L:K]_2\mid 4$ and $q_i$ is odd for every $i$ it follows that $s_p\le p^t$ and either $l=0$, $s_p=p^t$ and $[L:K]_2\mid 2$ or $[L:K]_2=4$, $l=1$, $q_1=1+2\frac{p^t}{s_p}$ and $\gamma_1=1$. This proves that either (i) or (ii) holds.

Now, let us consider the case when $p=2$. Then the equality $\varphi([L:K])=\varphi(m_{\pi'}s_2)$ yields
	$$2^{t}p_1^{\alpha_1-1}\cdots p_w^{\alpha_w-1}=s_2p_1^{\beta_1-1}\cdots p_w^{\beta_w-1}q_1^{\gamma_1-1}\cdots q_l^{\gamma_l-1}(q_1-1)\cdots (q_l-1).$$
Again $\alpha_i=\beta_i$ for every $i=1,...,w$ and, as each $q_i$ is odd, $\gamma_i=1$ for every $i=1,...,l$. So the equation gets reduced to:
	$$2^t=s_2(q_1-1)\cdots (q_l-1).$$
If $l=0$, then condition (i) holds, and otherwise condition (iii) holds.
\end{proof}

Now we have enough information to prove that  $(\Delta^G)_p = (\Delta^H)_p$ in this case.
Recall that both $G$ and $H$ are given by a presentation as in \eqref{Presentation} with the same parameters $m,n,s$ but now the automorphism $\gamma$ differs for $G$ and $H$.
We denote them $\gamma^G$ and $\gamma^H$ respectively.
We know that $R^G=R^H$, i.e. $\GEN{\Res_{m_{\pi'}}(\gamma^G)}=\GEN{\Res_{m_{\pi'}}(\gamma^H)}$ and hence we may assume without loss of generality that $\Res_{m_{\pi'}}(\gamma^G)=\Res_{m_{\pi'}}(\gamma^H)$.

In the remainder of the proof we will consider restrictions of $\gamma^G$ and $\gamma^H$ to several cyclotomic fields $\Q_d$ with $d\mid m$. For shortness we will abuse the notation and simplify $\Q_d^{\Res_d(\gamma^G)}$ by writing $\Q_d^{\gamma^G}$, and similarly for $H$.
We fix the Wedderburn component $S=\Q Ge(G,L,K_0)$ of $KG$ with $K_0$ as in Claim~\ref{Claim2}.
Then the center of $S$ is isomorphic to $\Q_{m_{\pi'}s_p}^{\gamma^G}$.
As $\Q G\cong \Q H$, $\Q H$ has a Wedderburn component $\Q He(H,L,K)$ isomorphic to $S$ with $(L,K)$ a strong Shoda pair of $H$.
By Claim~\ref{Knormalnew}, $K$ is normal in $H$.
Moreover, $[L:K]$ satisfies one of the conditions of Claim~\ref{psDividesHKnew} and $(L/K)_p$ is generated by either $a_pK$ or $b_p^cK$.
Let $p^t=[L:K]_p$.
If $(L/K)_p$ is generated by $b_p^cK_p$, then $\Q_{r_p} = \Q_{s_p}^{\gamma^G} = \Q_{p^t}$ and hence $m'_p>r_p = p^t\ge s_p\ge m'_p$, a contradiction.
Therefore, $(L/K)_p$ is generated by $a_pK$.
Thus $(L/K)_{\pi'\cup \{p\}}$ is generated by $a_{\pi'\cup \{p\}}K$. Therefore the center of $\Q Ge(H,L,K)$ is isomorphic to a subfield $F$ of $\Q_{[L:K]}$ such that $F\cap \Q_{[L:K]_{\pi'\cup \{p\}}}=\Q_{[L:K]_{\pi'\cup \{p\}}}^{\gamma^H}$.
Then $\Q_{m_{\pi'}s_p}^{\gamma^G}=\Q_{[L:K]_{\pi'\cup \{p\}}}^{\gamma^H}$.

We deal separately with the three cases of Claim~\ref{psDividesHKnew}.

\underline{Case (i)}.
Suppose first that either $[L:K]=m_{\pi'}s_p$ or $p\ne 2$ and $[L:K]=2m_{\pi'}s_p$.
Then, $\Q_{m_{\pi'}s_p}=\Q_{[L:K]_{\pi'\cup \{p\}}}$ and hence $\Q_{m_{\pi'}s_p}^{\gamma_G}=\Q_{m_{\pi'}s_p}^{\gamma_H}$.
Thus, Galois Theory yields $\GEN{\Res_{m_{\pi'}s_p}(\gamma^G)}=\GEN{\Res_{m_{\pi'}s_p}(\gamma^H)}$.
Since $m'_p\mid s_p$, we have $\Res_{m'_{\pi'\cup \{p\}}}((\Delta^G)_p)=\Res_{m'_{\pi'\cup \{p\}}}(\GEN{\gamma^G}_p)=\Res_{m'_{\pi'\cup \{p\}}}(\GEN{\gamma^H}_p)=\Res_{m'_{\pi'\cup \{p\}}}((\Delta^H)_p)$.
As $\Res_{m'_{\pi\setminus \{p\}}}((\Delta^G)_p)=\Res_{m'_{\pi\setminus \{p\}}}((\Delta^H)_p)=1$. We conclude that $(\Delta^G)_p=(\Delta^H)_p$, as desired.

\underline{Case (ii)}.
Suppose now that $p\ne 2$ and $[L:K]=4\frac{m_{\pi'}}{q}p^t$, with $p^t\ge s_p$, $q=1 + 2\frac{p^t}{s_p}\in \pi'$ and $v_q(m_{\pi'})=1$.
Then
$F=\mathbb{Q}_{4\frac{m_{\pi'}}{q}s_p}^{\alpha} = \mathbb{Q}_{m_{\pi'}s_p}^{\gamma^G}$, where $\Res_{\frac{m_{\pi'}}{q}p^t}(\alpha)=\Res_{\frac{m_{\pi'}}{q}p^t}(\gamma^H)$ and $\zeta_4^{\alpha}=\zeta_4^i$, if $u^b=u^i$ with $L/K=\GEN{uK}$. Thus, $F=\mathbb{Q}_{\frac{m_{\pi'}}{q}s_p}^{\gamma^H} = \mathbb{Q}_{\frac{m_{\pi'}}{q}s_p}^{\gamma^G}$, and by Galois Theory, $\GEN{\Res_{\frac{m_{\pi'}}{q}s_p}(\gamma^G)}=\GEN{\Res_{\frac{m_{\pi'}}{q}s_p}(\gamma^H)}$.
Moreover, by \Cref{SSPMetacyclic}, $\gamma^H(\zeta_4) = \alpha(\zeta_4)=\zeta_4^{-1}$, since $F$ does not have a fourth root of unity, and, as $\Res_{m_{\pi'}}(\gamma^G)=\Res_{m_{\pi'}}(\gamma^H)$, we have $\Q_q^{\gamma^H} =\Q_q^{\gamma^G}=\Q$.
The first implies that $(L/K)_2$ is not generated by $b_2^cK_2$ and hence it is generated by $a_2K_2$, so we may assume that $\alpha=\gamma^H$.

Assume first that $s_p=p^t$. Then $q=3$ and  $F=\mathbb{Q}_{\frac{m_{\pi'}}{3}s_p}^{\gamma^H} = \mathbb{Q}_{\frac{m_{\pi'}}{3}s_p}^{\gamma^G}$, and by Galois Theory, $\GEN{\Res_{\frac{m_{\pi'}}{3}s_p}(\gamma^G)}=\GEN{\Res_{\frac{m_{\pi'}}{3}s_p}(\gamma^H)}$.
Moreover, $\Q_3^{\gamma^G}=\Q_3^{\gamma^H}=\Q$, so that  $\gamma^H(\zeta_3)=\gamma^G(\zeta_3) = \zeta_3^{-1}$.
Then $\Res_{m_{\pi'}s_p}(\GEN{\gamma^G})_p=\Res_{m_{\pi'}s_p}(\GEN{\gamma^H})_p$, as $p$ is odd, and, as in the previous case, we deduce that $(\Delta^G)_p=(\Delta^H)_p$.

Now we assume $p^t>s_p$. Since also $r_p<m'_p \le s_p$, we have $\frac{p^t}{r_p}> \max(\frac{p^t}{s_p},\frac{s_p}{r_p})$. If $m_{\pi'}=q$, then $(\Q_{4p^t})^{\gamma^H}=(\Q_{qs_p})^{\gamma^G}=\Q_{r_p}$ and hence
$k_p=q-1=2\frac{p^t}{s_p}$. Then
$\max\left(\frac{p^t}{s_p},\frac{s_p}{r_p}\right)=c_p=\Deg(S)_p=[\Q_{4p^t}:\Q_{r_p}]_p=\frac{p^t}{r_p}$, a contradiction.
Thus $m_{\pi'} = xq$ with $x>1$.
By \ref{D1} and the description of $S$ (see \Cref{SSPMetacyclic}), we have $[\mathbb{Q}_{4xp^t}:F] = [\mathbb{Q}_{qxs_p}:F]=\Deg(S)=c$.
By looking at the $p$-part of these degrees we obtain:
\begin{equation}\label{maxequal}
\max\left(\bar{k}_p,\frac{p^t}{r_p}\right) = \max\left(\bar{k}_p,\frac{p^t}{s_p}, \frac{s_p}{r_p}\right)=c_p,\end{equation}
where $\bar{k} = |\Res_{x}(\gamma^G))| = |\Res_{x}(\gamma^H))|$.
As $\frac{p^t}{r_p} > \max\left(\frac{p^t}{s_p}, \frac{s_p}{r_p}\right)$, necessarily $\bar{k}_p\ge \frac{p^t}{r_p}$.
So $c_p=\bar{k}_p\ge \frac{p^t}{r_p} > \max\left(\frac{p^t}{s_p}, \frac{s_p}{r_p}\right)$.
As $\GEN{\Res_{xs_p}(\gamma^H)} = \GEN{\Res_{xs_p}(\gamma^G)}$,
$\Res_{xs_p}(\gamma^G)_p = (\Res_{xs_p}(\gamma^H)_p)^{u}$ for certain $u$ coprime with $p$ which we can even choose to be odd.
Then,
    $$\Res_x((\gamma^{H})_p) =\Res_x((\gamma^{G})_p) =
(\Res_x((\gamma^{H})_p))^{u}$$
so $u \equiv 1 \mod |\Res_x(\gamma^H)_p|$. Moreover, $|\Res_x(\gamma^H)_p)|= \bar{k}_p \geq \frac{p^t}{r_p} > \frac{p^t}{s_p}$, so
$u \equiv 1 \mod \frac{p^t}{s_p}$ and as $u$ is odd we have $u\equiv 1 \mod 2$. Thus $u \equiv 1 \mod q-1$, and as $\Res_{m_{\pi'}}(\gamma^G) = \Res_{m_{\pi'}}(\gamma^H)$ we have $\Res_q(\gamma^G) = \Res_q(\gamma^H)=\Res_q(\gamma^H)^u$.
Then, $(\gamma^G)_p=((\gamma^H)_p)^u$, because the equality happens both restricting to $q$ and to $xs_p$. So the $p$-th parts of $\gamma^G$ and $\gamma^H$ generate the same subgroup and as $s_p\ge m'_p$ we obtain the desired conclusion:
$$(\Delta^G)_p = \Res_{m_{\pi'}m'_p}(\GEN{(\gamma^G)_p})=\Res_{m_{\pi'}m'_p}(\GEN{(\gamma^H)_p})= (\Delta^H)_p.$$

\underline{Case (iii)}.
Finally suppose that $p=2$ and $[L:K]=\frac{m_{\pi'}}{q_1\cdots q_l}2^t$, with $l\ge 1$, $2^t>s_2$, $q_1,...,q_l\in \pi'$, $\varphi(q_1\cdots q_l)=\frac{2^t}{s_2}$ and $v_{q_i}(m_{\pi'})=1$ for every $i$.
Let  $x = \frac{m_{\pi'}}{q_1\cdots q_l}$.
Arguing as in the previous case we obtain
$$c_2=\bar{k}_2 \ge \frac{2^t}{r_2} > \max\left(\frac{2^t}{s_2}, \frac{s_2}{r_2}\right) \ge (q_1-1)\cdots (q_l-1),$$
and, from $2^t>s_2$ we get that $b^lK$ does not generates $L/K$ and then that $F=(\Q_{x2^t})^{\gamma^H}=(\Q_{xs_2})^{\gamma^G}$.
Having in mind the previous inequalities the same argument as in the previous case yields the desired conclusion, i.e. $(\Delta^G)_2=(\Delta^H)_2$.

\medskip
\noindent\textbf{Case 2}. Suppose that $\epsilon^{p-1}=1$ and $s_p< m'_p$.

\medskip\refstepcounter{claimcounter}
\noindent\underline{Claim \theclaimcounter}.
In this case $s_p<m_p$, $n_p<k_pr_p$ and $\frac{k_ps_p}{n_p}r_p \ge m_p'$.

\begin{proof}
    As $s_p< m_p'=\min(m_p,k_pr_p,\max(r_p,s_p,r_p\frac{k_ps_p}{n_p}))$, clearly $s_p<m_p$ and $s_p < \max(s_p, r_p, \frac{k_ps_p}{n_p}r_p)$.
    If $s_p < r_p$, then, by \Cref{Parameters}.\eqref{Param+}, we have $n_p <k_p s_p$, so $s_p < r_p < r_p \frac{k_p s_p}{n_p}$, and from this equation we easily get $n_p < k_p r_p$.
    Otherwise, $s_p < r_p \frac{k_p s_p}{n_p}$ and again we get $n_p < k_p r_p$.

Since $s_p<\max(s_p,r_p, r_p \frac{k_p s_p}{n_p})$, $\max(s_p,r_p, r_p \frac{k_p s_p}{n_p})$ is either $r_p$ or $r_p\frac{k_p s_p}{n_p}$. In the later case $m_p' = \min(m_p, k_pr_p,  r_p\frac{k_p s_p}{n_p}) \le  r_p\frac{k_p s_p}{n_p}$, as desired. Otherwise, $r_p >  r_p\frac{k_p s_p}{n_p}$. Then, $k_p s_p < n_p < k_p r_p$, so $s_p < r_p$, in contradiction with \Cref{Parameters}.\eqref{Param+}.
\end{proof}

In this case the distinguished Wedderburn components $S$ of $\Q G$ are those with center isomorphic to a subfield $F$ of $\C$ satisfying the following conditions:
\begin{itemize}
    \item[\mylabel{E1}{(E1)}] $F$ is contained in $\Q_{m_{\pi'}m'_p}$ and $\Deg(S)=[\Q_{m_{\pi'}m'_p}:F]=k$.
    \item[\mylabel{E2}{(E2)}] $F\cap \Q_{m_{\pi'}}=(\Q_{m_{\pi'}})^R$ and $F\cap \Q_{m'_p}=\Q_{r_p}$.
\end{itemize}

We first show that such Wedderburn component exists.
By \Cref{PropEse}.\eqref{vpEse}, $v_p\left(\Ese{1+r_p}{\frac{n_p}{k_p}}\right) =\frac{n_p}{k_p}$. Write $\Ese{1+r_p}{\frac{n_p}{k_p}}=z\frac{n_p}{k_p}$. Fix an integer $y$ such that
$$z\equiv y \frac{k}{k_p} \mod \frac{m_p}{s_p}.$$
As $p\nmid z$ and $s_p<m_p$, we have $p\nmid y$.

\medskip\refstepcounter{claimcounter}
\noindent\underline{Claim \theclaimcounter}.
If $L=\GEN{a,b^k}$ and $K_0=\GEN{a_{\pi\setminus \{p\}},a_p^{\frac{r_ps_pk_p}{n_p}},b^{-yk}a_p^{\frac{s_pk_p}{n_p}}, b_{p'}^k}$, then $(L,K_0)$ is a strong Shoda pair of $G$ and $S=\Q Ge(G,L,K_0)$ satisfies \ref{E1} and \ref{E2}.

\begin{proof} As $p\in \pi$, $[b_p,a_{\pi\setminus \{p\}}]=1$.
Thus $\GEN{[b_p^{-yk}a_p^{\frac{s_pk_p}{n_p}},a]}=\GEN{[b_p^{-yk},a_p]}=\GEN{[b_p^{k_p},a_p]}=\GEN{[a_p^{r_pk_p}} \subseteq \GEN{a_{p}^{\frac{r_ps_pk_p}{n_p}}} \in K_0$, since $s_p\mid n_p$.
    Also, $[b_{p'}^k, a] = [b_{\pi\setminus \{p\}}^k, a_{\pi}] \in \GEN{a_{\pi\setminus \{p\}}} \in K_0$. This proves that $K_0$ is normal in $G$. Moreover, $L=\GEN{a,K_0}$, and hence $L/K_0$ is cyclic.
    In order to prove that $L/K_0$ is maximal abelian in $G/K_0$ observe that $|b_p^{-yk}a_p^{\frac{s_pk_p}{n_p}}\GEN{a}|=|b_p^{yk}\GEN{a}|=\frac{n_p}{k_p}$ and $(b_p^{-yk}a_p^{\frac{s_pk_p}{n_p}})^{\frac{n_p}{k_p}} =b_p^{-yk\frac{n_p}{k_p}} a_p^{\frac{s_pk_p}{n_p} \Ese{1+r_p}{\frac{n_p}{k_p}}} = a_p^{-ys_p\frac{k}{k_p} + zs_p} = 1$.
    Then $|b_p^{-yk}a_p^{\frac{s_pk_p}{n_p}}\GEN{a}|=|b_p^{-yk}a_p^{\frac{s_pk_p}{n_p}}|=\frac{n_p}{k_p}$,
    and therefore $K_0\cap \GEN{a}=\GEN{a_{\pi\setminus \{p\}},a_p^{\frac{r_ps_pk_p}{n_p}}}\subseteq \GEN{a_{\pi}}$.
     Moreover, every cyclic subgroup of $G$ not contained in $L$ is generated by an element of the form $x=a^ib^t$ for $t$ a proper divisor of $k$, and for such $x$,  $1\ne [x,a_{\pi'}]\in \GEN{a_{\pi'}}$.
    Thus $[x,a_{\pi'}]\not\in K_0$.
    This shows that $(L,K_0)$ is a strong Shoda pair of $G$.

    Now we take $e=e(G,L,K_0)$ and $S=\Q Ge$. As $K_0$ is normal in $G$ and $L/K_0$ is cyclic of order $m_{\pi'}m'_p$ and generated by $aK_0$, it follows that $S$ is isomorphic to a cyclic algebra $(\Q_{m_{\pi'}m'_p},\Res_{m_{\pi'}m'_p}(\gamma^G),\zeta_{m_{\pi'}m'_p}^s)$.
    Therefore $S$ satisfies \ref{E1} and \ref{E2}.
\end{proof}

The remaining arguments in this case follow exactly as the previous one, except changing $s_p$ by $m'_p$ where it corresponds.

\medskip
\textbf{Case 3}: Suppose now that $\epsilon^{p-1}=-1$, i.e. $p=2$ and $\epsilon=-1$.
In this case, \Cref{Parameters}.\eqref{Param}
gives us the following properties:
\begin{equation}\label{CondCase3-1}
s \mid m, \quad |\Delta| \mid n, \quad  \frac{m_2}{r_2}\le n_2,  \quad m_2\le 2s_2 \qand s_2\ne n_2r_2.
\end{equation}
\begin{equation}\label{CondCase3-2}
\text{If } 4\mid n, \; 8\mid m \text{ and } k_2<n_2, \text{ then } r_2\le s_2.
\end{equation}
Having in mind that $m'_2\ne r_2$ we have
\begin{equation}\label{CondCase3-3}
4\le k_2 \qand 4r_2\le m_2.
\end{equation}
This together with $s_2\ne n_2r_2$ implies that
$$m'_2=\begin{cases}
\frac{m_2}{2}, & \text{if } k_2<n_2 \text{ and } 2s_2=m_2<n_2r_2; \\
m_2, & \text{otherwise}.
\end{cases}$$

In this case we take
$$c=\lcm\left(k, \frac{m'_2}{r_2}\right) \qand L=\GEN{a, b^c}$$
and the distinguished Wedderburn components $S$ of $\Q G$ are those with a center isomorphic to a subfield $F$ of $\C$ satisfying the following conditions:
\begin{itemize}
    \item[\mylabel{F1}{(F1)}] $F$ is contained in $\Q_{m_{\pi'}m'_2}$ and $\Deg(S)=[\Q_{m_{\pi'}m'_2}:F]=c$.
    \item[\mylabel{F2}{(F2)}] $F\cap \Q_{m_{\pi'}}=(\Q_{m_{\pi'}})^R$ and $F\cap \Q_{m'_2}=(\Q_{m'_2})^\sigma$, where $\sigma(\zeta_{m'_2})=\zeta_{m'_2}^{-1+r_2}$.
\end{itemize}

\medskip\refstepcounter{claimcounter}
\noindent\underline{Claim \theclaimcounter}.
Let
$$K_0=\begin{cases}\GEN{a_{\pi\setminus \{2\}}, b^c}, & \text{if } b_2^c \not\in \GEN{a} \\
\GEN{a_{\pi\setminus \{2\}}, b_{2'}^c}, & \text{ otherwise}. \end{cases}$$
Then $(L,K_0)$ is a strong Shoda pair of $G$ and $S=\Q G e(G,L,K_0)$ satisfies \ref{F1} and \ref{F2}.

\begin{proof}
    We claim that $[L:K_0]=m_{\pi'}m'_2$. Indeed, first of all observe that
    $$K_0\cap \GEN{a}=\begin{cases} \GEN{a_{\pi\setminus\{2\}},a_2^{\frac{m_2}{2}}}, & \text{if } b_2^c\not\in \GEN{a} \text{ and } m_2=2s_2; \\
    \GEN{a_{\pi\setminus \{2\}}}; & \text{otherwise}.\end{cases}$$
    Therefore
    $$[L:K_0]=\begin{cases} m_{\pi'}\frac{m_2}{2}, & \text{if } b_2^c\not\in \GEN{a} \text{ and } m_2=2s_2; \\
    m_{\pi'}m_2; & \text{otherwise}.\end{cases}$$
    Thus, if $m_2\ne 2s_2$ or $k_2=n_2$, then $[L:K_0]=m_{\pi'}m_2=m_{\pi'}m'_2$.
    Suppose that $m_2=2s_2$ and $k_2\ne n_2$. Then $k_2<n_2$ and, as $m_2\le n_2r_2$ we have that $m_2'=m_2$ if and only if $m_2=n_2r_2$. Then $b_2^c\in \GEN{a}$ if and only if $\frac{m'_2}{r_2}\ge n_2$ if and only if $m_2=m_2'$.

    We now prove that $K_0$ is normal in $G$. This is easy if $b_2^c\in \GEN{a}$ because $[b^c_{2'}, a]=[b_{2'}^c,a_{\pi\setminus \{2\}}]\subseteq \GEN{a_{\pi\setminus \{2\}}}\subseteq K_0$.
    Suppose otherwise that $b_2^c\not\in \GEN{a}$.
    If $[b_2^c,a_2]=1$, then $[b^{c},a]=[b_{\pi}^{c},a_{\pi}]\in \GEN{a_{\pi\setminus \{2\}}} \in K_0$.
    Otherwise, $\frac{m_2}{r_2}>c_2$ and, recalling that we are assuming that $m'_2\ne r_2$, it follows that $m'_2=\frac{m_2}{2}$, $c_2=\frac{m_2}{2r_2}$ and $s_2=\frac{m_2}{2}$. Using \Cref{PropEse}.\eqref{vpRm-1}, it follows that $[b_2^c,a]=a_2^{\frac{m_2}{2}}\in \GEN{b_2^c}\subseteq K_0$.

    Moreover, $L=\GEN{a,K_0}$, and hence $L/K_0$ is cyclic.
    In order to prove that $L/K_0$ is maximal abelian in $G/K_0$, we argue by contradiction. So we take $x\in G\setminus L$ and assume that $[x,L]\subseteq K_0$.
    We may assume that $x=a^ib^t$ for $t$ a proper divisor of $c$, as every cyclic subgroup of $G$ not contained in $L$ is generated by an element of this form.
    If $k\nmid t$, then $1\ne [x,a_{\pi'}]\in \GEN{a_{\pi'}}$ and hence $[x,a_{\pi'}]\not\in K_0$, a contradiction.
    Thus $k\mid t$ and hence $\frac{m'_2}{r_2}\nmid t$, i.e. $t_2r_2<m'_2$.
    By assumption, $[x,a_2]\in K_0$. Observe that
    $$\GEN{[x,a_2]} =\begin{cases} \GEN{a_2^2}, &\text{ if } t_2=1;\\
    \GEN{a_2^{t_2r_2}}, & \text{otherwise}. \end{cases}$$
    The assumptions $m'_2\ne r_2$ implies that $m_2>2r_2\ge 4$ and hence $a_2^2\not\in K_0$. Thus $1\ne a_2^{t_2r_2}\in K_0$ and hence $b_2^c\not\in \GEN{a}$, $m_2=2s_2$ and $t_2r_2=\frac{m_2}{2}<m'_2$. Therefore $m'_2=m_2$. Since $m'_2\ne r_2$ and $b_2^c\not\in \GEN{a}$, it follows that  $k_2<n_2$. Moreover, $4\le k_2$ and $4r_2\le m_2$, by \eqref{CondCase3-3}. Thus, by the definition of $m'_2$, we have that  $s_2\ne n_2r_2$ and $m_2\ge n_2r_2$. By \Cref{Parameters}.\eqref{Param-}, $m_2=n_2r_2$ and hence $c_2\ge n_2$, so that $b_2^c\in \GEN{a}$, a contradiction.

    Now we take $e=e(G,L,K_0)$ and $S=\Q Ge$. As $K_0$ is normal in $G$ and $L/K_0$ is cyclic of order $m_{\pi'}m'_2$ and generated by $aK_0$, it follows that $S$ is isomorphic to a cyclic algebra $(\Q_{m_{\pi'}m'_2},\Res_{m_{\pi'}m'_2}(\gamma^G),\zeta_{m_{\pi'}m'_2}^s)$.
    Therefore $\Deg(S)=[G:L]=c$ and the center of $S$ satisfies \ref{F1} and \ref{F2}.
\end{proof}

As in Case 1 we now consider an arbitrary Wedderburn component $S$ of $\Q H$ with center isomorphic to a field $F$ and satisfying conditions \ref{F1} and \ref{F2}.
As $\Deg(S)=c$, we have $S=\Q H e(H,L,K)$ for a strong Shoda pair $(L,K)$ of $H$.
As $L/K$ is abelian, $\GEN{a_2^{\max(k_2r_2,m'_2)}}=\GEN{[a_2, b_2^{c}]}\subseteq K$ and combining this with \Cref{PCIOdd-A2} it follows that $K$ contains $D=\GEN{a_{\pi\setminus \{2\}},a_2^{\max(k_2r_2,m'_2)},b_{2'}^c}$.
Observe that $D$ is normal in $H$ because $[b_{2'}^{c},a]=[b_{2'}^{c},a_{\pi\setminus \{2\}}]\in \GEN{a_{\pi\setminus \{2\}}}\subseteq D$.

We denote $C=\{g\in H: ge=e\}=\Core_H(K)$ and $N=N_H(K)=\GEN{a,b^t}$ with $t=[H:N]$. Observe that $L$ is nilpotent because $a_{\pi'}\in Z(L)$ and $[b_{\pi'},a_{\pi}]=1$.
Moreover, $S\cong M_t(\Q_{[L:K]}*N/L)$ where $N/L=\GEN{b^tL}$ and the action $\rho$ of the crossed product is given by $\rho(\zeta_{[L:K]})=\zeta_{[L:K]}^x$, if $L=\GEN{u,K}$ and $u^{b^t}\in u^xK$.
By Lemma \ref{PCI2-A1}, $a_{\pi\setminus \{2\}}\in K$ and $b_{2'}^c\in K$. Moreover, as $L/K$ is abelian, $\GEN{[a_2, b_2^{c}]} = \GEN{a_2^{\max(k_2r_2,m'_2)}}\subseteq K$.
Thus $D\subseteq K$.

\medskip\refstepcounter{claimcounter}
\noindent\underline{Claim \theclaimcounter}. \label{Knormalparnew}
$K$ is normal in $H$, $L=\GEN{a,K}$ and $\varphi([L:K])=\varphi(m_{\pi'}m'_2)$.

\begin{proof}
By condition \ref{F2}, $F\cap \Q_{m'_2} = \Q_{m'_2}^{\sigma}$ where $\sigma(\zeta_{m'_2})=\zeta_{m'_2}^{-1+r_2}$. As $r_2<m'_2$, it follows that $F\cap \Q_{m'_2}$ is not a cyclotomic field.
On the other hand,  $F\subseteq \Q_{[L:K]}\cap \Q_{m_{\pi'}m'_2}$, and hence $F\cap \Q_{[L:K]_2} = F\cap \Q_{\max([L:K]_2,m'_2)} = F\cap \Q_{m'_2}$. Thus $F\cap \Q_{[L:K]_2}$ is not a cyclotomic field.

Since $L_2K/K=\GEN{a_2K,b_2^cK}$, $(L/K)_2$ is generated by $a_2K$ or $b_2^cK$. In the latter case, $F\cap \Q_{[L:K]_2}$ is a cyclotomic field, contradicting the previous paragraph. Thus $L_2K/K=\GEN{a_2K}$ and, as $L=\GEN{K,a_{\pi'},a_2,b^c}$, it follows that $L=\GEN{a,K}$.

As in the proof of Claim~\ref{Knormalnew}, $L$ is nilpotent and to prove that $K$ is normal in $H$ it is enough to show that $[b_2,K_2]\subseteq K_2$. Otherwise, the index $t$ of $N_H(K)$ in $H$ is even and hence, as $(L/K)_2$ is generated by $a_2K$, it follows that $F\cap \Q_{[L:K]_2}=\Q_{[L:K]_2}^{\tau}$, where $\tau(\zeta_{[L:K]_2})=\zeta_{[L:K]_2}^{(-1+r_2)^t}$. However, the discussion on the cyclic $p$-subgroups of $\U_{m}$ with $m$ a $p$-th power in \Cref{SectionNumberTheory} shows that $\GEN{(-1+r_2)^t}_{[L:K]_2}=\GEN{1+u}_{[L:K]_2}$ with $u$ a multiple of $4$ dividing $[L:K]_2$. Moreover, $o_{[L:K]_2}(1+u)=\frac{o_{[L:K]_2}(-1+r_2)}{t_2}$. Then, using \Cref{PropEse}.\eqref{vpRm-1}, we derive that $\GEN{(-1+r_2)^t}_{[L:K]_2}=\GEN{1+r_2t_2}_{[L:K]_2}$. Therefore $F\cap \Q_{[L:K]_2}=\Q_{r_2t_2}$, again a contradiction with the first paragraph of the proof.

The last equality of the claim follows by the same arguments as in the last paragraph of the proof of Claim~\ref{Claim1}.
\end{proof}

As in the previous cases we take $S=\Q Ge(G,L,K_0)\cong \Q He(H,L,K)$ for some strong Shoda pair $(L,K)$ of $H$.
By Claim~\eqref{Knormalparnew}, $S\cong (\mathbb{Q}_{[L:K]}*\GEN{\rho})$ and if $Z(S)\cong F$, then
$\mathbb{Q}_{m_{\pi'}m'_2}^{\gamma^G}=F=
\mathbb{Q}_{[L:K]}^{\gamma^H}$.
As $L=\GEN{a,K}$ and $a_{\pi'}\in K$ we have that $[L:K]\mid m_{\pi'}m_2$. Combining this with $m'_2\in \left\{m_2,\frac{m_2}{2}\right\}$ and $\varphi([L:K]) = \varphi(m_{\pi'}m'_2)$, it is easy to see that either $[L:K]=m_{\pi'}m'_2$ or $m'_2=\frac{m_2}{2}$ and $[L:K]=\frac{m_{\pi'}}{3}m_2$.
In the first case, $(\Delta^G)_2=\GEN{(\sigma^G)_2}=\GEN{(\sigma^H)_2}=(\Delta^H)_2$, as desired.
In the second case, $\Res_{\frac{m_{\pi'}}{3}m'_2}(\sigma^G) = \Res_{\frac{m_{\pi'}}{3}m'_2}(\sigma^H)$ and $\sigma^G(\zeta_3)=\zeta_3^{-1}$. This implies again that $(\Delta_2)^G=\Res_{m_{\pi'}m'_2}(\sigma^G) = \Res_{m_{\pi'}m'_2}(\sigma^H) = (\Delta_2)^H$, as desired.

This finishes the proof of \Cref{DeltaDetermined} and completes the proof of \Cref{Main}.
\end{proof}

\bibliographystyle{amsalpha}
\bibliography{ReferencesMSC}	

\end{document}